\def\thm@space@setup{%
  \thm@preskip=\parskip \thm@postskip=0pt
}
\begin{document}

\theoremstyle{plain}

\newtheorem{thm}{Theorem}[section]

\newtheorem{lem}[thm]{Lemma}
\newtheorem{Problem B}[thm]{Problem B}

\newtheorem{pro}[thm]{Proposition}
\newtheorem{conj}[thm]{Conjecture}
\newtheorem{cor}[thm]{Corollary}
\newtheorem{que}[thm]{Question}

\theoremstyle{definition}
\newtheorem{rem}[thm]{Remark}
\newtheorem{defi}[thm]{Definition}
\newtheorem{hyp}[thm]{Hypothesis}

\theoremstyle{plain}
\newtheorem*{thmA}{Theorem A}
\newtheorem*{thmB}{Theorem B}
\newtheorem*{corB}{Corollary B}
\newtheorem*{thmC}{Theorem C}
\newtheorem*{thmD}{Theorem D}
\newtheorem*{thmE}{Theorem E}
 
\newtheorem*{thmAcl}{Main Theorem$^{*}$}
\newtheorem*{thmBcl}{Theorem B$^{*}$}
\newcommand{\dd}{\mathrm{d}}

\theoremstyle{plain}
\newtheorem{theoA}{Theorem}

\theoremstyle{plain}
\newtheorem{conjA}[theoA]{Conjecture}

\theoremstyle{plain}
\newtheorem{condA}[theoA]{Condition}

\theoremstyle{plain}
\newtheorem{paraA}[theoA]{Parametrisation}

\theoremstyle{plain}
\newtheorem{corA}[theoA]{Corollary}

\renewcommand{\thetheoA}{\Alph{theoA}}

\renewcommand{\thecorA}{\Alph{corA}}

\newcommand{\wh}[1]{\widehat{#1}} 
\newcommand{\miquelcomment}{\textcolor{blue}}
\newcommand{\ncomment}{\textcolor{magenta}}

\newcommand{\Maxn}{\operatorname{Max_{\textbf{N}}}}
\newcommand{\Syl}{\operatorname{Syl}}
\newcommand{\Lin}{\operatorname{Lin}}
\newcommand{\U}{\mathbf{U}}
\newcommand{\nav}{\mathrm{Nav}}
\newcommand{\R}{\mathbf{R}}
\newcommand{\dl}{\operatorname{dl}}
\newcommand{\Con}{\operatorname{Con}}
\newcommand{\rdz}{\operatorname{rdz}}
\newcommand{\rdzo}{\operatorname{rdz}^{\circ}}
\newcommand{\cl}{\operatorname{cl}}
\newcommand{\Stab}{\operatorname{Stab}}
\newcommand{\Aut}{\operatorname{Aut}}
\newcommand{\Ker}{\operatorname{Ker}}
\newcommand{\InnDiag}{\operatorname{InnDiag}}
\newcommand{\fl}{\operatorname{fl}}
\newcommand{\irr}{\operatorname{Irr}}
\newcommand{\ibr}{\operatorname{IBr}}
\newcommand{\FF}{\mathbb{F}}
\newcommand{\CL}{\mathfrak{Cl}}
\newcommand{\EE}{\mathbb{E}}
\newcommand{\Alp}{\mathrm{Alp}}
\newcommand{\Alpr}{\mathrm{Alp_r}}
\newcommand{\normal}{\unhld}
\newcommand{\sn}{\normal\normal}
\newcommand{\Bl}{\mathrm{Bl}}
\newcommand{\NN}{\mathbb{N}}

\newcommand{\N}{\mathbf{N}}
\newcommand{\bfC}{\mathbf{C}}
\newcommand{\bfO}{\mathbf{O}}
\newcommand{\bfF}{\mathbf{F}}
\newcommand{\Irr}{\mathrm{Irr}}
\def\GGG{{\mathcal G}}
\def\HHH{{\mathcal H}}
\def\HH{{\mathcal H}}
\def\irra#1#2{{\rm Irr}_{#1}(#2)}

\renewcommand{\labelenumi}{\upshape (\roman{enumi})}

\newcommand{\PSL}{\operatorname{PSL}}
\newcommand{\PSU}{\operatorname{PSU}}
\newcommand{\alt}{\operatorname{Alt}}

\providecommand{\V}{\mathrm{V}}
\providecommand{\E}{\mathrm{E}}
\providecommand{\ir}{\mathrm{Irm_{rv}}}
\providecommand{\Irrr}{\mathrm{Irm_{rv}}}
\providecommand{\re}{\mathrm{Re}}

\numberwithin{equation}{section}
\def\irrp#1{{\rm Irr}_{p'}(#1)}

\def\ibrrp#1{{\rm IBr}_{\Bbb R, p'}(#1)}
\def\C{{\mathbb C}}

\def\isoc{{\succeq_c}}

\def\isob{{\succeq_b}}

\newcommand{\wt}[1]{\widetilde{#1}} 

\def\Rad{{\rm Rad}}
\def\Rado{{\rm Rad}^{\circ}}

\def\o{{\bf O}}
\def\c{{\bf C}}
\def\n{{\bf N}}
\def\z{{\bf Z}}
\def\F{{\bf F}}
\def\P{{\mathcal{P}}}
\def\Q{{\mathcal{Q}}}
\def\R{{\mathcal{R}}}
\def\W{{\mathcal{W}}}
\def\D{{\mathcal{D}}}
\def\Wr{{\mathcal{W}_{\rm r}}}

\def\irr#1{{\rm Irr}(#1)}
\def\irrp#1{{\rm Irr}_{p^\prime}(#1)}
\def\irrq#1{{\rm Irr}_{q^\prime}(#1)}
\def \aut#1{{\rm Aut}(#1)}
\def\cent#1#2{{\bf C}_{#1}(#2)}
\def\norm#1#2{{\bf N}_{#1}(#2)}
\def\zent#1{{\bf Z}(#1)}
\def\syl#1#2{{\rm Syl}_#1(#2)}
\def\normal{\unlhd}
\def\oh#1#2{{\bf O}_{#1}(#2)}
\def\Oh#1#2{{\bf O}^{#1}(#2)}
\def\det#1{{\rm det}(#1)}
\def\gal#1{{\rm Gal}(#1)}
\def\ker#1{{\rm ker}(#1)}
\def\normalm#1#2{{\bf N}_{#1}(#2)}
\def\alt#1{{\rm Alt}(#1)}
\def\iitem#1{\goodbreak\par\noindent{\bf #1}}
   \def \mod#1{\, {\rm mod} \, #1 \, }
\def\sbs{\subseteq}

\def\gc{{\bf GC}}
\def\ngc{{non-{\bf GC}}}
\def\ngcs{{non-{\bf GC}$^*$}}
\newcommand{\notd}{{\!\not{|}}}

\newcommand{\Z}{\mathbf{Z}}

\newcommand{\bG}{\mathbf{G}}
\newcommand{\bL}{\mathbf{L}}
\newcommand{\bH}{\mathbf{H}}
\newcommand{\bM}{\mathbf{M}}

\newcommand{\ty}[1]{\mathsf{#1}}

\newcommand{\cE}{\mathscr{E}}

\newcommand{\Out}{{\mathrm {Out}}}
\newcommand{\Mult}{{\mathrm {Mult}}}
\newcommand{\Inn}{{\mathrm {Inn}}}
\newcommand{\Fong}{{\mathrm{Fong}}}
\newcommand{\IBRL}{{\mathrm {IBr}}_{\ell}}
\newcommand{\IBRP}{{\mathrm {IBr}}_{p}}
\newcommand{\bl}{{\mathrm{bl}}}
\newcommand{\cd}{\mathrm{cd}}
\newcommand{\ord}{{\mathrm {ord}}}
\def\id{\mathop{\mathrm{ id}}\nolimits}
\renewcommand{\Im}{{\mathrm {Im}}}
\newcommand{\Ind}{{\mathrm {Ind}}}
\newcommand{\diag}{{\mathrm {diag}}}
\newcommand{\soc}{{\mathrm {soc}}}
\newcommand{\End}{{\mathrm {End}}}
\newcommand{\sol}{{\mathrm {sol}}}
\newcommand{\Hom}{{\mathrm {Hom}}}
\newcommand{\Mor}{{\mathrm {Mor}}}
\newcommand{\Mat}{{\mathrm {Mat}}}
\def\rank{\mathop{\mathrm{ rank}}\nolimits}
\newcommand{\Tr}{{\mathrm {Tr}}}
\newcommand{\tr}{{\mathrm {tr}}}
\newcommand{\Gal}{{\rm Gal}}
\newcommand{\Spec}{{\mathrm {Spec}}}
\newcommand{\ad}{{\mathrm {ad}}}
\newcommand{\Sym}{{\mathrm {Sym}}}
\newcommand{\Char}{{\mathrm {Char}}}
\newcommand{\pr}{{\mathrm {pr}}}
\newcommand{\rad}{{\mathrm {rad}}}
\newcommand{\abel}{{\mathrm {abel}}}
\newcommand{\PGL}{{\mathrm {PGL}}}
\newcommand{\PCSp}{{\mathrm {PCSp}}}
\newcommand{\PGU}{{\mathrm {PGU}}}
\newcommand{\codim}{{\mathrm {codim}}}
\newcommand{\ind}{{\mathrm {ind}}}
\newcommand{\Res}{{\mathrm {Res}}}
\newcommand{\Lie}{{\mathrm {Lie}}}
\newcommand{\Ext}{{\mathrm {Ext}}}
\newcommand{\EBr}{{\mathrm {EBr}}}
\newcommand{\Alt}{{\mathrm {Alt}}}
\newcommand{\AAA}{{\sf A}}
\newcommand{\SSS}{{\sf S}}
\newcommand{\DDD}{{\sf D}}
\newcommand{\QQQ}{{\sf Q}}
\newcommand{\CCC}{{\sf C}}
\newcommand{\SL}{{\mathrm {SL}}}
\newcommand{\Sp}{{\mathrm {Sp}}}
\newcommand{\PSp}{{\mathrm {PSp}}}
\newcommand{\SU}{{\mathrm {SU}}}
\newcommand{\GL}{{\mathrm {GL}}}
\newcommand{\GU}{{\mathrm {GU}}}
\newcommand{\Br}{{\mathrm{Br}}}
\newcommand{\Spin}{{\mathrm {Spin}}}
\newcommand{\CC}{{\mathbb C}}
\newcommand{\CB}{{\mathbf C}}
\newcommand{\RR}{{\mathbb R}}
\newcommand{\QQ}{{\mathbb Q}}
\newcommand{\ZZ}{{\mathbb Z}}
\newcommand{\bfN}{{\mathbf N}}
\newcommand{\bfZ}{{\mathbf Z}}
\newcommand{\PP}{{\mathbb P}}
\newcommand{\cG}{{\mathcal G}}
\newcommand{\cH}{{\mathcal H}}
\newcommand{\cQ}{{\mathcal Q}}
\newcommand{\GA}{{\mathfrak G}}
\newcommand{\cT}{{\mathcal T}}
\newcommand{\cL}{{\mathcal L}}
\newcommand{\IBr}{\mathrm{IBr}}
\newcommand{\cS}{{\mathcal S}}
\newcommand{\cR}{{\mathcal R}}
\newcommand{\GCD}{\GC^{*}}
\newcommand{\TCD}{\TC^{*}}
\newcommand{\FD}{F^{*}}
\newcommand{\GD}{G^{*}}
\newcommand{\HD}{H^{*}}
\newcommand{\GCF}{\GC^{F}}
\newcommand{\TCF}{\TC^{F}}
\newcommand{\PCF}{\PC^{F}}
\newcommand{\GCDF}{(\GC^{*})^{F^{*}}}
\newcommand{\RGTT}{R^{\GC}_{\TC}(\theta)}
\newcommand{\RGTA}{R^{\GC}_{\TC}(1)}
\newcommand{\Om}{\Omega}
\newcommand{\eps}{\epsilon}
\newcommand{\varep}{\varepsilon}
\newcommand{\dz}{\mathrm{dz}}
\newcommand{\dzo}{\mathrm{dz}^\circ}
\newcommand{\Co}{\mathcal{C}^\circ}
\newcommand{\al}{\alpha}

\newcommand{\chis}{\chi_{s}}
\newcommand{\sigmad}{\sigma^{*}}
\newcommand{\PA}{\boldsymbol{\alpha}}
\newcommand{\gam}{\gamma}
\newcommand{\lam}{\lambda}
\newcommand{\la}{\langle}
\newcommand{\genf}{F^*}
\newcommand{\ra}{\rangle}
\newcommand{\hs}{\hat{s}}
\newcommand{\htt}{\hat{t}}
\newcommand{\tG}{\hat G}
\newcommand{\St}{\mathsf {St}}
\newcommand{\bfs}{\boldsymbol{s}}
\newcommand{\bfl}{\boldsymbol{\lambda}}
\newcommand{\tn}{\hspace{0.5mm}^{t}\hspace*{-0.2mm}}
\newcommand{\ta}{\hspace{0.5mm}^{2}\hspace*{-0.2mm}}
\newcommand{\tb}{\hspace{0.5mm}^{3}\hspace*{-0.2mm}}
\def\skipa{\vspace{-1.5mm} & \vspace{-1.5mm} & \vspace{-1.5mm}\\}
\newcommand{\tw}[1]{{}^#1\!}
\renewcommand{\mod}{\bmod \,}

\let\ti=\times
\let\la=\lambda
\let\eps=\epsilon

\marginparsep-0.5cm

\newcommand{\blocktheorem}[1]{%
  \csletcs{old#1}{#1}% Store \begin
  \csletcs{endold#1}{end#1}% Store \end
  \RenewDocumentEnvironment{#1}{o}
    {\par\addvspace{1.5ex}
     \noindent\begin{minipage}{\textwidth}
     \IfNoValueTF{##1}
       {\csuse{old#1}}
       {\csuse{old#1}[##1]}}
    {\csuse{endold#1}
     \end{minipage}
     \par\addvspace{1.5ex}}
}

\blocktheorem{theoA}
\blocktheorem{conjA}

\makeatletter
\def\blfootnote{\gdef\@thefnmark{}\@footnotetext}
\makeatother

\title{{\bf{\huge Alperin's bound and normal Sylow subgroups}}}

\author{Zhicheng Feng, J. Miquel Mart\'inez, and Damiano Rossi}

\blfootnote{\emph{$2020$ Mathematical Subject Classification:} $20$C$20$ ($20$C$15$, $20$C$33$)
\\
\emph{Key words and phrases:} Alperin's bound, Alperin Weight Conjecture, normal Sylow subgroups, blocks of maximal defect, isomorphisms of character triples.
\\
The first-named author gratefully acknowledges financial support by NSFC (No.12350710787, 12431001) and Startup Foundation of Shenzhen (Y01916102).
The work of the second-named author is funded by the European Union -- Next Generation EU, Missione 4 Componente 1, PRIN 2022-2022PSTWLB -- Group Theory and Applications, CUP B53D23009410006 as well as the Spanish Ministerio de Ciencia e Innovaci\'on (Grant PID2022-137612NB-I00 funded by MCIN/AEI/ 10.13039/501100011033 and “ERDF A way of making Europe”). The third-named author is supported by the Walter Benjamin Programme of the DFG - Project number 525464727.
\\
\date{}
}

\maketitle

\begin{abstract}
Let $G$ be a finite group, $p$ a prime number and $P$ a Sylow $p$-subgroup of $G$. Recently, G. Malle, G. Navarro, and P. H. Tiep conjectured that the number of $p$-Brauer characters of $G$ coincides with that of the normaliser $\n_G(P)$ if and only if $P$ is normal in $G$. We reduce this conjecture to a question about finite simple groups and prove it for the prime $p=2$. As a by-product of our work, we prove a reduction theorem for the blockwise version of Alperin's lower bound on $p$-Brauer characters and prove it for $2$-blocks of maximal defect. This improves recent results obtained by Malle, Navarro, and Tiep.
\end{abstract}

\section{Introduction}

Let $G$ be a finite group, $p$ a prime number, and $P$ a Sylow $p$-subgroup of $G$. According to the It\^o--Michler theorem $p$ does not divide the degree of any irreducible complex character of $G$ if and only if $P$ is abelian and normal in $G$. This result was inspired by Brauer's height zero conjecture, recently proved in \cite{Ruh25,Mal-Nav-Sch-Tie24}, which provides a representation theoretic characterisation of the commutativity of defect $p$-subgroups, and in particular of Sylow $p$-subgroups.

The normality condition of the Sylow $p$-subgroup $P$ in the It\^o--Michler theorem has also been isolated \cite{Mic86, Mal-Nav12, Gia-Law-Lon-Val22, Mor-Sch24}. Interestingly, all of these characterisations are obtained by imposing various restrictions on the divisibility of character degrees, in the spirit of the It\^o--Michler theorem.
%Interestingly, all these characterisations share a common theme: the condition is still that the prime $p$ does not divide the degree of \emph{certain} irreducible (ordinary or Brauer) characters.
A characterisation of a completely different nature has recently been conjectured by G. Malle, G. Navarro, and P. H. Tiep \cite{Mal-Nav-Tie23}. Below, we denote by $\IBr(G)$ the set of $p$-Brauer characters of $G$.

\begin{conj}[Malle--Navarro--Tiep]\label{conj:MNT}
Let $G$ be a finite group, $p$ a prime number, and $P$ a Sylow $p$-subgroup of $G$. Then $P$ is normal in $G$ if and only if $|\IBr(G)|=|\IBr(\norm G P)|$.
\end{conj}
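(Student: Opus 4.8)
The forward implication is immediate, since $P\trianglelefteq G$ forces $\norm GP=G$. So the plan is to prove the contrapositive of the converse: if $P$ is \emph{not} normal in $G$, then $|\IBr(G)|>|\IBr(\norm GP)|$. The natural framework is the \emph{Alperin lower bound} $|\IBr(G)|\ge |\IBr(\norm GP)|$. Since $P$ is a normal Sylow $p$-subgroup of $\norm GP$, the quotient $\norm GP/P$ is a $p'$-group and $\IBr(\norm GP)=\Irr(\norm GP/P)$, so $|\IBr(\norm GP)|$ is exactly the number of $p$-weights of $G$ whose first component is $G$-conjugate to $P$; equivalently, by Brauer's first main theorem it equals $\sum_b l(b)$, the sum over the Brauer correspondents $b$ in $\norm GP$ of the maximal-defect blocks of $G$. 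The bound is thus a consequence of the (blockwise) Alperin Weight Conjecture, and its blockwise form $l(B)\ge l(b)$ — which we establish for $2$-blocks of maximal defect — yields it after summing over blocks. Conjecture~\ref{conj:MNT} is then the assertion that this inequality is tight precisely when $P$ is normal.

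I would prove the converse by induction on $|G|$, fixing a minimal counterexample $G$ with $|\IBr(G)|=|\IBr(\norm GP)|$ and $P$ not normal. First reduce to $\oh pG=1$: every $\varphi\in\IBr(G)$ has $\oh pG$ in its kernel, so $\IBr(G)=\IBr(G/\oh pG)$, and since $\oh pG\le P$ one has $\norm{G/\oh pG}{P/\oh pG}=\norm GP/\oh pG$, with $P/\oh pG$ normal in $G/\oh pG$ iff $P$ is normal in $G$; hence $\oh pG\neq 1$ would produce a smaller counterexample, so $\oh pG=1$ and $P\neq 1$. Equality in the Alperin lower bound now forces tightness in each summand, i.e. every block of $G$ has maximal defect and satisfies $l(B)=l(b)$ for its Brauer correspondent $b$. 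Next, fix a minimal normal subgroup $N$ of $G$ — either a $p'$-group or a direct power of a non-abelian simple group of order divisible by $p$ — and compare $\IBr(G)$ with $\IBr(N)$ and $\IBr(\norm GP)$ with $\IBr(\norm N{P\cap N})$ through Clifford theory over the $G$-orbits on $\IBr(N)$ and their stabilisers, descending to proper sections by means of \emph{isomorphisms of modular character triples} compatible with the local subgroup $\norm GP$. The conclusion is that a counterexample forces the failure of an inductive statement about finite simple groups, which we isolate as Problem~B: informally, that for a non-abelian simple group $S$ with $p\mid|S|$ the Brauer characters of $S$ are spread out over enough $p$-local data — in an $\Aut(S)$-equivariant, block-compatible way — to preclude any cancellation against $\IBr$ of the relevant normalisers in a larger group assembled from copies of $S$.

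For $p=2$ it then remains to verify Problem~B for every non-abelian simple group $S$, all of which have even order. The alternating and sporadic groups are a finite check from the known $2$-modular decomposition matrices and block distributions. For the groups of Lie type I would separate the defining-characteristic case, where $\IBr(S)$ and the $2$-weights are controlled via Steinberg's tensor product theorem, from the non-defining case, treated through Deligne--Lusztig theory, a Jordan decomposition of Brauer characters, and the structure of $2$-blocks of maximal defect; the same simple-group input simultaneously settles the blockwise Alperin lower bound for $2$-blocks of maximal defect.

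The main obstacle is the reduction itself. Brauer characters interact far more coarsely with normal subgroups than ordinary characters do, and Brauer block correspondence does not descend cleanly to quotients, so keeping the comparison $|\IBr(G)|$ versus $|\IBr(\norm GP)|$ under control while passing to proper sections requires a delicately arranged modular character-triple formalism that remembers the local subgroup; formulating Problem~B so that it is simultaneously genuinely inductive and amenable to a case-by-case proof is the real crux. The second serious difficulty is the Lie-type verification at $p=2$, where $2$ is a bad prime, maximal-defect blocks proliferate, and $\norm GP$ can be intricate.
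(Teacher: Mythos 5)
Your proposal captures the broad architecture of the paper's argument: forward implication trivial, contrapositive by induction on $|G|$, reduction to $\oh pG=1$, descent to normal subgroups via modular character-triple methods, isolation of an inductive simple-group condition, and a case-by-case verification for $p=2$. But two substantive ingredients are missing or misplaced.

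First, the descent step is not plain Clifford theory. Once the induction forces every proper normal subgroup of $G$ to be a $p'$-group (so that $N:=\oh{p'}G$ is the maximal one and $G/N$ is simple of order divisible by $p$), you still need to transport the equality $|\IBr(G)|=|\IBr(\norm GP)|$ to the simple quotient. The paper does this (Proposition~\ref{pro:glauberman}) by fixing $\lambda\in\Irr_P(N)$, invoking the \emph{Glauberman correspondence} between $\Irr_P(N)$ and $\Irr(\cent NP)$, and using a degree-divisibility result for $p$-solvable groups to show $|\IBr(N\norm GP\mid\lambda)|=|\IBr(\norm GP\mid\lambda')|$; taking $\lambda=1_N$ then hands the equality to $G/N$. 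Without this specific extra bijection the $G$-orbit bookkeeping does not close. Second, your observation that equality forces every block to have maximal defect is correct \emph{given} the blockwise lower bound for maximal-defect blocks, but it is only decisive for the simple section $S=G/N$: there it clashes with the fact that every nonabelian simple group has a block of non-maximal defect (Proposition~\ref{pro:alternating} for alternating groups, \cite{Mic86,Gra-Ono96,Wil88} otherwise), which is Theorem~\ref{thm:simple groups for normal sylows}. For the original $G$ the observation is not by itself a contradiction — $\SSS_4$ at $p=2$ has a unique $2$-block, of maximal defect, and a non-normal Sylow $2$-subgroup. Finally, what makes your "tightness in each summand" deduction non-circular is a genuinely separate reduction theorem: the simple-group condition (Conjecture~\ref{conj:Inductive Alperin bound}, a weakening of the inductive blockwise Alperin weight condition stated via $\isob$-isomorphisms) is first propagated to all maximal-defect blocks of all involved groups (Theorem~\ref{thm:Main reduction}, Remark~\ref{rem:maximal defect}), and only then fed into the normal-Sylow argument; your "Problem B" conflates these two layers.
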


Before proceeding further, we remark that the statement of Conjecture \ref{conj:MNT} can be rephrased in purely group theoretic terms. In fact, the number of $p$-Brauer characters of a group coincides with the number of its $p$-regular conjugacy classes. One might wonder whether a completely group theoretic proof of this conjecture can be found. We believe this to be an extremely challenging task. As a matter of fact, we show that Conjecture \ref{conj:MNT} follows from a statement (see Conjecture \ref{conj:Inductive Alperin bound} below) that is closely related to the inductive condition for the blockwise Alperin weight conjecture \cite{Nav-Tie11, Spa13I}, a condition that lies at the heart of modern finite group representation theory. Recall that a group $S$ is said to be \textit{involved} in $G$ if there is a subgroup $H$ of $G$ and a normal subgroup $N$ of $H$ such that $S$ is isomorphic to the quotient $H/N$.

%Our first main result is to give a reduction theorem for this conjecture.

\begin{theoA}
\label{thm:Main Reduction normal Sylow}
%Suppose that Conjecture \ref{conj:Inductive Alperin bound} holds at the prime $p$ for every block of maximal defect of every covering group of any non-abelian simple group of order divisible by $p$ involved in $G$. If $P$ is a Sylow $p$-subgroup of $G$, then $P\unlhd G$ if and only if $|\ibr(G)|=|\ibr(\n_G(P))|$. 
Let $G$ be a finite group, $p$ a prime number, and assume that Conjecture \ref{conj:Inductive Alperin bound} holds at the prime $p$ for every $p$-block of maximal defect of every covering group of any non-abelian finite simple group of order divisible by $p$ involved in $G$. Then the Malle--Navarro--Tiep Conjecture \ref{conj:MNT} holds for the group $G$ with respect to the prime $p$.
\end{theoA}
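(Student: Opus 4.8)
The plan is to prove the contrapositive of the non-trivial implication in Conjecture~\ref{conj:MNT}: \emph{if $P$ is not normal in $G$, then $|\IBr(G)|>|\IBr(\norm GP)|$}. (The other implication is trivial, since $P\trianglelefteq G$ forces $\norm GP=G$.) The engine is block-theoretic. As $P=\oh p{\norm GP}\in\syl p{\norm GP}$, every $p$-block of $\norm GP$ has maximal defect, so block induction $b\mapsto b^{G}$ (Brauer's first main theorem) is a bijection from $\Bl(\norm GP)$ onto the set of $p$-blocks of $G$ of maximal defect; writing $b_{B}$ for the Brauer correspondent in $\norm GP$ of such a $B$, this yields
\[
|\IBr(\norm GP)|=\sum_{b\in\Bl(\norm GP)}\ell(b)=\sum_{B}\ell(b_{B}),
\]
the sum over the maximal-defect $p$-blocks $B$ of $G$, to be compared with $|\IBr(G)|=\sum_{B\in\Bl(G)}\ell(B)$. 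Hence it suffices to build, for each maximal-defect block $B$, an injection $\IBr(b_{B})\hookrightarrow\IBr(B)$, and to show that whenever $P\not\trianglelefteq G$ at least one of these is not onto, or else $G$ has a $p$-block of non-maximal defect.

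Both points will be secured simultaneously by an induction on $|G|$: the inductive step produces such a family of injections carrying the extra structure --- equivariance under the relevant automorphism groups, and compatibility with isomorphisms of $p$-Brauer character triples in the sense of $\isob$ --- that the argument needs, while also controlling their failure to be onto; this is precisely the reduction of the (structured) blockwise Alperin lower bound, and it is here that the hypothesis of the theorem, namely Conjecture~\ref{conj:Inductive Alperin bound} for the maximal-defect $p$-blocks of the covering groups of the non-abelian simple groups of order divisible by $p$ involved in $G$, is used. First, if $\oh pG\neq1$ then, since $\IBr$ is insensitive to the normal $p$-subgroup $\oh pG$, the $p$-blocks, their defect groups, and the relevant local subgroups of $G$ and of $G/\oh pG$ correspond, and $\norm GP/\oh pG=\norm{G/\oh pG}{P/\oh pG}$; thus the whole statement descends to $G/\oh pG$ and we are done by induction. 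So assume $\oh pG=1$ and $p\mid|G|$ (otherwise $P=1\trianglelefteq G$). Picking a minimal normal subgroup $N$ of $G$, we are then in one of two cases: either $N$ is a $p'$-group (elementary abelian, or a direct product of copies of a non-abelian simple $p'$-group), or $N\cong S^{t}$ for a non-abelian simple group $S$ with $p\mid|S|$. In the first case, Clifford theory over $N$ --- a Fong--Reynolds reduction to stabilisers together with the theory of Brauer character triples --- ties the $p$-blocks of $G$ to those of proper sections of $G$, to which the inductive hypothesis applies; here $\norm GP\cap N=\cent NP$, and the $G$-orbits on $\IBr(N)$ invisible to $\norm GP$ are what make the resulting injections miss part of $\IBr(B)$. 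In the second case one passes from $S$ to a suitable covering group $\wh S$ --- the Schur-multiplier obstruction to extending Brauer characters of $S$ is why the hypothesis must be phrased for covering groups rather than for the simple groups themselves --- applies Conjecture~\ref{conj:Inductive Alperin bound} to the maximal-defect blocks of $\wh S$, assembles the resulting equivariant, $\isob$-compatible injections into data for $N$ by a wreath-product argument, and then transports it up the normal series $N\trianglelefteq N\cent GN\trianglelefteq G$ by the central-character-compatible character-triple machinery, matching the global normaliser $\norm GP$ with the local normalisers $\norm GD$ (for $D$ a defect group of a maximal-defect block of $G$) by repeated use of Brauer's first main theorem.

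Granting the outcome of the induction, Conjecture~\ref{conj:MNT} for $G$ follows at once: the injections give $|\IBr(G)|\geq|\IBr(\norm GP)|$, and when $P$ is not normal the induction furnishes a non-maximal-defect block or a non-surjective injection, hence strict inequality. It should be stressed that the structured, non-surjective form of the bound is essential --- one cannot reduce to producing a block of non-maximal defect, because there are groups with $P\not\trianglelefteq G$ all of whose $p$-blocks have maximal defect. The smallest such example is $G=\mathsf{S}_{4}$ with $p=2$: a direct computation of central characters on the two $2$-regular class sums shows that $\mathbb{F}_{2}G$ is a single block, which therefore has maximal defect, while a Sylow $2$-subgroup is not normal; here the strict inequality $|\IBr(\mathsf{S}_{4})|=2>1=|\IBr(\norm{\mathsf{S}_{4}}{P})|$ arises from the principal block, in which $\ell(B_{0})=2>1=\ell(b_{B_{0}})$ because $\norm{\mathsf{S}_{4}}{P}=P\cong D_{8}$ has only the trivial Brauer character.

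The main obstacle, as in every reduction to an inductive condition, is the non-abelian chief-factor case: one must verify that the data provided by Conjecture~\ref{conj:Inductive Alperin bound} for $\wh S$ --- an $\Aut(\wh S)$-equivariant, $\isob$-compatible injection from the Brauer characters of the Brauer correspondent into those of a maximal-defect block --- genuinely survives both the passage to the wreath product and the extension from $N\cent GN$ up to $G$, and, above all, that its non-surjectivity is preserved, since this is what delivers the strict inequality for $G$. Two further delicate points are the bookkeeping of central $p'$-subgroups, which is exactly why one works with covering groups rather than with the simple groups, and the compatibility, under the Clifford theory over $N$, between the global Sylow normaliser $\norm GP$ and the various local normalisers $\norm GD$ attached to the non-principal maximal-defect blocks, which must be arranged so that Brauer's first main theorem can be applied coherently throughout. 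Once these are in place the chain of equalities and inequalities closes, giving Conjecture~\ref{conj:MNT} for $G$ at $p$.
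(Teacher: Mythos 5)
Your high-level strategy — induction on $|G|$, reduction modulo $\oh pG$, case split on a minimal normal subgroup, and reliance on the inductive condition for covering groups of simple groups — is consistent with how one expects a reduction theorem of this sort to go, and your $\mathsf{S}_4$ example correctly pinpoints why "produce a block of non-maximal defect" cannot be the whole story for arbitrary $G$. However, the route you sketch is genuinely different from the paper's, and the hardest part of it is left unaddressed in a way that I think constitutes a real gap rather than a detail.

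Your engine is to build, block-by-block, injections $\IBr(b_B)\hookrightarrow\IBr(B)$ for the maximal-defect blocks $B$ of $G$ and to propagate, through the Clifford/Fong--Reynolds and wreath-product machinery, the information that at least one of them is \emph{not onto}. But non-surjectivity is precisely the kind of information that the character-triple reduction machinery (Theorem \ref{thm:Main reduction}, and the constructions in Lemma \ref{lem:Normal KD} and Proposition \ref{prop:Normal KD}) does not track: these constructions produce injections and verify $\isob$-compatibility, and there is no built-in bookkeeping of the codomain's complement under passage to normal or quotient subgroups. You flag this yourself as "the main obstacle," but offer no mechanism for overcoming it, and I do not see how to make the "propagate non-surjectivity" step work directly. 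This is where your proof as written does not close.

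The paper avoids this entirely and runs a quite different argument. After invoking Theorem \ref{thm:Reduction} and Remark \ref{rem:maximal defect} to get Conjecture \ref{conj:Inductive Alperin bound, blockfree} for all subgroups of $G$, it uses a "go down to normal subgroups" technique (Propositions \ref{pro:injection} and \ref{pro:going to normal}): if $|\IBr(G)|=|\IBr(\norm GP)|$, then for any $N\normal G$ with $Q=P\cap N$ one first shows $|\IBr(G)|=|\IBr(\norm GQ)|$ (using the $\isoc$-compatibility of the injection $\IBr(\norm NQ)\hookrightarrow\IBr(N)$ together with \cite[Theorem~2]{Mal-Nav-Tie23}) and then descends to $|\IBr(N)|=|\IBr(\norm NQ)|$. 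Induction then gives $Q\normal N$, hence $Q\le\oh pG=1$, so every proper normal subgroup of $G$ is a $p'$-group. The Manz--Wolf theorem disposes of the $p$-solvable case; taking $N=\oh{p'}G$ yields a non-abelian simple quotient $G/N$, and Navarro's Glauberman-correspondence argument (Proposition \ref{pro:glauberman}, applied with $\lambda=1_N$) passes the equality to $G/N$. That contradicts Theorem \ref{thm:simple groups for normal sylows}, which — and this is where the block-theoretic form of the hypothesis bites — shows that a simple group satisfying Conjecture \ref{conj:Inductive Alperin bound} and the equality $|\IBr(S)|=|\IBr(\norm SP)|$ would have only blocks of maximal defect, a possibility eliminated via Proposition \ref{pro:alternating}, \cite{Mic86,Gra-Ono96}, \cite{Wil88}, and a finite computation. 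No wreath-product argument and no tracking of non-surjectivity is needed: the strict inequality is detected only once, at the simple-group level, and the descent/ascent is done at the level of cardinalities of $\IBr$ using the structured injections as a black box.

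In short: your outline correctly identifies the inputs (the inductive condition for covering groups, the need for strict inequality at the simple level, the role of $\oh pG$) but proposes to carry a surjectivity-defect through the reduction, which is the step the paper deliberately avoids and which you would need a substantial new idea to implement.
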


We are then left to verify the hypothesis of Theorem \ref{thm:Main Reduction normal Sylow}. By extending the results of \cite{Mal-Nav-Tie23}, we are able to verify Conjecture \ref{conj:Inductive Alperin bound} for $2$-blocks of maximal defect of quasi-simple groups (see Theorem \ref{thm:inductive alperin bound for p=2, max defect}), and hence to obtain the Malle--Navarro--Tiep Conjecture \ref{conj:MNT} for the prime $2$.

\begin{theoA}\label{thm:Main, p=2}
The Malle--Navarro--Tiep Conjecture \ref{conj:MNT} holds for all finite groups with respect to the prime $p=2$.
\end{theoA}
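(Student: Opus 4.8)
The plan is to deduce Theorem~\ref{thm:Main, p=2} from the reduction theorem, Theorem~\ref{thm:Main Reduction normal Sylow}, together with a verification of the relevant inductive condition at the prime $2$. By Theorem~\ref{thm:Main Reduction normal Sylow}, in order to establish the Malle--Navarro--Tiep Conjecture~\ref{conj:MNT} for a given finite group $G$ at the prime $p=2$, it suffices to check that Conjecture~\ref{conj:Inductive Alperin bound} holds at $p=2$ for every $2$-block of maximal defect of every covering group of every non-abelian finite simple group of even order involved in $G$. By the Feit--Thompson theorem every non-abelian finite simple group has even order, so this last hypothesis is vacuously about all non-abelian simple groups; and since we wish to conclude Conjecture~\ref{conj:MNT} for all finite groups $G$, we are reduced to proving Conjecture~\ref{conj:Inductive Alperin bound} at $p=2$ for the $2$-blocks of maximal defect of all covering groups of all non-abelian finite simple groups. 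This is exactly Theorem~\ref{thm:inductive alperin bound for p=2, max defect}, so once that theorem is established the present statement follows immediately.

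The bulk of the argument therefore consists in proving Theorem~\ref{thm:inductive alperin bound for p=2, max defect}, which I would attack via the classification of finite simple groups, extending the analysis carried out in \cite{Mal-Nav-Tie23}. Concretely, for each quasi-simple group $X$ with $X/\Z(X)$ simple and each $2$-block $B$ of $X$ of maximal defect, one must produce the data required by the inductive condition: a bijection between the set of $2$-Brauer characters of $B$ and the set of weights of $B$ (equivalently, $2$-Brauer characters of the Brauer correspondent in $\norm{X}{P}$, where $P \in \mathrm{Syl}_2(X)$ is the defect group), compatible with the action of $\Out(X)$ and with the isomorphisms of character triples appearing in the inductive blockwise Alperin weight condition. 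Because we only deal with blocks of maximal defect, the defect group is a full Sylow $2$-subgroup and many of the standard complications (non-maximal defect, genuinely non-principal blocks in Lie type) do not arise. The families to treat are the alternating and symmetric groups together with their covering groups; the sporadic groups and the Tits group together with their covering groups; the simple groups of Lie type in defining characteristic $2$; and the simple groups of Lie type in odd defining characteristic. In each case the strategy is to use the known parametrisations of $\IBr$ and of weights, verify $\Out(X)$-equivariance of the chosen bijection, and then check the character-triple requirement, typically by a central-product or stabiliser argument reducing to the relevant extensions inside $X \rtimes \Out(X)$.

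I expect the main obstacle to be the groups of Lie type in odd defining characteristic: there one must control the $2$-local structure (the Sylow $2$-subgroups and their normalisers), describe $\IBr$ of the maximal-defect $2$-blocks together with the corresponding weights, and — most delicately — verify the extendibility and character-triple conditions in the presence of diagonal and field automorphisms and of the (sometimes exceptional) Schur multiplier, since it is precisely here that the isomorphism-of-character-triples hypothesis is hardest to make precise and check. A secondary, more computational difficulty is posed by the exceptional covering groups of the small simple groups, where one resorts to ad hoc verifications aided by known character tables and decomposition matrices. Once Theorem~\ref{thm:inductive alperin bound for p=2, max defect} is in place, feeding it into Theorem~\ref{thm:Main Reduction normal Sylow} yields Conjecture~\ref{conj:MNT} at $p=2$ for every finite group, which is Theorem~\ref{thm:Main, p=2}.
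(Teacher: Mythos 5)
Your proposal is correct and follows essentially the same route as the paper: combine the reduction Theorem~\ref{thm:Main Reduction normal Sylow} (via Theorem~\ref{thm:proof of main, normal sylow}) with the verification of Conjecture~\ref{conj:Inductive Alperin bound} for $2$-blocks of maximal defect of quasi-simple groups, which is Theorem~\ref{thm:inductive alperin bound for p=2, max defect}. Your remark that the Feit--Thompson theorem makes the ``order divisible by $p$'' restriction vacuous at $p=2$, and your case-by-case sketch of Theorem~\ref{thm:inductive alperin bound for p=2, max defect} by CFSG families, both match the structure of the paper's Section~\ref{sec:Verification for p=2}.
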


Before proceeding further, we remark that the statement of Conjecture \ref{conj:Inductive Alperin bound}, considered in the hypothesis of Theorem \ref{thm:Main Reduction normal Sylow}, is in fact a consequence of the inductive condition for the blockwise Alperin weight conjecture mentioned above (see Remark \ref{rmk:BAWC implies iBound} (ii)). The latter, has been studied by several authors (see \cite{Fen19}, \cite{Li21}, \cite{Fen-Mal22}, \cite{Fen-Li-Zha22a}, \cite{Fen-Li-Zha22b}, \cite{Fen-Li-Zha23}, \cite{An-His-Lub24}, and the survey \cite{Fen-Zha22}) and has been verified for several families of finite simple groups. As a corollary of these results, Theorem \ref{thm:Main Reduction normal Sylow} implies that Conjecture \ref{conj:MNT} also holds, at every prime $p$, for groups with abelian Sylow $2$-subgroups or abelian Sylow $3$-subgroups, and in many other cases (see, for instance, the results of \cite[Section 8]{MRR}).

%The condition on simple groups, Conjecture \ref{conj:Inductive Alperin bound} (which we state as a condition on arbitrary finite groups), is in fact \emph{implied} by the inductive blockwise Alperin weight conjecture, and in particular it is known to hold for several families of simple groups (see \cite{Fen-Zha22} and the references therein). In particular, Theorem \ref{thm:Main Reduction normal Sylow} implies that Conjecture \ref{conj:MNT} also holds at every prime for groups with abelian Sylow $2$-subgroups, among other cases (see also \cite[Section 8]{MRR}). 

In the remaining part of this introduction we explain how Theorem \ref{thm:Main Reduction normal Sylow} is obtained and describe some by-products of independent interest. The proof of Theorem \ref{thm:Main Reduction normal Sylow} can roughly be divided into two parts. In the first part, we prove a reduction theorem for Conjecture \ref{conj:Inductive Alperin bound} to finite simple groups. The statement of Conjecture \ref{conj:Inductive Alperin bound} provides an inductive condition for the blockwise version of Alperin's lower bound \cite[Consequence 1]{Alp87} (i.e., that $|\IBr(B)|\geq|\IBr(b)|$ if $b$ is the Brauer correspondent block of $B$), adapted to arbitrary finite groups. The next result is in line with a principle advertised by Dade in the 1990s and pursued in \cite{Nav-Spa14I}, \cite{Ros-iMcK}, \cite{MRR}, and \cite{Ros-CTC}: namely, Dade's principle asserts that the local-global counting conjectures should admit a refinement that can be proved to hold for arbitrary finite groups once verified for (quasi-)simple groups. In the case of the McKay conjecture, considered in \cite{Ros-iMcK}, this idea plays a crucial role in the final proof obtained by Cabanes and Sp\"ath \cite{CS}. We can state our reduction theorem as follows.

\begin{theoA}
\label{thm:Main reduction}
Let $G$ be a finite group, $p$ a prime number, and assume that Conjecture \ref{conj:Inductive Alperin bound} holds at the prime $p$ for every covering group of any non-abelian finite simple group of order divisible by $p$ involved in $G$. Then Conjecture \ref{conj:Inductive Alperin bound} holds for $G$ at the prime $p$.
\end{theoA}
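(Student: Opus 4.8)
The plan is to adapt the now-standard machinery for reducing local--global counting statements to (quasi-)simple groups --- in the refined, Dade-inspired form of \cite{Nav-Spa14I}, \cite{MRR} and \cite{Ros-CTC}, and used in the McKay case in \cite{CS} --- so that a $p$-block $B$ of $G$, a defect group $D$ of $B$, and the Brauer correspondent block $b$ of $B$ in $\norm{G}{D}$ are all carried through the induction. I would induct on $|G|$, proving for $G$ the full datum demanded by Conjecture \ref{conj:Inductive Alperin bound} for every $p$-block. If $G$ has no non-abelian composition factor of order divisible by $p$ --- in particular if $G$ is $p$-solvable --- the datum can be exhibited directly: there $|\IBr(B)|=|\IBr(b)|$ is classical, and the required $\isob$-isomorphisms of character triples, together with their equivariance, are produced by elementary Clifford theory over $\oh{p'}{G}$ and $\oh{p}{G}$. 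Otherwise I fix a minimal normal subgroup $N$ of $G$ and split into cases according to its structure, noting throughout that every proper section and every central extension of a section of $G$ that arises involves only non-abelian simple groups that are themselves involved in $G$, so that the hypothesis of the theorem stays available to the inductive hypothesis.

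Several choices of $N$ are routine. If $N\leq\zent{G}$ is a $p'$-group, then Conjecture \ref{conj:Inductive Alperin bound} and the relation $\isob$ are insensitive to central $p'$-extensions, so one replaces $G$ by $G/N$ and applies induction. If $N\leq\oh{p}{G}$, then $N\leq D$, one has $\norm{G/N}{D/N}=\norm{G}{D}/N$, and $\IBr(G)=\IBr(G/N)$, $\IBr(\norm{G}{D})=\IBr(\norm{G}{D}/N)$, with these identifications compatible with the Brauer correspondence and with $\isob$; so again one descends to $G/N$. If $N$ is a non-central elementary abelian $p'$-group, pick a linear character $\lambda\in\Irr(N)$ (equivalently, a $p$-block of $N$, since $N$ is an abelian $p'$-group) covered by $B$ and let $T=G_\lambda$ be its inertia group. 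When $T<G$, the Fong--Reynolds correspondence gives bijections, preserving defect groups and $|\IBr(-)|$, between the blocks of $G$ covering the $G$-orbit of $\lambda$ and the blocks of $T$ covering $\lambda$, compatibly with the Brauer correspondents in $\norm{T}{D}=T\cap\norm{G}{D}$ and with $\isob$; the inductive hypothesis applied to $T$ then gives what is needed. When $\lambda$ is $G$-invariant, the character triple $(G,N,\lambda)$ is isomorphic to a triple $(\wt{G},\wt{N},\wt{\lambda})$ with $\wt{N}$ cyclic central in $\wt{G}$ and $\wt{G}/\wt{N}\cong G/N$, so $|\wt{G}|\leq|G|$ with equality only when $N$ is cyclic; in that remaining case $\cent{G}{N}\normal G$ has index dividing $|\Aut(N)|$ and one runs the normal-subgroup argument below with $\cent{G}{N}$ in place of $N$.

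The substantial case is $N=S_1\times\cdots\times S_n$ with the $S_i$ non-abelian simple and transitively permuted by $G$. I would first promote the inductive condition from the simple group $S\cong S_1$ --- available by hypothesis for every covering group of $S$ and every $p$-block --- to the normal subgroup $N$: Conjecture \ref{conj:Inductive Alperin bound} and its equivariance are stable under direct products, and an $\Aut$-equivariant version over $N$, recorded inside the wreath-type overgroup attached to the action of $\norm{G}{S_1}$ on $S_1$, then follows by the same $S^n$-argument used in the reductions of the McKay and Alperin weight conjectures. I would then pass from $N$ to $G$ through the Clifford-theoretic machinery attached to $\isob$: cover $B$ by a block of $N$, reduce to the stable case via Fong--Reynolds as above, replace the overgroup of $G$ by a suitable one using Sp\"ath's Butterfly theorem so that $\Out$-equivariance becomes usable, and then glue the character-triple isomorphisms supplied by the inductive condition for $N$ with those furnished inductively --- verifying at each step that blocks, defect groups and Brauer correspondents are respected. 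The output is the datum required for $(G,B,b)$.

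The step I expect to be the main obstacle is keeping the global and local analyses synchronised. On the global side one studies $B$ through $N\normal G$; on the local side one must study $b$ through $N\cap\norm{G}{D}\normal\norm{G}{D}$, and there is no a priori reason for the Clifford theory of $B$ over $N$ to mirror that of $b$ over $N\cap\norm{G}{D}$: inertia groups, stabilisers of the relevant block idempotents, and the way the permutation action of $G$ on $\{S_1,\dots,S_n\}$ restricts to $\norm{G}{D}$ can all behave differently. Assembling a single coherent system of $\isob$-isomorphisms of character triples compatible with both Clifford reductions at once, while preserving all equivariance, is where the bulk of the technical work lies; in particular, aligning Brauer's first main correspondence on the two sides requires delicate control of $D\cap N$ and of the defect groups of the covered blocks of $N$ and of $N\cap\norm{G}{D}$.
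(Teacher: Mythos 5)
Your broad strategy is the standard reduction-theorem template, but it diverges from the paper's and leaves two genuine gaps. First, induction on $|G|$ alone does not close. The argument is forced to pass to central $p'$-extensions (the $\widehat A$ of Lemma~\ref{lem:Normal KD}) and to quotients $\overline G$ whose order need not drop; the paper therefore minimises by $|G:\z(G)|$ first and by $|A|$ second, and this finer parameter is essential. Your fallback of replacing $N$ by $\cent{G}{N}$ when the character-triple reduction fails to shrink $|G|$ is not spelled out and does not obviously address this; the whole induction would have to be rebuilt around $|G:\z(G)|$.

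Second---and you concede this in your last paragraph---you do not actually resolve the synchronisation between the global Clifford theory of $B$ over $N$ and the local Clifford theory of $b$ over $N\cap\norm G D$, yet this is the core of the proof rather than a loose end. The paper handles it by restricting to Brauer characters in $\dzo(K)$ (so that passing to $\overline G=\widehat G/K_0$ behaves well at the block level), by invoking Murai's results to track defect groups and Brauer correspondents through domination and Fong--Reynolds, and by proving the structural Proposition~\ref{pro:Non-central defect}, which rules out any non-central $K\normal A$ with $K\cap D\le\z(K)$ via nilpotent-block theory; as a result the central/abelian normal-subgroup cases you treat piecemeal are absorbed in one stroke, and the whole argument is organised around the generalised Fitting subgroup of $G$ rather than a minimal normal subgroup. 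The ascent from the normal subgroup back to $G$ is then isolated in Lemma~\ref{lem:Lifting bijections}. Without analogues of these ingredients your sketch asserts the compatibilities exactly where they need to be proved.
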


We remark here that the proof of Theorem \ref{thm:Main reduction} is compatible with the choice of well-behaved classes of $p$-blocks. In particular, if we want to obtain Conjecture \ref{conj:Inductive Alperin bound} for blocks of maximal defect, then it suffices to verify it for the same class of blocks for the covering groups of finite simple groups (see Remark \ref{rem:maximal defect}). Thanks to this observation, in the second part of our proof of Theorem \ref{thm:Main Reduction normal Sylow}, we can apply Theorem \ref{thm:Main reduction} to conclude that Conjecture \ref{conj:Inductive Alperin bound} holds for blocks of maximal defect of arbitrary finite groups. This, in turns, is what allows us to conclude that groups which satisfy the equality $|\ibr(G)|=|\ibr(\n_G(P))|$ must contain a normal Sylow $p$-subgroup (see Theorem \ref{thm:proof of main, normal sylow}). It is also important to point out that, while the conclusion of Theorem \ref{thm:Main Reduction normal Sylow} does not contain any information about $p$-blocks, our proof requires a condition, Conjecture \ref{conj:Inductive Alperin bound}, that actually deals with $p$-blocks (see the proof of Theorem \ref{thm:simple groups for normal sylows}). This is also the reason why, in order to obtain Theorem \ref{thm:Main, p=2}, we need to extend the results of \cite{Mal-Nav-Tie23} and to verify Conjecture \ref{conj:Inductive Alperin bound} for $2$-blocks of maximal defect.

To conclude, observe that Theorem \ref{thm:Main reduction} provides, in particular, a reduction theorem for the blockwise version of Alperin's lower bound. This, together with the verification of Conjecture \ref{conj:Inductive Alperin bound} for $2$-blocks of maximal defect of quasi-simple groups (see Theorem \ref{thm:inductive alperin bound for p=2, max defect}), allows us to prove a blockwise version of the main theorem of \cite{Mal-Nav-Tie23}.

\begin{theoA}\label{thm:Main, blockwise lower bound p=2, max}
Let $G$ be a finite group, $B$ a $2$-block of maximal defect of $G$, and $b$ its Brauer correspondent. Then we have $|\IBr(B)|\geq |\IBr(b)|$.
\end{theoA}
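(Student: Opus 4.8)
The plan is to deduce the inequality directly from Conjecture~\ref{conj:Inductive Alperin bound} for the group $G$ itself, which in turn we establish via the reduction theorem restricted to blocks of maximal defect. First I would invoke Theorem~\ref{thm:inductive alperin bound for p=2, max defect}, which verifies Conjecture~\ref{conj:Inductive Alperin bound} at $p=2$ for every $2$-block of maximal defect of every covering group of every non-abelian finite simple group. Since $p=2$, the ``order divisible by $p$'' clause in the hypothesis of Theorem~\ref{thm:Main reduction} imposes no restriction, as every non-abelian finite simple group has even order by the Odd Order Theorem; thus the quasi-simple input required by the reduction is exactly what Theorem~\ref{thm:inductive alperin bound for p=2, max defect} supplies.

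Next I would feed this into Theorem~\ref{thm:Main reduction}. By Remark~\ref{rem:maximal defect}, the reduction argument is compatible with the class of maximal-defect blocks: once Conjecture~\ref{conj:Inductive Alperin bound} is known at $p=2$ for every $2$-block of maximal defect of every covering group of every non-abelian simple group involved in $G$, it follows for every $2$-block of maximal defect of $G$, and in particular for the given block $B$. It then remains to unwind the conclusion. Since Conjecture~\ref{conj:Inductive Alperin bound} is an inductive strengthening of Alperin's lower bound adapted to arbitrary finite groups, applying it to $G$ (with the ambient normal subgroup taken to be trivial, equivalently the whole group) yields $|\IBr(B)|\geq|\IBr(b)|$ for the Brauer correspondent $b$ of $B$; cf.\ Remark~\ref{rmk:BAWC implies iBound}. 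If a Sylow $2$-subgroup $P$ of $G$ happens to be normal in $G$ then $N_G(P)=G$, so $B=b$ and the inequality is trivial; this edge case is subsumed by the argument but worth noting explicitly.

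As for where the difficulty lies: in this proof it is essentially nil, since all the substance has already been front-loaded into Theorem~\ref{thm:inductive alperin bound for p=2, max defect} (the CFSG-based verification extending \cite{Mal-Nav-Tie23}) and Theorem~\ref{thm:Main reduction} (the reduction machinery). The one genuine point of care is the defect bookkeeping inside the reduction of Theorem~\ref{thm:Main reduction}: one must be sure that the inductive passage to sections and to central extensions of $G$ never escapes the class of maximal-defect blocks, so that the only quasi-simple input ever needed is the one that Theorem~\ref{thm:inductive alperin bound for p=2, max defect} provides. That this holds is precisely the content of Remark~\ref{rem:maximal defect}, and modulo that observation the statement is a formal combination of the cited results.
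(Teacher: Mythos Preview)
Your proposal is correct and follows the paper's own proof: invoke Theorem~\ref{thm:inductive alperin bound for p=2, max defect} for the quasi-simple input, feed it through Theorem~\ref{thm:Main reduction} via Remark~\ref{rem:maximal defect}, and read off the injection $\IBr(b)\hookrightarrow\IBr(B)$. One minor slip worth flagging: your citation of Remark~\ref{rmk:BAWC implies iBound} is misplaced (that remark concerns the implication from the inductive BAW condition \emph{to} Conjecture~\ref{conj:Inductive Alperin bound}); the observation that actually lets you descend from the intermediate subgroup $M$ all the way to $\n_G(D)$, so that $b$ is genuinely the Brauer correspondent in the normaliser, is Remark~\ref{rmk:Removing intermediate group}.
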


The paper is structured as follows. To start, we prove Theorem \ref{thm:Main reduction} in Section \ref{sec:Reduction}. In Section \ref{sec:Reduction, blockfree}, we introduce a block-free version  of Conjecture \ref{conj:Inductive Alperin bound} (which is just the \emph{Sylow-AWC} condition from \cite{Mal-Nav-Tie23}), explain how to obtain an analogous reduction theorem and how Conjecture \ref{conj:Inductive Alperin bound} for the blocks of maximal defect implies its block-free version. Section \ref{sec:Simple groups} contains a necessary result on simple groups for Theorem \ref{thm:Main Reduction normal Sylow}. Finally, Theorem \ref{thm:Main Reduction normal Sylow} is proved in Section \ref{sec:Reduction, normal Sylow}, and Conjecture \ref{conj:Inductive Alperin bound} is checked for every $2$-block of maximal defect of every quasi-simple group in Section \ref{sec:Verification for p=2}, finishing the proof of Theorems \ref{thm:Main, p=2} and Theorem \ref{thm:Main, blockwise lower bound p=2, max}.

\section{Alperin's lower bound and proof of Theorem \ref{thm:Main reduction}}
\label{sec:Reduction}

We fix now and throughout the paper a prime number $p$. We use standard notation from character theory and the theory of $p$-blocks following \cite{Isa76} (see also \cite{Nav18}) and \cite{Nav98}. 

Let $G$ be a finite group, $B$ a block of $G$ with defect subgroup $D$, and $b$ its Brauer correspondent in $\n_G(D)$. Alperin's lower bound, introduced in \cite[Consequence 1]{Alp87} as a consequence of Alperin's weight conjecture, posits the inquality $|\ibr(b)|\leq |\ibr(B)|$. A block-free version of this bound has recently been studied in \cite{Mal-Nav-Tie23}. In particular, in \cite[Theorem 2]{Mal-Nav-Tie23}, the authors prove a reduction theorem of the block-free version of Alperin's lower bound assuming a condition on finite simple groups called \textit{Sylow-AWC} (see \cite[Section 2]{Mal-Nav-Tie23}).

We now state a blockwise version of the Sylow-AWC condition which we restate using the language of isomorphisms of character triples. Throughout this paper we use the notion of central and block isomorphisms of character triples, denoted by $\isoc$ and $\isob$ respectively, as defined in \cite[Section 3]{MRR} (see also \cite{Spa-Val}). Both $\isoc$ and $\isob$ are order relations on modular character triples that were inspired by their ordinary analogues first introduced in \cite{Nav-Spa14I}. We refer the reader to the survey \cite{Spa17I} for further details on the theory of isomorphisms of character triples.

\begin{conj}
\label{conj:Inductive Alperin bound}
Let $G\unlhd A$ be finite groups and consider a block $B$ of $G$ with defect group $D$. There is an $\n_A(D)$-invariant subgroup $\n_G(D)\leq M\leq G$, with $M<G$ whenever $D$ is not normal in $G$, such that, if $b$ denotes the Brauer correspondent of $B$ in $M$, then there exists an $\n_A(D)_B$-invariant injection
\[\Omega:\IBr(b)\hookrightarrow \IBr(B)\]
such that
\[\left(A_\chi,G,\chi\right)\isob\left(M\n_A(D)_\vartheta,M,\vartheta\right)\]
for every $\vartheta\in\ibr(b)$ and $\chi=\Omega(\theta)$
\end{conj}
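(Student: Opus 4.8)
The plan is to approach Conjecture \ref{conj:Inductive Alperin bound} through the reduction-to-simple-groups paradigm, in the spirit of the inductive condition for the blockwise Alperin weight conjecture. No direct attack on the full conjecture seems feasible --- it is in fact a formal consequence of that inductive condition, cf.\ Remark \ref{rmk:BAWC implies iBound} --- so the work splits into two parts: (a) a reduction theorem (this is Theorem \ref{thm:Main reduction}) asserting that it suffices to verify the statement for the covering groups of the non-abelian simple groups involved in $G$; and (b) the verification for those quasi-simple groups, carried out here for $p=2$ and blocks of maximal defect by extending \cite{Mal-Nav-Tie23}.

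For part (a) I would induct on $|G|$, with $G\unlhd A$, a block $B$ of $G$ and a defect group $D$ fixed. If $D\unlhd G$ one takes $M=G$, so assume otherwise. First strip off $\oh{p}{G}$, using that $\IBr$, blocks and defect groups all descend to the quotient by a normal $p$-subgroup contained in every defect group; so we may assume $\oh{p}{G}=1$. Pick a minimal normal subgroup $N$ of $G$ normalised by $A$ (for instance the product of an $A$-orbit of minimal normal subgroups), so that $N$ is either an elementary abelian $p'$-group or a direct product of isomorphic non-abelian simple groups of order divisible by $p$. Choose a block $b_N$ of $N$ covered by $B$, pass to the stabilisers $G_{b_N}\unlhd A_{b_N}$, use Fong--Reynolds to match $B$ with a block of $G_{b_N}$, and then apply the inductive hypothesis to $G_{b_N}/N$ in the abelian case and the hypothesis on quasi-simple groups (for the relevant central extension of $N$ together with its automorphisms realised inside $A_{b_N}$) in the non-abelian case. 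The heart of the argument is then the gluing: the injections and block isomorphisms $\isob$ of character triples obtained at the lower level must be transported through the Clifford correspondence between blocks lying above and below $N$, which rests on transitivity of $\isob$, its compatibility with block induction and central products, and the behaviour of defect groups under these operations, all available from \cite[Section 3]{MRR}. The subgroup $M$ for $G$ is assembled from $\n_G(D)$ and the intermediate subgroups produced at the lower level.

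For part (b), with $p=2$ and maximal defect, $D$ is a full Sylow $2$-subgroup $P$, one takes $M=\n_G(P)$, and one needs an $\n_A(P)_B$-equivariant injection $\IBr(b)\hookrightarrow\IBr(B)$ for $b$ the Brauer correspondent in $\n_G(P)$, together with $(A_\chi,G,\chi)\isob(\n_G(P)\,\n_A(P)_\vartheta,\n_G(P),\vartheta)$. I would start from the \emph{Sylow-AWC} injections of \cite{Mal-Nav-Tie23} and upgrade them to respect $\isob$, working through the classification: alternating groups and their covers by $2$-core/$2$-quotient combinatorics; sporadic groups by inspection; groups of Lie type in defining characteristic $2$, where there is a single block and the statement collapses to one about $\IBr$ and the Sylow normaliser; and groups of Lie type in odd characteristic, via Deligne--Lusztig theory, Jordan decomposition of characters and blocks, and existing control of the $\n_A(P)$-action.

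The main obstacle is twofold. In the reduction, the delicate point is keeping $M<G$ strictly when $D\ntrianglelefteq G$: passing to $G/N$ can make the image of $D$ normal there even when $D$ is not, so that case must be isolated and treated separately through a Frattini-type argument exploiting $\n_G(D)<G$, with constant bookkeeping of stabilisers in $A$ and in $\n_A(D)$ so that equivariance survives. In the verification, the hard case is the groups of Lie type in non-defining characteristic, where the injection, the block isomorphism $\isob$ and the full $\n_A(P)_B$-equivariance must be produced simultaneously while keeping uniform control over the diagonal, field and graph automorphisms comprising $A/G$.
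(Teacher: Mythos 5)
The statement you are addressing is a conjecture, so there is no full proof in the paper; what the paper actually establishes is a reduction theorem (Theorem~\ref{thm:Main reduction} / Theorem~\ref{thm:Reduction}) and a verification of the conjecture for $2$-blocks of maximal defect of quasi-simple groups (Theorem~\ref{thm:inductive alperin bound for p=2, max defect}). You have correctly identified that two-part architecture, and your observation that Conjecture~\ref{conj:Inductive Alperin bound} is a formal consequence of the inductive blockwise Alperin weight condition matches Remark~\ref{rmk:BAWC implies iBound}(ii).

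Your sketch of the reduction, however, diverges from the paper's argument in ways that would create real difficulties. The paper does not induct on $|G|$ but on $|G:\z(G)|$ first and then on $|A|$; this is essential because the main quotient step (Lemma~\ref{lem:Normal KD}) passes from $G$ to $\overline{G}=\widehat{G}/K_0$, where $|\overline{G}|$ need not be smaller than $|G|$ but $|\overline{G}:\z(\overline{G})|$ is. More substantively, the paper does not pick a minimal normal subgroup $N$ of $G$ and split into an abelian $p'$-case and a nonabelian case. Your case analysis is incomplete as stated: with $\o_p(G)=1$, a minimal normal subgroup normalised by $A$ can also be a direct product of nonabelian simple groups of order \emph{coprime} to $p$, which your dichotomy omits. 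The paper avoids this by proving the stronger structural fact $\o_{p'}(G)\le\z(G)$, and the route there is not the Clifford-theoretic quotient-and-lift argument you describe for the abelian case; it is the pair of results Corollary~\ref{cor:Normal KD} (if $K\cap D\le\z(K)$ for a noncentral $K\unlhd A$ then $G=K\n_G(D)$) and Proposition~\ref{pro:Non-central defect}, whose engine is the theory of \emph{nilpotent} blocks: when $K\cap D\le\z(K)$ the block of $KD$ covering $B$ is nilpotent, and the unique Brauer characters in it and its Brauer correspondent are matched directly via \cite[Lemma 8.9]{MRR}, which then yields a contradiction. Your plan to instead pass to $G_{b_N}/N$ and apply induction in the abelian case leaves open precisely the issue you flag yourself (the image of $D$ may become normal modulo $N$), and the lifting of $\isob$-isomorphisms back through the central extension is nontrivial; the paper's workaround is structurally different. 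Once $\o_{p'}(G)\le\z(G)$ is in hand, the paper takes a characteristic perfect subgroup $K$ of $\F^*(G)$ with $K/\z(K)$ a power of a simple group, not a minimal normal subgroup of $G$, and this is where the hypothesis on quasi-simple groups enters.

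On the verification side, your plan envisions producing injections and block isomorphisms from scratch for each family. The paper instead leans heavily on the existing literature for the inductive BAW condition: Lemmas~\ref{lem:case1} and \ref{lem:case2} dispose of many cases by purely group-theoretic criteria on Sylow normalisers and Schur multipliers, the sporadic, alternating, defining-characteristic and most Lie-type cases are handled by citation (\cite{Bre-web}, \cite{Mal14}, \cite{Spa13I}, \cite{Fen-Li-Zha-21}, \cite{Fen-Li-Zha23}, \cite{Fen-Mal22}), and only the types $\ty E_6$ and ${}^2\ty E_6$ require a genuinely new argument (Proposition~\ref{prop:iBAW-E6}), where the point is to show the existing Sylow-AWC injection of \cite{Mal-Nav-Tie23} lands in the right block and to verify the conditions of \cite[Thm.~4.4]{Spa17I}. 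Your outline is not wrong in spirit, but redoing all cases would be far more work than necessary, and your description does not isolate the genuinely new content, which in the paper is concentrated in types $\ty E_6$ and ${}^2\ty E_6$.
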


Before proceeding further we make two remarks about the above condition.

\begin{rem}
\label{rmk:BAWC implies iBound}
\begin{enumerate}
\item First of all, notice that if we replace the isomorphism $\isob$ with its central version $\isoc$ and ask for an injection $\Omega:\ibr(\n_G(P))\hookrightarrow \ibr(G)$, with $P$ a Sylow $p$-subgroup of $G$, then we obtain an equivalent formulation of the Sylow-AWC condition considered in \cite{Mal-Nav-Tie23};
\item Next, as remarked in \cite[Section 2]{Mal-Nav-Tie23} for the Sylow-AWC, we observe that Conjecture \ref{conj:Inductive Alperin bound} is implied by the inductive condition for the blockwise Alperin weight conjecture. To see this, consider the formulation of the latter condition given in \cite[Conjecture 4.1]{MRR}. If $D$ is a defect group of $B$ and $\psi$ is an irreducible Brauer character of the Brauer correspondent $b$ of $B$, then we notice that $(D, \psi)$ is a $p$-weight. If $\Psi$ is the bijection given by \cite[Conjecture 4.1]{MRR} (denoted by $\Omega$ in that paper), then we deduce from the properties of block isomorphisms of modular character triples that the preimage, say $\chi$, of (the $G$-orbit of) $(D,\psi)$ under $\Psi$ belongs to $\ibr(B)$. We then obtain a bijection with the properties required in Conjecture \ref{conj:Inductive Alperin bound} by setting $\Omega(\psi):=\chi$.  
\end{enumerate}
\end{rem}

Our aim is now to prove a reduction theorem for Conjecture \ref{conj:Inductive Alperin bound} to finite quasi-simple groups.

\begin{thm}
\label{thm:Reduction}
Let $G$ be a finite group, $p$ a prime number, and suppose that Conjecture \ref{conj:Inductive Alperin bound} holds at the prime $p$ for all blocks of every covering group of any non-abelian finite simple group of order divisible by $p$ involved in $G$. Then Conjecture \ref{conj:Inductive Alperin bound} holds at the prime $p$ for all blocks of $G$.
\end{thm}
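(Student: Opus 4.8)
The plan is to follow the well-established template for reducing local--global counting statements to (quasi-)simple groups, in the form carried out for the block-free analogue in \cite[Theorem 2]{Mal-Nav-Tie23} and for the inductive condition for the blockwise Alperin weight conjecture in \cite{Spa13I,MRR}, while transporting the block isomorphism $\isob$ of modular character triples through every step. We argue by contradiction: among all triples $(A,G,B)$ with $G\unlhd A$ and $B$ a block of $G$ for which the conclusion of Conjecture~\ref{conj:Inductive Alperin bound} fails, fix one with $|G|$ minimal, and then with $|A|$ minimal. As every group involved in a section of $G$ is involved in $G$, the hypothesis of the theorem is inherited by all the smaller configurations produced below. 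Let $D$ be a defect group of $B$. If $D\unlhd G$, the conclusion holds with $M=G$, $b=B$ and $\Omega=\id$, the required block isomorphism being reflexive (note that $A_\chi=G\,\n_A(D)_\chi$ since defect groups of $B$ are $G$-conjugate); so we may assume $D\not\unlhd G$, whence $D\neq1$ and $G$ is neither a $p$- nor a $p'$-group. Fix a minimal normal subgroup $N$ of $A$ contained in $G$, and distinguish cases according to its structure.

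\emph{$N$ abelian.} Suppose first that $N$ is a $p$-group. Then $N\leq\oh p G\leq D$, every Brauer character of $G$ is trivial on $N$, and $B$ dominates a block $\overline B$ of $\overline G:=G/N$ with defect group $D/N$; one checks that $\n_{\overline A}(D/N)=\n_A(D)/N$, that $\n_{\overline G}(D/N)=\n_G(D)/N$, and, crucially, that $D/N\not\unlhd\overline G$. Minimality of $|G|$ provides data $\overline M,\overline\Omega$ for $\overline B$ in the configuration $\overline G\unlhd\overline A:=A/N$, and inflating along $G\to\overline G$ -- using the compatibility of $\isob$ with inflation through a quotient by a normal $p$-subgroup contained in all the relevant groups \cite{MRR} -- yields the desired $M,\Omega$ for $B$; the properness $M\lneq G$ and the $\n_A(D)$- and $\n_A(D)_B$-equivariance follow from the corresponding properties of $\overline M,\overline\Omega$. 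Suppose instead that $N$ is a $p'$-group and choose a block $B_N$ of $N$ covered by $B$. If $B_N$ is not $G$-invariant, Fong--Reynolds for blocks yields a defect-preserving bijection $\Bl(T\mid B_N)\to\Bl(G\mid B_N)$ with $T=G_{B_N}\lneq G$ and $T\unlhd A_{B_N}$, Clifford theory supplies the matching bijections of Brauer characters, and the behaviour of $\isob$ under Fong--Reynolds together with the Butterfly theorem for block character triples \cite{MRR,Spa-Val} transports the data for $T$, furnished by minimality, back to $G$ (with $M$ induced up from the level of $T$). If $B_N$ is $G$-invariant, the block-compatible Clifford theory of modular character triples \cite{MRR} replaces the triple lying over $B_N$ by an $\isob$-equivalent one whose normal subgroup is central, and this central $p'$-subgroup is then discarded; the net effect is a reduction to a configuration with strictly smaller group, to which minimality again applies.

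\emph{$N=S_1\times\cdots\times S_n$ with the $S_i$ copies of a non-abelian simple group $S$.} If $p\nmid|S|$ then $N$ is a $p'$-group and we are in the previous case, so assume $p\mid|S|$. This is the step where the hypothesis of the theorem is invoked, and the one I expect to be the main obstacle. If $\cent G N\not\leq N$, then $\cent G N$ is a nontrivial normal subgroup of $A$, and one reduces through one of its minimal normal subgroups together with a central-product argument; hence we may assume that the relevant section of $G$ embeds into $\Aut(N)=\Aut(S)\wr\mathfrak S_n$. Fix a block $b_1$ of $S_1$ covered by $B$, so that $B$ covers $b_1\times\cdots\times b_n$ on $N$, and a covering group $\widehat S$ of $S$ onto which $(S_1,b_1)$ lifts. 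From the action of $\n_A(S_1)$ on $S_1$ one constructs a configuration $\widehat S\unlhd\widehat A_1$ to which the hypothesis applies, obtaining a subgroup $M_1$ and an injection $\Omega_1$ satisfying the block character-triple condition at the level of $\widehat S$. One then assembles $M$ from $\n_G(D)$ and the $n$ conjugate copies of $M_1$ in accordance with the wreath-product Clifford theory of $G$ over $N$, and builds $\Omega$ from the $\Omega_1$'s along the $G$-orbit of blocks of $N$. The remaining task -- verifying that $M$ is $\n_A(D)$-invariant with $\n_G(D)\leq M\lneq G$ (the strict inclusion using $D\not\unlhd G$), that the Brauer correspondent $b$ of $B$ in $M$ has $\IBr(b)$ in the expected Clifford-theoretic bijection with the sets of Brauer characters of the Brauer correspondents at the level of the $S_i$, and, most delicately, that the $\widehat S$-level block isomorphisms of character triples combine through the wreath product and the Butterfly theorem into $(A_\chi,G,\chi)\isob(M\,\n_A(D)_\vartheta,M,\vartheta)$ for $\vartheta\in\IBr(b)$ and $\chi=\Omega(\vartheta)$ -- constitutes the technical core of the argument. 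Once this is carried out, the resulting data contradicts the minimality of $(A,G,B)$, which completes the proof.
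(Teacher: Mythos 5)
Your proposal takes a genuinely different route from the paper and leaves the decisive step unproven. The paper does not argue through a minimal normal subgroup of $A$; it fixes a counterexample minimizing first $|G:\z(G)|$ and then $|A|$, shows $\o_p(G)=1$ and (via a nilpotent-block argument, Proposition \ref{pro:Non-central defect}) that $K\cap D\not\leq\z(K)$ for any non-central $K\unlhd A$ contained in $G$, deduces $\o_{p'}(G)\leq\z(G)$, and then works directly with a perfect characteristic subgroup $K$ of $\F^*(G)$ whose central quotient is a power of a simple group. The key technical engine is the restriction to $\dzo(K)$ in Lemma \ref{lem:Normal KD} and Proposition \ref{prop:Normal KD}, together with the block-triple lifting in Lemma \ref{lem:Lifting bijections} and the reduction to a single simple group via Lemma \ref{lem:Isomorphic simple groups, lifting}. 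None of this machinery appears in your sketch.

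Two of your steps are genuine gaps rather than just differences of style. First, your induction parameter is $|G|$, but the standard central-extension trick you invoke for a $G$-invariant $p'$-block of a central minimal normal subgroup replaces $G$ by a group of the same order (a $p'$-central extension of $G/N$), not a strictly smaller one; to make that step close, one must minimize $|G:\z(G)|$ as the paper does. Second, and more seriously, you explicitly defer the semisimple case --- ``the remaining task \dots constitutes the technical core of the argument'' --- and do not carry it out. In the paper, this case is not handled by a bare wreath-product Clifford argument: it requires assembling the $\widehat S$-level injections through the $\dzo(K)$ filter (to control which Brauer characters of $K\n_G(D)$ can lie below Brauer characters of the correspondent block), the Fong--Reynolds reduction in Proposition \ref{prop:Normal KD}, and the lifting Lemma \ref{lem:Lifting bijections}. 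Without those ingredients, the claim that the constituent block isomorphisms ``combine through the wreath product and the Butterfly theorem'' is an assertion, not a proof; it is precisely the part of the theorem that is hard, and your outline does not supply it.
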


We prove Theorem \ref{thm:Reduction} using induction. For this, suppose that Theorem \ref{thm:Reduction} fails to hold for a choice of finite groups $G\unlhd A$ with respect to a block $B$ of $G$. We assume that $G$ and $A$ have been minimised with respect to $|G:\z(G)|$ first and $|A|$ then. With these choices, Conjecture \ref{conj:Inductive Alperin bound} holds for all blocks every $X\unlhd Y$ such that every non-abelian finite simple group of order divisible by $p$ involved in $X$ is also involved in $G$, and either $|X:\z(X)|<|G:\z(G)|$ or $|X:\z(X)|=|G:\z(G)|$ and $|Y|<|A|$.

\begin{rem}
\label{rmk:Removing intermediate group}
If $X\unlhd Y$ are as described above, then more is true. Namely, Conjecture \ref{conj:Inductive Alperin bound} holds for every block $C$ of $X$ with defect group $\delta$ and with respect to $M:=\n_X(\delta)$. For this, we follow the argument used in the proof of \cite[Lemma 4.1]{Ros-iMcK}. In fact, by the previous paragraph we know that Conjecture \ref{conj:Inductive Alperin bound} holds for the block $C$ of $X$ and so we can find an $\n_Y(\delta)$-invariant subgroup $\n_X(\delta)\leq M_1\leq X$, with $M_1<X$ whenever $\delta$ is not normal in $X$, and an $\n_Y(\delta)_C$-equivariant injection $\IBr(C_1)\hookrightarrow\IBr(C)$ inducing block isomorphisms of modular Brauer characters and where $C_1$ is the Brauer correspondent of $C$ in $M_1$. Now, if $\delta$ is normal in $X$, then $M_1=\n_X(\delta)$ and we are done. If $\delta$ is not normal in $X$, then we get $M_1<X$. We can now apply the same argument replacing $X\unlhd Y$ with $M_1\unlhd M_1\n_A(\delta)$ and obtain a subgroup $\n_X(\delta)=\n_{M_1}(\delta)\leq M_2\leq M_1$ and an injection $\IBr(C_2)\hookrightarrow\IBr(C_1)$ inducing block isomorphisms of modular character triples and where $C_2$ is the Brauer correspondent of $C$ in $M_2$. Since $X$ is a finite group, reiterating this process we will find a positive integer $n$ such that $M_n=\n_X(\delta)$, showing that Conjecture \ref{conj:Inductive Alperin bound} holds for $X\unlhd Y$, the block $C$, and with respect to $M=\n_X(\delta)$.
\end{rem}

In what follows we keep $G$, $A$ and $B$ as described above and divide the proof into several steps. Notice furthermore that, arguing as in the proof of \cite[Lemma 6.3 and Corollary 6.4]{MRR}, we must have $\o_p(G)=1$. Observe also that it is no loss of generality to assume that $B$ is $A$-invariant from which we deduce that $A=G\n_A(D)$ using a Frattini argument, where $D$ is a defect group of $B$. Below we will denote by $\dzo(G)$ the set of irreducible Brauer characters $\psi$ of $G$ whose corresponding character $\overline{\psi}$ of the quotient $G/\o_p(G)$ belongs to a block of defect zero.

\begin{lem}
\label{lem:Normal KD}
Let $K$ be a non-central subgroup of $G$ with $K\unlhd A$ and consider the Brauer correspondent $b'$ of $B$ in $K\n_G(D)$. For every $A$-invariant $\vartheta\in\dzo(K)$ there exists an $\n_A(D)_B$-equivariant injection
\[\Upsilon_\vartheta:\IBr(b'\mid \vartheta)\hookrightarrow \IBr(B\mid \vartheta)\]
such that
\[\left(A_\eta, G, \eta\right)\isob\left(K\n_A(D)_\eta, K\n_G(D), \Upsilon_\vartheta(\eta)\right)\]
for every $\eta\in\IBr(b'\mid \vartheta)$.
\end{lem}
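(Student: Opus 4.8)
The plan is to exploit the fact that $\vartheta$ is a \emph{defect-zero} Brauer character of the normal subgroup $K$ in order to remove $K$ from the picture — replacing the triple $(A,K,\vartheta)$ by an isomorphic one in which $K$ has become a central $p'$-subgroup — and then to extract $\Upsilon_\vartheta$ from the inductive hypothesis applied to the resulting, strictly smaller, pair of groups.

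\emph{Preliminary reductions.} Write $b_\vartheta$ for the block of $K$ containing $\vartheta$. Since $K\unlhd A$ forces $\oh p K\leq\oh p G=1$, the hypothesis $\vartheta\in\dzo(K)$ says precisely that $b_\vartheta$ has trivial defect group, and as $\vartheta$ is $A$-invariant so is $b_\vartheta$. If $B$ does not cover $b_\vartheta$, then neither does the Brauer correspondent $b'$ of $B$ in $K\norm G D$, both sets $\IBr(B\mid\vartheta)$ and $\IBr(b'\mid\vartheta)$ are empty, and there is nothing to prove; so I would assume $B$ covers $b_\vartheta$, whence every defect group $D$ of $B$ meets $K$ trivially. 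If $K\norm G D=G$ then $b'=B$ and, using $A=G\norm A D=K\norm A D$ (which holds since $B$ is $A$-invariant), the identity map satisfies all the requirements; so I would assume in addition $K\norm G D<G$.

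\emph{Removing $K$ and applying induction.} Because $b_\vartheta$ is of defect zero its block algebra is a full matrix algebra, so the second-cohomology class controlling the Clifford theory of $\vartheta$ has order prime to $p$ (the relevant $H^2(-,-)$, with values in a $p'$-torsion multiplicative group, has no $p$-torsion). Feeding this into the theory of block isomorphisms of modular character triples from \cite{MRR} — which, for a defect-zero block, packages the classical Fong reduction — I would produce finite groups $\widetilde K\unlhd\widetilde G\unlhd\widetilde A$ with $\widetilde K\leq\zent{\widetilde A}$ a $p'$-group, a faithful $\nu\in\IBr(\widetilde K)$, and an isomorphism $A/K\cong\widetilde A/\widetilde K$ carrying $G/K$ onto $\widetilde G/\widetilde K$ and $K\norm G D/K$ onto a subgroup $\widetilde M/\widetilde K$, such that $(A,K,\vartheta)\isob(\widetilde A,\widetilde K,\nu)$ compatibly with these identifications. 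Since $\widetilde K$ is central of $p'$-order, passing to $\widetilde A/\widetilde K$ matches $p$-blocks over $\nu$ with $p$-blocks of the corresponding subgroups up to Morita equivalence by a matrix algebra: it matches $B$ with a block $\widetilde B$ of $\widetilde G$ over $\nu$ of defect group $\widetilde D\cong D$ (recall $D\cap K=1$), matches $b'$ with the Brauer correspondent $\widetilde b'$ of $\widetilde B$ in $\widetilde M$, identifies $\widetilde M$ with $\norm{\widetilde G}{\widetilde D}$ — here one uses $D\cap K=1$ together with the compatibility of the Fong reduction with Brauer correspondence, which gives $\norm G D K/K=\norm{G/K}{DK/K}$ — and provides $\norm A D_B$-equivariant bijections $\IBr(B\mid\vartheta)\leftrightarrow\IBr(\widetilde B)$ and $\IBr(b'\mid\vartheta)\leftrightarrow\IBr(\widetilde b')$ compatible with the triple data. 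Under all of this, the conclusion sought in the lemma becomes exactly Conjecture \ref{conj:Inductive Alperin bound}, in the sharpened form of Remark \ref{rmk:Removing intermediate group} (i.e.\ with respect to $\norm{\widetilde G}{\widetilde D}=\widetilde M$), for the pair $\widetilde G\unlhd\widetilde A$ and the block $\widetilde B$. Every non-abelian simple group involved in $\widetilde G$ is involved in $G/K$, hence in $G$; and a short order computation — using that $K$ is non-central and that $\widetilde K$ is a cyclic $p'$-group — places $\widetilde G\unlhd\widetilde A$ strictly before $G\unlhd A$ in the well-ordering of the minimal counterexample. Hence Remark \ref{rmk:Removing intermediate group} furnishes the required data for $\widetilde G\unlhd\widetilde A$ and $\widetilde B$; transporting it back along the character-triple isomorphism and the bijections above produces $\Upsilon_\vartheta$, and the $\norm A D_B$-equivariance together with the block isomorphisms $(A_\eta,G,\eta)\isob(K\norm A D_\eta,K\norm G D,\Upsilon_\vartheta(\eta))$ follow by transitivity and the compatibility properties of $\isob$ recorded in \cite{MRR}.

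\emph{Main obstacle.} The technical heart is the middle step. One must check that the Fong reduction of the defect-zero block $b_\vartheta$ can be realized as an $A$-compatible block isomorphism of character triples that \emph{simultaneously} respects the intermediate subgroup $K\norm G D$, the defect group $D$ with its normalizer, and the covering and Brauer-correspondence relations; and one must verify that the resulting pair $\widetilde G\unlhd\widetilde A$ lies within the inductive budget, i.e.\ that $|\widetilde G:\zent{\widetilde G}|\leq|G:\zent G|$, with $|\widetilde A|<|A|$ when equality holds. The $p'$-ness of the relevant cohomology class — a direct consequence of $b_\vartheta$ having trivial defect group — is precisely what makes both points work: it keeps $\widetilde K$ (hence the passage from $A/K$ to $\widetilde A$) under control, and it guarantees that no $p$-local information about $B$ is lost in passing to $\widetilde B$.
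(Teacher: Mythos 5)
Your high-level plan coincides with the paper's: exploit the fact that $\vartheta$ has defect zero as a Brauer character of the normal subgroup $K$ to pass, via Clifford/Fong theory, to a quotient in which $K$ has effectively been removed, and then apply the minimality hypothesis to the strictly smaller pair. Your preliminary reductions ($B$ covers $b_\vartheta$, $D\cap K=1$, may assume $K\norm G D<G$) are correct though not logically necessary. The paper's proof realizes the reduction by constructing a specific $p'$-central extension $\widehat A\to A$ with central kernel $Z$ (via \cite[Lemma~3.12]{MRR}), in which the pullback $\vartheta_0$ of $\vartheta$ extends to $\tilde\vartheta\in\IBr(\widehat A)$; one then quotients $\widehat G$ by a complement $K_0\cong K$ of $Z$ in $\widehat K$ and uses Gallagher's theorem to decompose $\IBr(\widehat B\mid\vartheta_0)$ and $\IBr(\widehat b\mid\vartheta_0)$ in terms of Brauer characters of $\overline G=\widehat G/K_0$ and $\overline H=\norm{\overline G}{\overline D}$, with the block theory of this decomposition controlled by Murai's results. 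You invoke instead a supposedly ready-made ``Fong reduction packaged as a block isomorphism of character triples'' producing $(\widetilde A,\widetilde K,\nu)$ in one step; that tool, in the form you describe, is not available in \cite{MRR}, and the technical content you relegate to the ``main obstacle'' paragraph (compatibility of the reduction with the intermediate subgroup $K\norm G D$, with the defect group $D$ and its normalizer, and with covering and Brauer correspondence) is precisely where the paper spends its effort: it is not a single black box but a combination of the extension construction of \cite[Lemma~3.12]{MRR}, Gallagher's theorem, \cite[Cor.~1.5, Thm.~1.4, Cor.~2.5]{Mur96} for the block theory of the quotient, and \cite[Lemmas~3.15, 3.16]{MRR} for transporting block isomorphisms of modular character triples back to $G\unlhd A$.

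There is also one substantive imprecision in your sketch. You match $\IBr(B\mid\vartheta)$ with $\IBr(\widetilde B)$ for a \emph{single} block $\widetilde B$ of $\widetilde G$, and similarly for $b'$. In the paper's Gallagher decomposition, however, the Brauer characters of $\widehat b$ over $\vartheta_0$ are parameterized by Brauer characters lying in a \emph{set} $\mathcal B'$ of blocks of $\overline H$, and one only gets a containment (not an equality) $\IBr(\widehat B\mid\vartheta_0)\supseteq\coprod_{\overline B\in\mathcal B}\{\varphi\tilde\vartheta_{\widehat G}\mid\varphi\in\IBr(\overline B)\}$ for the corresponding set $\mathcal B$ of Brauer correspondents in $\overline G$; this is why the lemma only produces an injection, and the inductive hypothesis is applied block by block over $\mathcal B'$. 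Your assumption of a single clean Morita equivalence would need to be justified (it is morally what Külshammer--Puig gives, but carrying it out at the level of block isomorphisms of modular character triples, equivariantly for $\norm A D_B$ and compatibly with the intermediate subgroup $K\norm G D$, is exactly what the paper's explicit construction achieves and what your sketch takes for granted). So: the strategy is right and essentially the same as the paper's, but the middle step is not as available off the shelf as you present it.
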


\begin{proof}
Let $\P$ be a projective representation associated to the triple $(A,K,\vartheta)$ and let $\widehat A$ be the $p'$ central extension of $A$ by $Z$ from \cite[Lemma 3.12]{MRR}. Let $\eps:\hat A\rightarrow A$ be the correpsonding epimorphism, and write $\widehat H=\eps^{-1}(H)$ for any subgroup $H\leq A$. Then $\hat{K}=K_0\times Z$, where $K_0$ is isomorphic to $K$ via the restriction of $\eps_{K_0}$. Notice that $\vartheta_0:=\vartheta\circ \eps_{K_0}\in\dzo(K_0)$. Furthermore by the same result, $\vartheta_0$ extends to $\tilde\vartheta\in\IBr(\widehat A)$.  Notice that every non-abelian finite simple group involved in $\widehat{G}$ is also involved in $G$. Write $\overline{G}:=\widehat{G}/K_0$, $\overline{A}:=\widehat{A}/K_0$ and $\overline{D}:=DK_0/K_0$. Write ${H}=\norm{\widehat{G}}{\widehat{D}}=\widehat{\norm G D}$ and notice that $\overline{H}=\norm{\overline{G}}{\overline{D}}$.

 Let $\widehat{B}$ be the unique block of $\widehat{G}$ dominating $B$ (where we are identifying $G$ with $\widehat{G}/Z$). By \cite[Theorem 9.9(c)]{Nav98}, $\widehat{B}$ has defect group $D_0\in\Syl_p(\widehat{D})$, and since every character in $\widehat{B}$ lies over $1_Z$ we have $\IBr(\widehat{B}\mid\vartheta_0)=\IBr(\widehat{B}\mid\vartheta_0\times 1_Z)$ which can be naturally identified with $\IBr(B\mid\vartheta)$. Furthermore, the unique block of $\norm{\widehat{G}}{\widehat{D}}=\norm{\widehat G}{D_0}$ dominating $b'$ is its Brauer correspondent by \cite[Proposition 2.4(b)]{Nav-Spa14I} and similarly as before we obtain that $\IBr(b'\mid \vartheta_0)=\IBr(b'\mid\vartheta_0\times 1_Z)$ can be naturally identified with $\IBr(b\mid\vartheta)$.

By Gallagher's theorem and \cite[Corollary 1.5]{Mur96} there is a set of blocks $\mathcal{B}'\sbs\Bl(\overline{H})$ such that $$\IBr(\widehat b \mid \vartheta_0)=\coprod_{\overline{b}\in\mathcal{B}'}\{\psi\tilde\vartheta_{H}\mid\ \psi\in \IBr(\overline{b})\}.$$

Now if $\overline{b}\in\mathcal{B}'$ then by \cite[Theorem 1.4]{Mur96} a defect group $Q$ of $b$ is contained in $\overline{D}$. Since $\overline{D}\normal \overline{H}$ it follows from \cite[Theorem 4.8]{Nav98} that $\overline{D}\sbs Q$ and we conclude that $\overline{D}$ is a defect group of $\overline{b}$. Now using \cite[Corollary 2.5]{Mur96} we have that
if $\mathcal{B}$ denotes the set of Brauer correspondents of the blocks in $\mathcal{B}'$ in $\overline{G}$ then
$$\IBr(\widehat{B}\mid\vartheta_0)\supseteq\coprod_{\overline{B}\in\mathcal{B}}\{\varphi\tilde\vartheta_{\widehat{G}}\mid\ \varphi\in \IBr(\overline{B})\}$$
(the fact that these blocks are Brauer correspondents is not contained in the statement of  \cite[Corollary 2.5]{Mur96} but in its proof). 
 
 Notice that $|\overline{G}:\zent{\overline{G}}|\leq |G:K\zent G|<|G:\zent G|$. Therefore for any block $\overline{B}\in\mathcal{B}$ with Brauer correspondent $\overline{b}\in\mathcal{B}'$ there is an $\norm{\overline{A}}{\overline{D}}$-equivariant injection
 $$\Omega_{\overline{B}}:\IBr(\overline{b})\hookrightarrow\IBr(\overline{B})$$ with 
 \begin{equation}\label{eq:quotient isom}(\overline{A}_{\overline{\chi}}, \overline{G},\overline\chi)\isob(\norm{\overline{A}}{\overline{D}}_{\overline\psi},\norm{\overline{G}}{\overline{D}},\overline\psi)\end{equation}
 where $\overline{\chi}=\Omega_{\overline{B}}(\overline{\psi})$.

We define $$\widehat\Upsilon_\vartheta:\IBr(\widehat{b}\mid\vartheta_0)\hookrightarrow\IBr(\widehat{B}\mid\vartheta_0)$$
by $\widehat\Upsilon_\vartheta(\psi\tilde\vartheta_{H})\mapsto \Omega_{\bl({\psi})}(\psi)\tilde\vartheta_{\widehat{G}}$, where $\bl({\psi})$ denotes the block of $\psi$ in $\overline{H}$. It is straightforward to check that $\widehat\Upsilon_{\vartheta}$ is a well-defined injection. Further, since $\tilde{\vartheta}_H$ is $\norm{\widehat{A}}{\widehat{D}}$-invariant (because it is a restriction of a character in $\IBr(\widehat{A})$), then $\widehat\Upsilon_{\vartheta}$ is also $\norm{\widehat{A}}{\widehat{D}}_{\widehat{B}}$-equivariant. By \cite[Lemma 3.16]{MRR} applied to the isomorphisms from \ref{eq:quotient isom}, we have that if $\mu\in\IBr(\widehat{b}\mid\vartheta_0)$ and $\eta=\widehat\Upsilon_\vartheta(\mu)$ then
$$(\widehat{A}_\eta,\widehat{G},\eta)\isob(\norm{\widehat{A}}{\widehat{D}}_\mu,\norm{\widehat{G}}{\widehat{D}},\mu).$$
Finally, as explained in the second paragraph of this proof, $\IBr(\widehat{b}\mid\vartheta_0)$ and $\IBr(\widehat{B}\mid\vartheta_0)$ can be naturally identified with $\IBr(\widehat{b}\mid\vartheta)$ and $\IBr(\widehat{B}\mid\vartheta)$, so under this identification $ \widehat\Upsilon_\vartheta$ induces an $\norm A D_B$-equivariant injection $\Upsilon_\vartheta:\IBr(b\mid\vartheta)\hookrightarrow\IBr(B\mid\vartheta)$. Using that $\cent{\widehat{A}}{\widehat{G}}/Z=\cent A G$ by \cite[Lemma 3.12]{MRR}, we can apply \cite[Lemma 3.15]{MRR} to conclude that for all $\varphi\in\IBr(b\mid\vartheta)$ and $\chi=\Upsilon_\vartheta(\varphi)$ we have
$$(A_\chi,G,\chi)\isob(\norm A D_\varphi,\norm G D,\varphi)$$
as desired.
\end{proof}

\begin{pro}
\label{prop:Normal KD}
Let $K$ be a non-central subgroup of $G$ with $K\unlhd A$ and consider the Brauer correspondent $b'$ of $B$ in $K\n_G(D)$. Then there exists an $\n_A(D)$-equivariant injection
\[\Upsilon:\IBr(b'\mid \dzo(K))\hookrightarrow \IBr(B)\]
such that
\[\left(A_\eta, G, \eta\right)\isob\left(K\n_A(D)_\eta, K\n_G(D), \Upsilon(\eta)\right)\]
for every $\eta\in\IBr(b')$.
\end{pro}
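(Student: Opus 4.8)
The plan is to deduce Proposition~\ref{prop:Normal KD} from Lemma~\ref{lem:Normal KD}: one assembles the maps $\Upsilon_\vartheta$ as $\vartheta$ ranges over $\dzo(K)$ into a single $\n_A(D)$-equivariant injection, the only genuine extra work being to cover those $\vartheta\in\dzo(K)$ that are not $A$-invariant, which is done by a Clifford-theoretic reduction to the stabiliser $A_\vartheta$.

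First I would reduce to orbit representatives. Since $K\unlhd A$ and $B$ is $A$-invariant, the block $b'$ is $\n_A(D)$-invariant, so $\n_A(D)$ acts on $\IBr(b')$ and permutes the pieces $\IBr(b'\mid\vartheta)$ according to its action on $\dzo(K)$; note also that $\oh p K=1$, so $\dzo(K)$ consists of the Brauer characters of $K$ lying in blocks of defect zero. Fix a transversal $\cT\subseteq\dzo(K)$ for the $K\n_A(D)$-orbits. It then suffices to construct, for each $\vartheta\in\cT$, an $\n_A(D)_\vartheta$-equivariant injection $\Upsilon_\vartheta\colon\IBr(b'\mid\vartheta)\hookrightarrow\IBr(B\mid\vartheta)$ inducing the required block isomorphisms of character triples, and then to spread this out $\n_A(D)$-equivariantly: the resulting map on $\IBr(b'\mid\dzo(K))$ is well-defined and injective because $\IBr$-sets lying over characters of $K$ in distinct orbits are disjoint, and the $\isob$-property is preserved under conjugation. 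This is the standard gluing argument, run as in \cite{Ros-iMcK} and \cite{MRR}.

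For $\vartheta\in\cT$ that is $A$-invariant, $\Upsilon_\vartheta$ is exactly what Lemma~\ref{lem:Normal KD} gives. Suppose now $\vartheta$ is not $A$-invariant and put $T=A_\vartheta<A$, $G_\vartheta=G\cap T$; we may assume $\IBr(B\mid\vartheta)\neq\emptyset$. As $B$ covers the defect-zero block $\{\vartheta\}$ of $K$, after conjugating $D$ in $G$ we may take $D\leq G_\vartheta$, whence $K\n_G(D)_\vartheta=K\n_{G_\vartheta}(D)$ and $K\n_A(D)_\vartheta=K\n_T(D)$. Let $B_\vartheta$ and $(b')_\vartheta$ be the Fong--Reynolds correspondents over $\vartheta$ of $B$ in $G_\vartheta$ and of $b'$ in $K\n_{G_\vartheta}(D)$; by the compatibility of Fong--Reynolds with the Brauer correspondence, $B_\vartheta$ has defect group $D$ and $(b')_\vartheta$ is its Brauer correspondent in $K\n_{G_\vartheta}(D)$. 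Since $\zent G\leq G_\vartheta$ (as $\zent G$ centralises $K$), one has $|G_\vartheta:\zent{G_\vartheta}|\leq|G:\zent G|$, and $|T|<|A|$, so $(G_\vartheta\unlhd T)$ is strictly smaller than $(G\unlhd A)$ for the ordering fixed in the proof of Theorem~\ref{thm:Reduction}. Hence the argument of Lemma~\ref{lem:Normal KD} applies verbatim to the triple $(T,G_\vartheta,B_\vartheta)$, the non-central normal subgroup $K\unlhd T$ and the now $T$-invariant $\vartheta\in\dzo(K)$ (and, in the degenerate case $K\leq\zent{G_\vartheta}$, where $K\n_{G_\vartheta}(D)=\n_{G_\vartheta}(D)$, one invokes instead Conjecture~\ref{conj:Inductive Alperin bound} for $(G_\vartheta\unlhd T)$ with $M=\n_{G_\vartheta}(D)$, available by minimality and Remark~\ref{rmk:Removing intermediate group}), producing an $\n_T(D)_{B_\vartheta}$-equivariant injection $\IBr((b')_\vartheta\mid\vartheta)\hookrightarrow\IBr(B_\vartheta\mid\vartheta)$ with the corresponding block isomorphisms over $T$. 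Finally I would transport this along the Clifford correspondences $\IBr(B_\vartheta\mid\vartheta)\to\IBr(B\mid\vartheta)$ and $\IBr((b')_\vartheta\mid\vartheta)\to\IBr(b'\mid\vartheta)$, which are compatible with $\isob$-isomorphisms of character triples by \cite{MRR}, and compose to obtain $\Upsilon_\vartheta$ with the right $\n_A(D)_\vartheta$-equivariance.

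The main obstacle I anticipate is exactly this last reduction: keeping all block-theoretic data coherent under passage to the stabiliser --- that one may use a single defect group $D$ throughout, that the Fong--Reynolds correspondent of $b'$ over $\vartheta$ is again the Brauer correspondent of the Fong--Reynolds correspondent of $B$, and that the $\isob$-isomorphisms produced over $T$ survive Clifford induction while staying compatible with the residual $\n_A(D)$-action used in the gluing. The invariant case is immediate from Lemma~\ref{lem:Normal KD}, and the gluing is formal.
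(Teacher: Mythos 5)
Your overall strategy --- pass to a transversal, reduce to the $A$-invariant case via Fong--Reynolds to the stabiliser $A_\vartheta$, apply Lemma~\ref{lem:Normal KD}, and glue back --- is the same one the paper follows, and the Fong--Reynolds reduction you describe is sound. However, there is a genuine gap in the injectivity of the glued map. You assert that the assembled $\Upsilon$ is injective "because $\IBr$-sets lying over characters of $K$ in distinct orbits are disjoint." But your transversal $\mathcal T$ is for the $K\n_A(D)$-action (equivalently the $\n_A(D)$-action) on $\dzo(K)$, whereas disjointness of $\IBr(B\mid\vartheta)$ and $\IBr(B\mid\eta)$ is governed by $G$-conjugacy of $\vartheta$ and $\eta$. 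Since $K\n_G(D)$ may be a proper subgroup of $G$, two characters lying in distinct $\n_A(D)$-orbits can very well be $G$-conjugate, in which case the two fibres in $\IBr(B)$ coincide and $\Upsilon$ fails to be injective. This is precisely the point the paper's proof spends its last paragraph on: the transversal $\mathcal T$ is chosen inside the set of $\vartheta$ such that $b'$ covers $\bl(\vartheta)$, which forces $D$ to be a defect group of both Fong--Reynolds correspondents $(b')_\vartheta$ and $B_\vartheta$; then if $\vartheta^g=\eta$ with $g\in G$, one deduces that $D^g$ and $D$ are both defect groups of $B_\eta$, so some $h\in G_\eta$ gives $D^{gh}=D$ and $\vartheta^{gh}=\eta$ with $gh\in\n_G(D)$, contradicting the transversal choice. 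Without this argument your gluing step does not go through.

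A related, smaller issue: you propose to "conjugate $D$ in $G$ so that $D\leq G_\vartheta$," but replacing $D$ by a $G$-conjugate also replaces $b'$ by a different Brauer correspondent, which shifts the domain $\IBr(b'\mid\dzo(K))$ you are trying to map from. The paper avoids this by the transversal restriction just described: if $b'$ covers $\bl(\vartheta)$ then $D\leq G_{\bl(\vartheta)}=G_\vartheta$ comes for free, and $D$ is a common defect group of the relevant Fong--Reynolds correspondents without any re-conjugation. Finally, the explicit split into $A$-invariant versus non-invariant $\vartheta$, and the "degenerate" sub-case $K\leq\zent{G_\vartheta}$, are unnecessary: the paper applies Lemma~\ref{lem:Normal KD} uniformly to $(G_\vartheta, A_\vartheta, B_\vartheta)$, and the inductive inequality used inside that lemma is checked against the fixed minimal pair $(G,A)$ via $|G_\vartheta:\z(G_\vartheta)|\leq |G:K\z(G)|<|G:\z(G)|$, so $K$ becoming central in $G_\vartheta$ causes no extra difficulty.
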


\begin{proof}
Let $\mathcal{T}$ be an $\n_A(D)$-transversal in the set of those $\vartheta\in\dzo(K)$ such that $b'$ covers $\bl(\vartheta)$. Observe then that $D$ is a a defect group of the Fong--Reynolds correspondent $B_\vartheta$ of $B$ over $\bl(\vartheta)$ and that $G_\vartheta=G_{\bl(\vartheta)}$. Now, for any $\vartheta\in\mathcal{T}$, we have $|G_\vartheta:\z(G_\vartheta)|\leq |G:K\z(G)|<|G:\z(G)|$ and every non-abelian finite simple group involved in $G_\vartheta$ is also involved in $G$. Therefore, we can apply Lemma \ref{lem:Normal KD} with $K:=K$, $G:=G_\vartheta$ and $A:=A_\vartheta$ to obtain an $\n_A(D)_\vartheta$-equivariant injection
\[\Upsilon_\vartheta^{G_\vartheta}:\ibr(b_\vartheta'\mid\vartheta)\hookrightarrow\ibr(B_\vartheta\mid \vartheta)\]
inducing block isomorphisms of modular characters triples and where $b'_\vartheta$ is the Fong--Reynolds correspondent of $b'$ over $\bl(\vartheta)$. We then obtain an $\n_A(D)$-equivariant injection
\[\Upsilon_\vartheta^G:\ibr(b'\mid \vartheta)\hookrightarrow\ibr(B\mid \vartheta)\]
which induces block isomorphisms of modular character triples thanks to \cite[Lemma 3.8]{MRR}. Consider now an $\n_A(D)_\vartheta$-transversal $\mathcal{T}_\vartheta$ in $\ibr(b'\mid \vartheta)$ and notice then that
\[\mathbb{T}:=\coprod\limits_{\vartheta\in\mathcal{T}}\mathcal{T}_\vartheta\]
is an $\n_A(D)$-transversal in $\ibr(b'\mid \dzo(K))$. We define
\[\Upsilon\left(\psi^x\right):=\Upsilon_\vartheta^G(\psi)^x\]
for every $x\in \n_A(D)$ and any $\vartheta\in\mathcal{T}$ and $\psi\in\mathcal{T}_\vartheta$. To conclude it is enough to show that $\Upsilon$ is injective. For this it suffices to show that $\ibr(B\mid \vartheta)$ and $\ibr(B\mid \eta)$ are disjoint whenever $\vartheta,\eta\in\mathcal{T}$ with $\vartheta\neq \eta$. Suppose this is not the case. Then we can find $g\in G$ such that $\vartheta^g=\eta$. Recall, by the choice of the transversal $\mathcal{T}$, that $D$ is a defect group of the Fong--Reynolds correspondent $B_\vartheta$ of $B$ over $\bl(\vartheta)$ in $G_\vartheta$ and of the Fong--Reynolds correspondent $B_\eta$ of $B$ over $\bl(\eta)$ in $G_\eta$. Since $\vartheta^g=\eta$, we deduce that $(B_\vartheta)^g=B_\eta$ and therefore $D^g$ and $D$ are defect groups of $B_\eta$. Then, there is some $h\in G_\eta$ such that $D^{gh}=D$. We conclude that $\vartheta^{gh}=\eta$ for $gh\in\n_G(D)$, a contradiction. This proves that the map $\Upsilon$ constructed above is injective and the proof is now complete.
\end{proof}

\begin{cor}
\label{cor:Normal KD}
Let $K$ be a non-central subgroup of $G$ with $K\unlhd A$ and $K\cap D\leq \z(K)$. Then $G=K\n_G(D)$. In particular $A=G\n_A(D)=K\n_A(D)$.
\end{cor}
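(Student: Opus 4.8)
The plan is to argue by contradiction from the minimality of $G\unlhd A$ and $B$. If $D$ is normal in $G$ then $G=\n_G(D)\leq K\n_G(D)\leq G$ and the first assertion is immediate, so I would assume $D$ is not normal in $G$ and suppose, for contradiction, that $G\neq M$, where $M:=K\n_G(D)$ (a subgroup, since $K\unlhd G$). Then $\n_G(D)\leq M<G$, and $M$ is $\n_A(D)$-invariant because $\n_A(D)$ normalises $K$ (as $K\unlhd A$) and normalises $\n_G(D)=G\cap\n_A(D)$ (as $G\unlhd A$). I would then verify that $G\unlhd A$, the block $B$, and the subgroup $M$ satisfy every requirement of Conjecture \ref{conj:Inductive Alperin bound}, contradicting the fact that Conjecture \ref{conj:Inductive Alperin bound} fails for $G\unlhd A$ and $B$. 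The injection needed for this will be supplied by Proposition \ref{prop:Normal KD} applied to $K$, once one knows that $\IBr(b'\mid\dzo(K))=\IBr(b')$, where $b'$ is the Brauer correspondent of $B$ in $M$ (well defined with defect group $D$, since $\n_M(D)=\n_G(D)\leq M$).

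\textbf{The main step: $\IBr(b'\mid\dzo(K))=\IBr(b')$.} First I would pin down the $p$-local structure. Since $\o_p(K)$ is characteristic in $K\unlhd G$ it is a normal $p$-subgroup of $G$, so $\o_p(G)=1$ forces $\o_p(K)=1$; and since the hypothesis $K\cap D\leq\z(K)$ makes $K\cap D$ a normal $p$-subgroup of $K$, it follows that $K\cap D=1$. Next, for any block $c$ of $K$ covered by $b'$, Knörr's theorem (see \cite{Nav98}) furnishes a defect group $D_1$ of $b'$ with $D_1\cap K$ a defect group of $c$; as all defect groups of $b'$ are $M$-conjugate to $D$ and $K\unlhd M$, this gives $D_1\cap K=1$, so $c$ has defect zero. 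Because $\o_p(K)=1$, every irreducible Brauer character of such a $c$ lies in $\dzo(K)$; and since each $\eta\in\IBr(b')$ lies over an irreducible Brauer character of some block of $K$ covered by $b'$, we conclude $\IBr(b')\subseteq\IBr(b'\mid\dzo(K))$, hence equality.

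\textbf{Conclusion.} Proposition \ref{prop:Normal KD} then yields an $\n_A(D)$-equivariant injection $\Upsilon\colon\IBr(b')=\IBr(b'\mid\dzo(K))\hookrightarrow\IBr(B)$ with $(A_{\Upsilon(\eta)},G,\Upsilon(\eta))\isob((K\n_A(D))_\eta,K\n_G(D),\eta)$ for all $\eta\in\IBr(b')$. Using $K\n_G(D)=M$, $K\n_A(D)=M\n_A(D)$ (as $\n_G(D)\leq\n_A(D)$) and the fact that $B$ is $A$-invariant (so $\n_A(D)_B=\n_A(D)$), this is precisely the $\n_A(D)_B$-invariant injection $\IBr(b')\hookrightarrow\IBr(B)$ inducing the block isomorphisms of modular character triples demanded by Conjecture \ref{conj:Inductive Alperin bound} for $G\unlhd A$, $B$ and the subgroup $\n_G(D)\leq M<G$ — the desired contradiction. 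Hence $G=K\n_G(D)$, and therefore $A=G\n_A(D)=K\n_G(D)\n_A(D)=K\n_A(D)$, using the earlier equality $A=G\n_A(D)$ and $\n_G(D)\leq\n_A(D)$.

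\textbf{Main obstacle.} Once Proposition \ref{prop:Normal KD} is in hand the argument is essentially bookkeeping around the defining properties of $M$; the one point that genuinely needs care is the identity $\IBr(b'\mid\dzo(K))=\IBr(b')$ — that is, combining $\o_p(G)=1$ with $K\cap D\leq\z(K)$ to force $K\cap D=1$, then invoking Knörr's theorem together with the $M$-conjugacy of the defect groups of $b'$ to see that every block of $K$ covered by $b'$ has defect zero, and finally recalling that defect-zero blocks of $K$ contribute exactly to $\dzo(K)$ since $\o_p(K)=1$.
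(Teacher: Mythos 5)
Your proof is correct and tracks the paper's argument in its essentials — both hinge on Proposition~\ref{prop:Normal KD} together with the identity $\IBr(b')=\IBr(b'\mid\dzo(K))$ — but you take a mild shortcut that the paper does not. The paper applies the inductive hypothesis to the proper subgroup $K\n_G(D)$ (valid since $|K\n_G(D):\z(K\n_G(D))|<|G:\z(G)|$, using Remark~\ref{rmk:Removing intermediate group}) to obtain an additional injection $\Phi_K:\IBr(b)\hookrightarrow\IBr(b')$ from the Brauer correspondent in $\n_G(D)$, and then composes $\Upsilon_K\circ\Phi_K$ so as to realise Conjecture~\ref{conj:Inductive Alperin bound} with the choice $M:=\n_G(D)$. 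You instead take $M:=K\n_G(D)$ directly and observe that $\Upsilon$ alone already meets every requirement of the conjecture (the subgroup is $\n_A(D)$-invariant, proper, and the block isomorphism of modular character triples from Proposition~\ref{prop:Normal KD} has exactly the required form). This is a genuine streamlining: it saves one invocation of the inductive hypothesis and the composition step. The remaining small difference is cosmetic: you derive $K\cap D=1$ from $\o_p(G)=1$, whereas the paper only observes that the relevant defect group equals $\o_p(K)$; since $\o_p(K)\le\o_p(G)=1$ these are the same conclusion, and both yield $\IBr(b')=\IBr(b'\mid\dzo(K))$. Your more explicit Knörr-theorem bookkeeping (conjugating defect groups of $b'$ by $M$ and using $K\unlhd M$) fills in a detail the paper leaves implicit and is correct.
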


\begin{proof}
Let $b$ and $b'$ be the Brauer correspondents of $B$ in $\n_G(D)$ and $K\n_G(D)$ respectively. If $G>K\n_G(D)$, then $|G:\z(G)|>|K\n_G(D):\z(K\n_G(D))|$ and the inductive hypothesis yields an $\n_A(D)$-equivariant injection
\[\Phi_K:\IBr(b)\hookrightarrow\IBr(b')\]
inducing block isomorphisms of modular character triples. Consider now the injection $\Upsilon_K:\ibr(b'\mid \dzo(K))\hookrightarrow \ibr(B)$ from Proposition \ref{prop:Normal KD} and observe that $\ibr(b')=\ibr(b'\mid \dzo(K))$ since $K\cap D\leq \z(K)$. In fact, suppose that $\psi\in\ibr(b')$ lies above some $\vartheta\in\ibr(K)$. We may assume that $\vartheta$ belongs to a block of $K$ with defect group $K\cap D\leq \z(K)$. Then $K\cap D=\o_p(K)$ and we conclude that $\vartheta\in\dzo(K)$. This proves our claim. Now, combining $\Phi_K$ with $\Upsilon_K$ we conclude that Conjecture \ref{conj:Inductive Alperin bound} holds for $B$, a contradiction. Hence $G=K\n_G(D)$ as required.
\end{proof}

\begin{lem}
\label{lem:Lifting bijections}
Let $N\unlhd A$ be finite groups, consider a subgroup $A_0\leq A$ with $A=NA_0$, and set $H_0:=H\cap A_0$ for every $H\leq A$. Let $\mathcal{S}$ and $\mathcal{S}_0$ be $A_0$-stable subsets of irreducible Brauer characters of $N$ and $N_0$ respectively and assume there exists an $A_0$-equivariant bijection
\[\Psi:\mathcal{S}_0\to\mathcal{S}\]
such that
\[\left(A_\vartheta,N,\vartheta\right)\isob\left(A_{0,\vartheta_0},N_0,\vartheta_0\right)\]
for every $\vartheta_0\in\mathcal{S}_0$ and $\vartheta:=\Psi(\vartheta_0)$. Then, for every $N\leq J\leq A$, there exists an $\n_{A_0}(J)$-equivariant bijection
\[\Phi_J:\IBr\left(J_0\enspace
\middle|\enspace\mathcal{S}_0\right)\to \IBr\left(J\enspace
\middle|\enspace\mathcal{S}\right)\]
such that
\[\left(\n_A(J)_\chi,J,\chi\right)\isob\left(\n_{A_0}(J)_{\chi_0},J_0,\chi_0\right)\]
for every $\chi_0\in\IBr(J_0\mid \mathcal{S}_0)$ and $\chi:=\Phi_J(\chi_0)$.
\end{lem}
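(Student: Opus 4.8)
The plan is to reduce, through Clifford theory, to the intermediate-subgroup behaviour that is built into the block isomorphisms $\isob$ supplied by the hypothesis. First I would record the elementary reductions. Since $N\le J\le A=NA_0$, the modular law gives $J=NJ_0$ and $\n_A(J)=N\,\n_{A_0}(J)$ with $\n_{A_0}(J)=\n_A(J)\cap A_0$. As inner automorphisms act trivially on Brauer characters, $N$ acts trivially on $\IBr(N)$; hence the $A_0$-orbits on $\mathcal{S}$ coincide with the $A$-orbits, and the $J_0$-orbits on any $A_0$-stable subset of $\IBr(N)$ coincide with the $J$-orbits. Fixing an $A_0$-transversal $\mathcal{T}_0\subseteq\mathcal{S}_0$ and writing $\mathcal{T}:=\Psi(\mathcal{T}_0)$ (an $A$-transversal in $\mathcal{S}$), we get
\[\IBr(J\mid\mathcal{S})=\coprod_{\vartheta\in\mathcal{T}}\IBr(J\mid A\cdot\vartheta),\qquad \IBr(J_0\mid\mathcal{S}_0)=\coprod_{\vartheta\in\mathcal{T}}\IBr(J_0\mid A_0\cdot\vartheta_0),\]
and each summand is $\n_{A_0}(J)$-stable. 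So it suffices to construct, for each $\vartheta\in\mathcal{T}$, an $\n_{A_0}(J)$-equivariant bijection $\IBr(J_0\mid A_0\cdot\vartheta_0)\to\IBr(J\mid A\cdot\vartheta)$ with the stated $\isob$ property, and then take the disjoint union.

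Fix such a $\vartheta$ and $\vartheta_0=\Psi^{-1}(\vartheta)$. Since $N\le A_\vartheta$, the modular law gives $A_\vartheta=N\,A_{0,\vartheta_0}$ with $A_{0,\vartheta_0}=A_\vartheta\cap A_0$, and one checks $(J_\vartheta)_0=(J_0)_{\vartheta_0}$ (and similarly for the $A_0$-conjugates of $\vartheta$, using that $\Psi$ is $A_0$-equivariant). The hypothesis furnishes a block isomorphism of modular character triples $(A_\vartheta,N,\vartheta)\isob(A_{0,\vartheta_0},N_0,\vartheta_0)$; by the defining properties of such isomorphisms \cite[Section~3]{MRR} it provides, for every group $N\le L\le A_\vartheta$ with $L_0:=L\cap A_0$, a bijection $\sigma_L\colon\IBr(L\mid\vartheta)\to\IBr(L_0\mid\vartheta_0)$ that is compatible with conjugation and with blocks and satisfies $(\n_{A_\vartheta}(L)_\psi,L,\psi)\isob(\n_{A_{0,\vartheta_0}}(L_0)_{\psi_0},L_0,\psi_0)$ for $\psi\in\IBr(L\mid\vartheta)$ and $\psi_0=\sigma_L(\psi)$. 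I would apply this with $L=J_\vartheta$, and use the analogous data attached by the hypothesis to each $A_0$-conjugate of $\vartheta$, chosen compatibly with the action of $\n_{A_0}(J)$ on the set of $J$-orbits in $A\cdot\vartheta$.

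The bijection $\Phi_J$ on the $\vartheta$-summand is then assembled via the Clifford correspondence for $N\unlhd J$. Picking a $J$-transversal $\{\vartheta^a:a\in\mathcal{A}\}$ with $\mathcal{A}\subseteq A_0$ in the orbit $A\cdot\vartheta$ — possible since $A=NA_0$ and $N\le J$ — and noting that $\{\vartheta_0^a:a\in\mathcal{A}\}$ is then a $J_0$-transversal in $A_0\cdot\vartheta_0$, we have $\IBr(J\mid A\cdot\vartheta)=\coprod_a\IBr(J\mid\vartheta^a)$ and likewise on the $A_0$-side. For each $a$ I would let $\IBr(J_0\mid\vartheta_0^a)\to\IBr(J\mid\vartheta^a)$ be the composite of the Clifford correspondence $\IBr(J_0\mid\vartheta_0^a)\to\IBr((J_0)_{\vartheta_0^a}\mid\vartheta_0^a)$, the inverse of $\sigma_{J_{\vartheta^a}}$ (legitimate since $(J_{\vartheta^a})_0=(J_0)_{\vartheta_0^a}$), and the Clifford correspondence $\IBr(J_{\vartheta^a}\mid\vartheta^a)\to\IBr(J\mid\vartheta^a)$; the disjoint union over $a$ is the desired bijection $\Phi_J$ on this summand. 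Its $\n_{A_0}(J)$-equivariance reduces to a routine transversal computation once $\mathcal{A}$ and the chosen triple isomorphisms are taken compatibly with the $\n_{A_0}(J)$-action (the Clifford correspondence being canonical and $\sigma$ conjugation-compatible).

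The crux — and the step I expect to be the main obstacle — is the $\isob$ property of $\Phi_J$. After replacing $\chi_0$ and $\chi=\Phi_J(\chi_0)$ by $\n_{A_0}(J)$-conjugates, which is harmless since $\isob$ is conjugation-invariant, I may assume $\chi\in\IBr(J\mid\mu)$ with $\mu=\vartheta^a$, $\mu_0=\vartheta_0^a$, so that $\chi=\Ind_{J_\mu}^{J}\chi'$ and $\chi_0=\Ind_{(J_0)_{\mu_0}}^{J_0}\chi_0'$ for the Clifford correspondents $\chi'\in\IBr(J_\mu\mid\mu)$, $\chi_0'\in\IBr((J_0)_{\mu_0}\mid\mu_0)$, with $\chi_0'=\sigma_{J_\mu}(\chi')$. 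I would then combine three block isomorphisms of modular character triples. First, the compatibility of the Clifford correspondence with such isomorphisms — the modular counterpart of the Fong--Reynolds step already used for Proposition~\ref{prop:Normal KD} and extractable from \cite[Lemma~3.8]{MRR} — gives
\[(\n_A(J)_\chi,J,\chi)\isob\bigl(\n_A(J)_\chi\cap A_\mu,\,J_\mu,\,\chi'\bigr)\]
and likewise $(\n_{A_0}(J)_{\chi_0},J_0,\chi_0)\isob\bigl(\n_{A_0}(J)_{\chi_0}\cap A_{0,\mu_0},\,(J_0)_{\mu_0},\,\chi_0'\bigr)$, using the orbit--stabiliser identities $\n_A(J)_\chi=J\bigl(\n_A(J)_\chi\cap A_\mu\bigr)$ and $J\cap A_\mu=J_\mu$ (and their $A_0$-analogues). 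Second, the isomorphism $(A_\mu,N,\mu)\isob(A_{0,\mu_0},N_0,\mu_0)$ supplied by the hypothesis, restricted to the intermediate group $J_\mu$ and further to the subgroup $\n_A(J)_\chi\cap A_\mu$ (which contains $J_\mu$ and corresponds to $\n_{A_0}(J)_{\chi_0}\cap A_{0,\mu_0}$), gives $\bigl(\n_A(J)_\chi\cap A_\mu,J_\mu,\chi'\bigr)\isob\bigl(\n_{A_0}(J)_{\chi_0}\cap A_{0,\mu_0},(J_0)_{\mu_0},\chi_0'\bigr)$. Chaining these three relations by transitivity of $\isob$ yields $(\n_A(J)_\chi,J,\chi)\isob(\n_{A_0}(J)_{\chi_0},J_0,\chi_0)$, completing the proof. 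The genuine difficulty is thus the compatibility of the Clifford correspondence with $\isob$ in the modular setting, together with the bookkeeping matching up the various normalisers and stabilisers on the two sides; the initial reductions and the equivariance verifications, although laborious, are routine.
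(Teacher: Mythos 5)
Your proposal is correct and follows essentially the same route as the paper, which simply cites the Clifford-theoretic lifting argument of \cite[Prop.~2.10]{Ros22} transplanted to the block-isomorphism setting of \cite[Section~3]{MRR}: decompose by $A_0$-orbits, pass to stabilisers via the Clifford correspondence (using its compatibility with $\isob$ as in \cite[Lemma~3.8]{MRR}), restrict the given isomorphism of modular character triples at the level of $N$ to the intermediate group $J_\mu$, and conclude by transitivity of $\isob$. The equivariance and transversal bookkeeping you flag as routine are indeed exactly the details handled in the cited proofs.
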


\begin{proof}
This follows by arguing as in the proof of \cite[Proposition 2.10]{Ros22} but applying the results on block isomorphisms of modular character triples from \cite[Section 3]{MRR} (see also the proof of \cite[Theorem 4.5]{MRR}).
\end{proof}

\begin{pro}
\label{pro:Non-central defect}
Let $K$ be a non-central subgroup of $G$ with $K\unlhd A$. Then, $K\cap D$ is not contained in $\z(K)$.
\end{pro}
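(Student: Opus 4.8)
The plan is to argue by contradiction: assuming $K\cap D\le\z(K)$, I will show that $B$ is not a counterexample after all, namely that Conjecture \ref{conj:Inductive Alperin bound} holds for $B$ with $M=\n_G(D)$. Since $K\unlhd A$ forces $K\unlhd G$, we have $\o_p(K)\le\o_p(G)=1$, so the normal $p$-subgroup $K\cap D$ of $K$ is trivial, i.e.\ $K\cap D=1$; by Corollary \ref{cor:Normal KD} this gives $G=K\n_G(D)$ and $A=K\n_A(D)$. We may assume $D\not\unlhd G$ (otherwise $M=G$ trivially works), so $D\ne1$ and $KD>K$.

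Next I would record the relevant structure. As $[D,K]\le K$, writing $g=kn$ with $k\in K$, $n\in\n_G(D)$ gives $D^g\le DK$, so $KD\unlhd G$, and likewise $KD\unlhd A$; hence $\o_p(KD)\le\o_p(G)=1$, $G=(KD)\,\n_G(D)$, and $KD\cap\n_G(D)=D\,\n_K(D)=\n_{KD}(D)=:L$ with $D=\o_p(L)$ and $\o_p(\n_K(D))=1$. By Kn\"orr's theorem the block $B_{KD}$ of $KD$ covered by $B$ (for a suitable defect group $D$ of $B$) has defect group $D\cap KD=D$, and its Brauer correspondent $b_{KD}$ in $L$ has defect group $D=\o_p(L)$, hence is inflated from a defect-zero block of $\n_K(D)$, so that $|\IBr(b_{KD})|=1$. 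Moreover $B_{KD}$ covers a block of $K$ whose defect group lies in $K\cap D=1$, i.e.\ a defect-zero block; since $KD/K$ is a $p$-group, the Fong reduction makes $B_{KD}$ Morita equivalent over $k$ to the group algebra of a $p$-group (the relevant $2$-cocycle lies in $H^2$ of a $p$-group with coefficients in a $p'$-group, hence is trivial), so $B_{KD}$ is nilpotent and $|\IBr(B_{KD})|=1$.

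The core step is then to apply the inductive hypothesis to $KD\unlhd A$. Provided $|KD:\z(KD)|<|G:\z(G)|$ — a comparison which holds unless $G$ is a central product of $KD$ with the $p'$-group $\z(G)$, in which case one reduces to $KD$ by tensoring with linear characters of $\z(G)$, ultimately reaching the case $KD=G$ treated below — Remark \ref{rmk:Removing intermediate group} yields an $\n_A(D)_{B_{KD}}$-equivariant injection $\IBr(b_{KD})\hookrightarrow\IBr(B_{KD})$ inducing block isomorphisms $(A_\chi,KD,\chi)\isob(\n_A(D)_\vartheta,L,\vartheta)$, which by the previous paragraph is a bijection. As $K$ acts trivially on $\Bl(KD)$, the group $\n_A(D)$ is transitive on the $G$-orbit of $B_{KD}$, and spreading the bijection over this orbit gives an $\n_A(D)$-equivariant bijection $\Psi$ between the unions $\mathcal{S}_0$ of the sets $\IBr(b_{KD}^{x})$ and $\mathcal{S}$ of the sets $\IBr(B_{KD}^{x})$, still inducing those block isomorphisms. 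Since $G=(KD)\,\n_G(D)$ and $KD\cap\n_G(D)=L$, Lemma \ref{lem:Lifting bijections} with $N:=KD$, $A_0:=\n_A(D)$, $J:=G$ now provides an $\n_A(D)$-equivariant bijection $\Phi\colon\IBr(\n_G(D)\mid\mathcal{S}_0)\to\IBr(G\mid\mathcal{S})$ with $(A_\chi,G,\chi)\isob(\n_A(D)_{\chi_0},\n_G(D),\chi_0)$ for $\chi=\Phi(\chi_0)$. By the Harris--Kn\"orr theorem the blocks of $L$ covered by the Brauer correspondent $b$ of $B$ in $\n_G(D)$ form precisely the $\n_G(D)$-orbit of $b_{KD}$, so $\IBr(b)=\IBr(b\mid\mathcal{S}_0)$; and since $\isob$ respects the Brauer correspondence of blocks, $\Phi$ restricts to an $\n_A(D)_B$-equivariant injection $\IBr(b)\hookrightarrow\IBr(B)$ inducing $(A_\chi,G,\chi)\isob(\n_A(D)_\vartheta,\n_G(D),\vartheta)$. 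This witnesses Conjecture \ref{conj:Inductive Alperin bound} for $B$ with $M=\n_G(D)$, the desired contradiction. In the leftover case $G=KD=K\rtimes D$, any defect-zero block of $K$ covered by $B$ is automatically $G$-invariant (comparing $|D|=|G/K|$ with $|G_\vartheta/K|$, which contains an isomorphic copy of $D$), so $B$ and $b$ are nilpotent blocks with defect group $D$ and a single irreducible Brauer character; the required block isomorphism between them then follows from the structure theory of nilpotent blocks, or by running the argument of Lemma \ref{lem:Normal KD} with the quotient $\widehat G/K_0$, which is the direct product of the normal Sylow $\overline D$ with a central $p'$-group and for which the Alperin-bound condition is vacuous.

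The hardest points will be the block-theoretic facts that the injections produced by the inductive hypothesis are in fact bijections — this is exactly where the defect-zero, resp.\ nilpotent, structure of the relevant blocks of $K$ and $KD$ enters — together with assembling them $\n_A(D)$-equivariantly over the correct block orbit so that Lemma \ref{lem:Lifting bijections} can be applied, and the careful check that $\isob$ transports the Brauer correspondence $b\leftrightarrow B$ as required for $\Phi$ to restrict to the sought injection $\IBr(b)\hookrightarrow\IBr(B)$. The comparison of $|KD:\z(KD)|$ with $|G:\z(G)|$, and in particular the separate, nilpotent-block treatment of $G=K\rtimes D$, is the other delicate point.
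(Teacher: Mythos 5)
The paper's proof avoids the delicate size comparison that your argument hinges on. After establishing that the block $C$ of $KD$ covering a nilpotent block $B'$ of $K$ is itself nilpotent (via Brou\'e--Puig and a result of Cabanes), the paper invokes \cite[Lemma 8.9]{MRR} — a dedicated result on nilpotent blocks — to produce the block isomorphism of modular character triples
\[
(A_\psi, KD, \psi)\isob(\n_A(D)_\varphi, \n_{KD}(D), \varphi)
\]
for the unique Brauer characters $\psi$, $\varphi$ of $C$ and its Brauer correspondent $c$, with no appeal to the inductive hypothesis on $KD$. It then shows $\psi$ is $A$-invariant via a careful Fong--Reynolds argument (applying the minimality of $G\unlhd A$ to $A_\psi<A$, contradiction), so that Lemma \ref{lem:Lifting bijections} can be applied with $\mathcal{S}=\{\psi\}$, $\mathcal{S}_0=\{\varphi\}$.

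Your proposal instead tries to invoke the inductive hypothesis for $KD\unlhd A$ directly (via Remark \ref{rmk:Removing intermediate group}), which requires $|KD:\z(KD)|<|G:\z(G)|$. You correctly notice that this inequality can fail — precisely when $G=KD\cdot\z(G)$ with $\z(KD)=KD\cap\z(G)$ — and this is where your argument has a genuine gap: the fallbacks you propose (``tensoring with linear characters of $\z(G)$'', ``running the argument of Lemma \ref{lem:Normal KD} with $\widehat{G}/K_0$'', or ``the structure theory of nilpotent blocks'') are sketched but not carried out, and the paper does not take this route at all; none of these reductions is obviously compatible with producing a $\isob$-isomorphism of the required form over the correct intermediate group. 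A second, smaller issue is the orbit-spreading step: to apply Lemma \ref{lem:Lifting bijections} you need $\mathcal{S}$, $\mathcal{S}_0$ to be $\n_A(D)$-stable and $\Psi$ to be $\n_A(D)$-equivariant with the block isomorphisms holding at \emph{every} character in the orbit, not just at $B_{KD}$ itself; the paper handles this more cleanly by first establishing $A$-invariance of $\psi$, so that there is only one orbit element and nothing to spread. Your observation that $K\cap D=1$ (from $\o_p(G)=1$) and your argument for nilpotency of $C$ via Fong reduction and a cocycle vanishing are fine, just an alternative to the paper's Brou\'e--Puig/Cabanes route. In short: the decomposition into ``reduce to the nilpotent block of $KD$ and lift with Lemma \ref{lem:Lifting bijections}'' matches the paper, but replacing the use of \cite[Lemma 8.9]{MRR} by the inductive hypothesis introduces an unresolved boundary case.
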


\begin{proof}
Consider a block $B'$ of $K$ covered by $B$. Without loss of generality we may assume that $K\cap D$ is a defect group of $B'$. Assume now that $K\cap D\leq \z(K)$. Then $KD$ is a normal subgroup of $A$ thanks to Corollary \ref{cor:Normal KD}. Furthermore, in this case, \cite[(1.ex.1)]{Bro-Pug80} shows that the block $B'$ is nilpotent. Since $KD/K$ is a $p$-group, \cite[Corollary 9.6]{Nav98} implies that there exists a unique block $C$ of $KD$ covering $B'$, and which is in turn covered by $B$. Furthermore, by applying \cite[Theorem 2]{Cab87} (see also the remark on non-stable blocks at the end of that proof) we conclude that $C$ is also nilpotent. We denote by $c$ the Brauer correspondent of $C$ in $\n_{KD}(D)$, also a nilpotent block, and let $\psi$ and $\varphi$ be the unique Brauer characters belonging to $C$ and $c$ respectively. Now, according to \cite[Lemma 8.9]{MRR} there exists a block isomorphism of modular character triples
\begin{equation}
\label{eq:Non-central defect}
\left(A_\psi,KD,\psi\right)\isob\left(\n_A(D)_\varphi,\n_{KD}(D),\varphi\right).
\end{equation}
We claim that $\psi$ is $A$-invariant, and therefore $\varphi$ is $\n_A(D)$-invariant. For this, notice that $\IBr(B)$ and $\IBr(b)$ coincide with the sets $\IBr(B\mid \psi)$ and $\IBr(b\mid \varphi)$ respectively. By applying \cite[Theorem 8.9 and Theorem 9.14]{Nav98}, we deduce that, if $B'$ and $b'$ are the Fong--Reynolds correspondents of $B$ over $\bl(\psi)$ and of $b$ over $\bl(\varphi)$ respectively, then induction of Brauer characters yields bijections between $\IBr(B'\mid \psi)$ and $\IBr(B\mid \psi)$ and between $\IBr(b'\mid \varphi)$ and $\IBr(b\mid \varphi)$. If $A_\psi<A$, then by the inductive hypothesis we obtain an injection of $\IBr(b')$ into $\IBr(B')$ with the properties described in Conjecture \ref{conj:Inductive Alperin bound}. Since $\IBr(B')$ and $\IBr(b')$ coincide with $\IBr(B'\mid \psi)$ and $\IBr(b'\mid \varphi)$ respectively, we can apply \cite[Lemma 3.8]{MRR} to obtain an injection of $\IBr(b)$ into $\IBr(B)$ inducing block isomorphisms of modular character triples as required by Conjecture \ref{conj:Inductive Alperin bound}. This contradicts our choice of $B$ and therefore $\psi$ must be $A$-invariant. Since $A_\psi=A_C$ and $\varphi$ is the unique Brauer character belonging to the Brauer correspondent $c$ of $C$ we deduce that $\n_A(D)=\n_A(D)_C=\n_A(D)_c=\n_A(D)_\varphi$. This proves our claim.

Now, recalling \eqref{eq:Non-central defect}, we can apply Lemma \ref{lem:Lifting bijections}, with $A:=A$, $A_0:=\n_A(D)$, $N:=KD$, $J:=G$, and where $\mathcal{S}=\{\psi\}$ and $\mathcal{S}_0=\{\varphi\}$, to obtain an $\n_A(D)$-equivariant bijection
\[\Omega:\IBr(\n_G(D)\mid \varphi)\to \IBr(G\mid \psi)\] 
inducing block isomorphisms of modular character triples. In particular, $\Omega$ restricts to a bijection between $\IBr(b)$ and $\IBr(B)$ and so Conjecture \ref{conj:Inductive Alperin bound} holds for the block $B$ against our previous assumption. We conclude that $K\cap D$ is not contained in $\z(K)$.
\end{proof}

\begin{lem}
\label{lem:Isomorphic simple groups, lifting}
Let $K\unlhd A$ be finite groups with $K$ perfect and assume that $p$ does not divide the order of $\z(K)$ and that $K/\z(K)$ is the direct product of copies of a non-abelian simple group $S$ of order divisible by $p$. Let $X$ be the universal $p'$-covering group of $S$. If Conjecture \ref{conj:Inductive Alperin bound} holds with respect to $X\unlhd X\rtimes \aut{X}$, then Conjecture \ref{conj:Inductive Alperin bound} holds for $K\unlhd A$.
\end{lem}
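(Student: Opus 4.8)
The plan is to deduce the statement from the assumed instance of Conjecture~\ref{conj:Inductive Alperin bound} for $X\unlhd X\rtimes\aut X$ in three stages: reduce $K$ to a direct product of copies of $X$; transfer the hypothesis from $X\rtimes\aut X$ to the stabiliser in $A$ of a single factor; and then reassemble the factorwise data into the datum required for $K\unlhd A$ by a wreath-product argument.

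\textbf{Reduction to a direct product.} Fix a block $B$ of $K$. Exactly as in the proof of Proposition~\ref{pro:Non-central defect}, using Fong--Reynolds together with \cite[Lemma~3.8]{MRR} we may assume that $B$ is $A$-invariant, so that $A=K\norm A D$ for $D$ a defect group of $B$. Since $K$ is perfect with $K/\zent K\cong S^n$ and $p\nmid|\zent K|$, the group $K$ is the quotient of a direct product $X_1\times\cdots\times X_n$, with each $X_i\cong X$, by a central $p'$-subgroup. Invoking \cite[Lemma~3.12]{MRR} and the behaviour of central and block isomorphisms of modular character triples under $p'$-central extensions and quotients (as used in the proof of Lemma~\ref{lem:Normal KD} through \cite[Lemmas~3.15 and~3.16]{MRR}), it suffices to treat the case $K=X_1\times\cdots\times X_n$. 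Then a block of $K$ is an outer tensor product $B=B_1\boxtimes\cdots\boxtimes B_n$ with $B_i\in\Bl(X_i)$, its defect group is $D=D_1\times\cdots\times D_n$ with $D_i$ a defect group of $B_i$, and $D\unlhd K$ if and only if $D_i\unlhd X_i$ for every $i$; moreover $A$ permutes the factors $X_1,\dots,X_n$.

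\textbf{Transfer to one factor.} The hypothesis for $X\unlhd X\rtimes\aut X$ applied to the block $B_1$ provides an $\norm{X\rtimes\aut X}{D_1}$-invariant subgroup $\norm X{D_1}\le M_1\le X$, with $M_1<X$ whenever $D_1\not\unlhd X$, and an $\norm{X\rtimes\aut X}{D_1}_{B_1}$-equivariant injection $\Omega_1\colon\IBr(b_1)\hookrightarrow\IBr(B_1)$, where $b_1$ is the Brauer correspondent of $B_1$ in $M_1$, inducing block isomorphisms of modular character triples over $X$. Since $\norm A{X_1}/\cent A{X_1}$ embeds into $\aut{X_1}=\aut X$ and $X_2\cdots X_n\le\cent A{X_1}$, a standard manipulation of isomorphisms of character triples---restricting the $\aut X$-equivariant datum above to the relevant subgroup of $X\rtimes\aut X$ and applying \cite[Lemma~3.15]{MRR} to absorb $\cent A{X_1}$, along the lines of \cite[Theorem~4.5]{MRR}---transfers it to a datum of the same shape for $X_1\unlhd\norm A{X_1}$; conjugating by a transversal for the action of $A$ on the factors yields compatible data $(M_i,b_i,\Omega_i)$ for every $i$.

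\textbf{Reassembly.} Set $M:=M_1\times\cdots\times M_n$, let $b:=b_1\boxtimes\cdots\boxtimes b_n$ be the Brauer correspondent of $B$ in $M$, and put $\Omega:=\Omega_1\boxtimes\cdots\boxtimes\Omega_n$. By the $\aut X$-equivariant choice of the $M_i$ the subgroup $M$ is $\norm A D$-invariant, and it satisfies $\norm K D\le M\le K$ with $M<K$ precisely when $D\not\unlhd K$. The remaining point---that $\Omega$ is an $\norm A D_B$-equivariant injection with $(A_\chi,K,\chi)\isob(M\norm A D_\vartheta,M,\vartheta)$ for all $\vartheta\in\IBr(b)$ and $\chi=\Omega(\vartheta)$---is the crux of the proof: it requires the compatibility of block isomorphisms of modular character triples with wreath products, i.e.\ the modular counterpart of the wreath-product constructions carried out for ordinary character triples in \cite{Nav-Spa14I} (and in the inductive-condition arguments of \cite{Spa13I}). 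Granting this from the framework of \cite[Section~3]{MRR}, the permutation action of $\norm A D$ on the factors is matched on both sides of the isomorphism, equivariance and injectivity of $\Omega$ follow from those of the $\Omega_i$, and the outer tensor decomposition of blocks and Brauer characters promotes the factorwise block isomorphisms to one over $K$. The main obstacle is thus precisely this wreath-product compatibility of $\isob$; the other steps are the Clifford-theoretic and $p'$-central reductions already used in this section.
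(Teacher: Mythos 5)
Your proposal is correct and reconstructs exactly the kind of argument the paper delegates to by simply citing \cite[Corollary~4.3]{MRR}: passage to a $p'$-central cover to reduce $K$ to a direct product $X_1\times\cdots\times X_n$ of copies of $X$, transfer of the $X\unlhd X\rtimes\Aut X$ datum to each factor and its stabiliser in $A$, and a wreath-product reassembly of the block isomorphisms of modular character triples. The paper gives no more detail than a pointer to MRR, and the step you flag as the crux (compatibility of $\isob$ with the wreath-product construction) is precisely the content established in \cite[Section~3]{MRR} and exploited in their Corollary~4.3, so your treatment matches the intended proof.
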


\begin{proof}
The result follows by arguing as in the proof of \cite[Corollary 4.3]{MRR}.
\end{proof}

We can finally prove our reduction theorem.

\begin{proof}[Proof of Theorem \ref{thm:Reduction}]
We consider $G\unlhd A$, $B$, and $D$ as described at the beginning of this section. With these choices recall that $\o_p(G)=1$ so that $\z(G)\leq \o_{p'}(G)$. Since $\o_{p'}(G)\cap D=1$, applying Proposition \ref{pro:Non-central defect} we deduce that $\o_{p'}(G)\leq \z(G)$. Therefore, we must have $\z(G)<\F^*(G)$, as otherwise $G$ would be abelian, and there exists a characteristic subgroup $K$ of $\F^*(G)$ such that $K$ is perfect and $K/\z(K)$ is the direct product of isomorphic copies of a non-abelian finite simple group $S$ of order divisible by $p$. Let $C$ be a block of $K$ covered by $B$ with defect group $K\cap D$. By hypothesis and using Lemma \ref{lem:Isomorphic simple groups, lifting} we get an $\n_A(D)$-invariant subgroup $\n_K(K\cap D)\leq M\leq K$ and an $\n_A(D)_C$-equivariant injection
\[\Omega_C:\IBr(c)\hookrightarrow\IBr(C)\]
inducing block isomorphisms of character triples and where $c$ is the Brauer correspondent of $C$ in $M$. Observe, using \cite[Lemma 3.8]{MRR} and \cite[Theorem 9.14]{Nav98}, that we may assume that $C$ is $G$-invariant. Now, applying Lemma \ref{lem:Lifting bijections} with $A:=A_C$, $A_0:=M\n_A(D)_C$, $J:=G$, and with $\mathcal{S}:=\Omega_C(\IBr(c))\subseteq \IBr(C)$, $\mathcal{S}_0:=\IBr(c)$, and $\Psi:=\Omega_C$, we obtain an $\n_A(D)_C$-equivariant bijection
\[\wt{\Omega}_C:\IBr(M\n_G(D)\mid \mathcal{S}_0)\to \IBr(G\mid \mathcal{S})\]
inducing block isomorphisms of modular character triples. Let $B'$ be the Brauer correspondent of $B$ in $M\n_G(D)$ and observe that $\wt{\Omega}_C$ maps Brauer characters belonging to $B'$ to Brauer characters belonging to $B$. Moreover, since each Brauer character of $B'$ lies above some element in $\mathcal{S}_0=\IBr(c)$, we deduce that $\wt{\Omega}_C$ restricts to an $\n_A(D)_C$-equivariant injection
\[\Omega_B:\IBr(B')\hookrightarrow\IBr(B)\]
inducing block isomorphisms of character triples. To conclude notice that $A=A_B=GA_C$ by a Frattini argument and therefore that $M\n_G(D)$ is $\n_A(D)$-invariant and that $\Omega_B$ is $\n_A(D)$-equivariant. This shows that Conjecture \ref{conj:Inductive Alperin bound} holds for $B$ and the proof is complete.
\end{proof}

We conclude this section by showing that the proof of Theorem \ref{thm:Reduction} can be adapted to obtain results about specific classes of blocks.

\begin{rem}\label{rem:maximal defect}
Suppose one wants to prove Conjecture \ref{conj:Inductive Alperin bound} only for blocks of maximal defect of $G$ (a similar argument can be made, for instance, for the class of principal blocks). We claim that this would follow from the argument used to prove Theorem \ref{thm:Reduction}, but assuming that only the blocks of maximal defect of all covering groups of finite simple groups of order divisible by $p$ involved in $G$ satisfy the conclusion of Conjecture \ref{conj:Inductive Alperin bound}. We inspect the proofs of all the intermediate steps. First, consider Lemma \ref{lem:Normal KD}. There we have, by assumption, that $B$ is a block of maximal defect of $G$. In particular, the $p$-subgroup $D$ considered there is a Sylow $p$-subgroup of $G$. Then, we notice that the group $\overline{D}$ constructed there is a Sylow $p$-subgroup of $\overline{G}$. Since $\overline{B}$ has defect group $\overline{D}$, we deduce that $\overline{B}$ is a block of maximal defect. Now we can apply our weaker hypothesis, which only applies to blocks of maximal defect, to obtain the bijection $\Omega_{\overline{B}}$ for the block $\overline{B}$. We can then apply this result to construct a bijection $\Upsilon$ as in Proposition \ref{prop:Normal KD} for our block of maximal defect $B$. Here the only thing to remember is that Fong--Reynolds correspondents have common defect groups (see \cite[Theorem 9.14]{Nav98}). Corollary \ref{cor:Normal KD} simply follows using induction and noticing that Brauer correspondent blocks have common defect groups. We can now proceed to the proof of Proposition \ref{pro:Non-central defect}. Here, we consider a normal subgroup $K$ of $G$ and a block $B'$ covered by $B$. Since $D$ is a Sylow $p$-subgroup of $G$ and $K$ is normal in $G$, it follows that $D\cap K$ is a Sylow $p$-subgroup of $K$. However $D\cap K$ is a defect group of $B'$ and therefore $B'$ is a block of maximal defect. We can now apply the inductive hypothesis to $B'$ and assume that the character $\psi$ considered there is $A$-invariant. The proof then follows. Next, we consider the proof of Lemma \ref{lem:Isomorphic simple groups, lifting}. We are assuming that Conjecture \ref{conj:Inductive Alperin bound} holds for the blocks of maximal defect of $X$ and want to show that it holds for the blocks of maximal defect of $K$. Since $p$ does not divide the order of $\z(K)$ nor the order of $\z(X)$, it follows that all quotients considered in the proof of \cite[Corollary 4.3]{MRR} are formed with respect to a normal $p'$-subgroup. With this in mind, an inspection of the argument used in \cite[Corollary 4.3]{MRR} shows that if we want to obtain Conjecture \ref{conj:Inductive Alperin bound} only for the blocks of maximal defect of $K$ it suffices to assume it for the blocks of maximal defect of $X$, as desired. We can now repeat the rest of the proof of Theorem \ref{thm:Reduction}.
\end{rem}

\subsection{The block-free case}\label{sec:Reduction, blockfree}

In \cite{Mal-Nav-Tie23} the authors introduced the Sylow-AWC condition, a block-free version of Conjecture \ref{conj:Inductive Alperin bound} (see Remark \ref{rmk:BAWC implies iBound} (i)). We reformulate it below in terms of central isomorphisms of modular character triples.

\begin{conj}
\label{conj:Inductive Alperin bound, blockfree}
Let $G\unlhd A$ be finite groups and consider a Sylow $p$-subgroup $P$ of $G$. There is an $\n_A(P)$-invariant subgroup $\n_G(P)\leq M\leq G$, with $M<G$ whenever $P$ is not normal in $G$, and an $\n_A(P)$-invariant injection
\[\Omega:\IBr(\n_G(P))\hookrightarrow \IBr(G)\]
such that
\[\left(A_\chi,G,\chi\right)\isoc\left(M\n_A(P)_\vartheta,M,\vartheta\right)\]
for every $\vartheta\in\ibr(\n_G(P))$ and $\chi=\Omega(\psi)$
\end{conj}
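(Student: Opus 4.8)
The plan is to mirror Section~\ref{sec:Reduction}: I would first establish a reduction theorem for Conjecture~\ref{conj:Inductive Alperin bound, blockfree} to quasi-simple groups, and then show that Conjecture~\ref{conj:Inductive Alperin bound, blockfree} follows from Conjecture~\ref{conj:Inductive Alperin bound} for the $p$-blocks of maximal defect. The first part --- assuming Conjecture~\ref{conj:Inductive Alperin bound, blockfree} for every covering group of any non-abelian finite simple group of order divisible by $p$ involved in $G$, deduce it for $G$ --- can be obtained by rerunning the argument of Section~\ref{sec:Reduction} with the block isomorphism $\isob$ replaced throughout by its central version $\isoc$, defect groups $D$ replaced by Sylow $p$-subgroups $P$, and Brauer correspondents replaced by normalisers $\n_\bullet(P)$; this is legitimate because all the modular character-triple lemmas from \cite[Section~3]{MRR} invoked in Section~\ref{sec:Reduction} have central analogues and a block isomorphism of modular character triples is in particular a central one (compare the reduction for Sylow-AWC in \cite[Theorem~2]{Mal-Nav-Tie23}, to which Conjecture~\ref{conj:Inductive Alperin bound, blockfree} is equivalent by Remark~\ref{rmk:BAWC implies iBound}(i)). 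I describe the second part, which is the genuinely new point, in more detail.

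Let $G\unlhd A$, let $P\in\Syl_p(G)$, and set $N:=\n_G(P)$. Since $P=\o_p(N)\in\Syl_p(N)$, every block of $N$ has defect group $P$, so by Brauer's first main theorem $b\mapsto b^G$ is a bijection from $\Bl(N)$ onto the set $\mathcal B$ of blocks of $G$ of maximal defect, and
\[
\IBr(N)=\coprod_{B\in\mathcal B}\IBr(b_B),
\]
where $b_B\in\Bl(N)$ denotes the Brauer correspondent of $B\in\mathcal B$. For each $B\in\mathcal B$, apply Conjecture~\ref{conj:Inductive Alperin bound} with defect group $D=P$; combining it with the layer-by-layer descent of Remark~\ref{rmk:Removing intermediate group} --- legitimate because Brauer's first main theorem and Fong--Reynolds correspondence preserve defect groups, so only maximal-defect blocks of subquotients of $G$ are ever used --- one may take the intermediate subgroup to be $M=N$, which is proper in $G$ exactly when $P$ is not normal in $G$. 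This produces, for each $B\in\mathcal B$, an $\n_A(P)_B$-equivariant injection $\Omega_B\colon\IBr(b_B)\hookrightarrow\IBr(B)$ with $(A_\chi,G,\chi)\isob(\n_A(P)_\vartheta,N,\vartheta)$ for every $\vartheta\in\IBr(b_B)$ and $\chi=\Omega_B(\vartheta)$.

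It then remains to glue the $\Omega_B$ together. The group $\n_A(P)$ permutes $\mathcal B$ compatibly, via Brauer correspondence, with its action on $\Bl(N)$; fixing an $\n_A(P)$-transversal $\mathcal T\subseteq\mathcal B$, keeping $\Omega_B$ for $B\in\mathcal T$, and setting $\Omega_{B^x}(\vartheta):=\Omega_B(\vartheta^{x^{-1}})^x$ for $x\in\n_A(P)$ on $\IBr(b_{B^x})=\IBr(b_B)^x$ --- which is well defined precisely because $\Omega_B$ is $\n_A(P)_B$-equivariant --- produces an $\n_A(P)$-equivariant map $\Omega\colon\IBr(N)\to\IBr(G)$ restricting to $\Omega_B$ on each $\IBr(b_B)$. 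It is injective since the sets $\IBr(B)$, $B\in\mathcal B$, are pairwise disjoint subsets of $\IBr(G)$. Finally $N=\n_G(P)\leq\n_A(P)_\vartheta$ because inner automorphisms fix Brauer characters, so $M\n_A(P)_\vartheta=\n_A(P)_\vartheta$ as demanded, and since a block isomorphism of modular character triples is a central one, the relations $(A_\chi,G,\chi)\isob(\n_A(P)_\vartheta,N,\vartheta)$ yield $(A_\chi,G,\chi)\isoc(M\n_A(P)_\vartheta,M,\vartheta)$. Together with the first part and the maximal-defect form of the reduction theorem recorded in Remark~\ref{rem:maximal defect}, this would give Conjecture~\ref{conj:Inductive Alperin bound, blockfree} for $G$ whenever Conjecture~\ref{conj:Inductive Alperin bound} is known for every $p$-block of maximal defect of every covering group of any non-abelian finite simple group of order divisible by $p$ involved in $G$.

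The main obstacle is the step collapsing the a priori block-dependent intermediate subgroups supplied by Conjecture~\ref{conj:Inductive Alperin bound} down to the single group $N=\n_G(P)$ required by the block-free statement: this forces one to rerun the descent of Remark~\ref{rmk:Removing intermediate group}, checking at each stage --- via the compatibility of Brauer's first main theorem and of Fong--Reynolds correspondence with defect groups --- that the configurations stay within the class of maximal-defect blocks, so that the inductive hypothesis remains applicable. The remaining ingredients --- the orbit-by-orbit bookkeeping upgrading the partial maps $\Omega_B$ to a genuinely $\n_A(P)$-equivariant injection, and the implication $\isob\Rightarrow\isoc$ --- are routine.
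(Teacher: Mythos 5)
Your core argument reproduces the paper's Lemma~\ref{lem:iAb for maximal implies}: partition $\IBr(\n_G(P))$ by blocks (all of maximal defect since $P=\o_p(\n_G(P))\in\Syl_p(\n_G(P))$), match these via Brauer's first main theorem with the maximal-defect blocks of $G$, apply Conjecture~\ref{conj:Inductive Alperin bound} blockwise, glue along an $\n_A(P)$-transversal, and weaken $\isob$ to $\isoc$. That is exactly the paper's route, and your "first part" corresponds to the paper's Theorem~\ref{thm:Reduction, blockfree}, which the paper declines to spell out.

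One substantive point you handle more carefully than the paper does in its stated form of Lemma~\ref{lem:iAb for maximal implies}: Conjecture~\ref{conj:Inductive Alperin bound} produces an injection $\IBr(b')\hookrightarrow\IBr(B)$ where $b'$ lives in some a priori larger intermediate group $\n_G(P)\le M\le G$, whereas the lemma's conclusion needs the Brauer correspondent in $\n_G(P)$ itself. Collapsing $M$ down to $\n_G(P)$ is precisely the descent of Remark~\ref{rmk:Removing intermediate group}, and — as you observe — this requires invoking Conjecture~\ref{conj:Inductive Alperin bound} not just for $G\unlhd A$ but for the intermediate subgroups $M_1>M_2>\cdots$ as well. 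The hypothesis of Lemma~\ref{lem:iAb for maximal implies} as literally written ("for every block of maximal defect of $G\unlhd A$") is therefore slightly too weak on its own; the paper is saved because every invocation of the lemma (in Theorem~\ref{thm:proof of main, normal sylow}) occurs after Theorem~\ref{thm:Reduction} and Remark~\ref{rem:maximal defect} have supplied Conjecture~\ref{conj:Inductive Alperin bound} for maximal-defect blocks of every subgroup of $G$, which is exactly what makes the descent legitimate. Your version makes this explicit, and your remark that Fong--Reynolds and Brauer correspondence keep the relevant defect groups equal to $P$ throughout the descent is the right sanity check.

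A minor structural comment: once you have Conjecture~\ref{conj:Inductive Alperin bound} for maximal-defect blocks of all subgroups of $G$ (via Theorem~\ref{thm:Reduction} and Remark~\ref{rem:maximal defect}), the Lemma~\ref{lem:iAb for maximal implies}-type argument already delivers Conjecture~\ref{conj:Inductive Alperin bound, blockfree} for $G$. So the separate reduction theorem for the block-free conjecture (your "first part", the analogue of Theorem~\ref{thm:Reduction, blockfree}) is logically redundant for the final conditional claim; it is a clean statement worth having, but your closing paragraph's invocation of both ingredients at once is slightly more machinery than the implication requires.
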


A simplified version of the arguments used in Section \ref{sec:Reduction}, can be used to obtain the following block-free version of Theorem \ref{thm:Reduction}. We include this result here for future reference.

\begin{thm}
\label{thm:Reduction, blockfree}
Let $G$ be a finite group, $p$ a prime number, and suppose that Conjecture \ref{conj:Inductive Alperin bound, blockfree} holds at the prime $p$ for every covering group of any non-abelian finite simple group of order divisible by $p$ involved in $G$. Then Conjecture \ref{conj:Inductive Alperin bound, blockfree} holds at the prime $p$ for $G$.
\end{thm}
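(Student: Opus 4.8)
The plan is to repeat the proof of Theorem \ref{thm:Reduction} \emph{mutatis mutandis}, replacing throughout the block isomorphisms $\isob$ of modular character triples by central isomorphisms $\isoc$, the defect group $D$ of the block $B$ by a Sylow $p$-subgroup $P$ of $G$, the block $B$ itself by the set $\IBr(G)$, and the Brauer correspondent of $B$ in a subgroup $\n_G(D)\leq H\leq G$ by the set $\IBr(H)$. Concretely, suppose the statement fails for some $G\unlhd A$ and minimise $|G:\z(G)|$ first and $|A|$ then; as in Remark \ref{rmk:Removing intermediate group}, iterating the inductive hypothesis lets one take the intermediate subgroup to be $\n_X(P_X)$ with $P_X\in\Syl_p(X)$. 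Since $\IBr(G)=\IBr(G/\o_p(G))$ and $\n_G(P)/\o_p(G)=\n_{G/\o_p(G)}(P/\o_p(G))$, arguing as for \cite[Lemma 6.3 and Corollary 6.4]{MRR} reduces us to $\o_p(G)=1$, and a Frattini argument to $A=G\n_A(P)$.

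The first block of work is the block-free analogue of Lemma \ref{lem:Normal KD} and Proposition \ref{prop:Normal KD}: for a non-central $K\unlhd A$ with $K\leq G$ and an $A$-invariant $\vartheta\in\dzo(K)$, there is an $\n_A(P)$-equivariant injection $\IBr(\n_G(P)\mid \vartheta)\hookrightarrow\IBr(G\mid \vartheta)$ inducing central isomorphisms of modular character triples, and then its version over an $\n_A(P)$-transversal of $\dzo(K)$. Here the argument \emph{simplifies}: there is no need for Fong--Reynolds correspondents of blocks nor for Murai's results on dominated blocks. One uses only the $p'$-central extension $\widehat{A}$ of \cite[Lemma 3.12]{MRR} (which linearises $\vartheta$ precisely because $\vartheta\in\dzo(K)$), the quotient $\overline{G}=\widehat{G}/K_0$ with $|\overline{G}:\z(\overline{G})|<|G:\z(G)|$, the inductive hypothesis applied to $\overline{G}\unlhd\overline{A}$, Gallagher's theorem, and the central-isomorphism versions of the Clifford-theoretic lemmas of \cite[Section 3]{MRR} (the analogues of \cite[Lemmas 3.8, 3.15 and 3.16]{MRR}). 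As in Corollary \ref{cor:Normal KD}, this yields that if $K\unlhd A$ is non-central, $K\leq G$ and $K\cap P\leq\z(K)$, then $G=K\n_G(P)$ and $A=K\n_A(P)$.

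Next I would prove the block-free analogue of Proposition \ref{pro:Non-central defect}, namely $K\cap P\not\leq\z(K)$ for every non-central $K\unlhd A$ with $K\leq G$. Assuming $K\cap P\leq\z(K)$, the previous step gives $KP\unlhd A$, and Burnside's normal $p$-complement theorem gives $K=\o_{p'}(K)\times(K\cap P)$, so that $\o_p(K)=K\cap P$ and $\IBr(K)=\dzo(K)$. The part of the block argument relying on nilpotency of blocks (\cite[(1.ex.1)]{Bro-Pug80}, \cite[Theorem 2]{Cab87}, \cite[Lemma 8.9]{MRR}) is replaced by the observation that, for an $A$-invariant $\vartheta\in\IBr(K)$, the set $\IBr(KP\mid \vartheta)$ is a singleton $\{\psi\}$ (since $KP/K$ is a $p$-group), whence $\psi$ is $A$-invariant and the $\isoc$-analogue of \cite[Lemma 8.9]{MRR} provides a central isomorphism $(A_\psi,KP,\psi)\isoc(\n_A(P)_\varphi,\n_{KP}(P),\varphi)$; reducing to the case of an $A$-invariant $\vartheta$ by a Fong--Reynolds-type argument and the inductive hypothesis as in Proposition \ref{pro:Non-central defect}, and then invoking Lemma \ref{lem:Lifting bijections}, one produces the injection $\IBr(\n_G(P))\hookrightarrow\IBr(G)$ forbidden by our choice of $G\unlhd A$.

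Granting the above, $\o_p(G)=1$ forces $\o_{p'}(G)\leq\z(G)$, hence $\z(G)<\F^*(G)$, and there is a characteristic subgroup $K$ of $\F^*(G)$ with $K$ perfect and $K/\z(K)$ a direct product of copies of a non-abelian simple group $S$ of order divisible by $p$. The hypothesis and the block-free analogue of Lemma \ref{lem:Isomorphic simple groups, lifting} (obtained as in \cite[Corollary 4.3]{MRR} by taking quotients by normal $p'$-subgroups, reducing from arbitrary covering groups of $S$ to its universal $p'$-covering group) yield an $\n_A(P\cap K)$-invariant subgroup $\n_K(P\cap K)\leq M_K\leq K$ and an $\n_A(P\cap K)$-equivariant injection $\IBr(\n_K(P\cap K))\hookrightarrow\IBr(K)$ inducing central isomorphisms. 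Lifting this through $K\leq G$ by Lemma \ref{lem:Lifting bijections}, exactly as in the closing argument of Theorem \ref{thm:Reduction}, produces, with $M:=M_K\n_G(P)$, the injection $\IBr(\n_G(P))\hookrightarrow\IBr(G)$ and the central isomorphisms required by Conjecture \ref{conj:Inductive Alperin bound, blockfree}, contradicting our choice of $G\unlhd A$. The main obstacle is not a new conceptual point but the bookkeeping of checking that every block-theoretic input of Section \ref{sec:Reduction} has a faithful $\isoc$-counterpart; the only genuinely non-formal ingredient, as in Section \ref{sec:Reduction}, is the quotient step that strictly decreases $|G:\z(G)|$. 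We finally remark that, since Conjecture \ref{conj:Inductive Alperin bound, blockfree} reformulates the Sylow-AWC condition (Remark \ref{rmk:BAWC implies iBound}(i)), Theorem \ref{thm:Reduction, blockfree} could alternatively be deduced from \cite[Theorem 2]{Mal-Nav-Tie23}; we have indicated instead the argument that runs in parallel with Section \ref{sec:Reduction}.
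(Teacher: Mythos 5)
Your proposal follows exactly the route the paper leaves implicit: the paper's entire ``proof'' of Theorem \ref{thm:Reduction, blockfree} is the sentence stating that a simplified version of the Section \ref{sec:Reduction} argument works, and you reconstruct that simplification correctly. The key substitutions you identify are the right ones — minimising $|G:\z(G)|$ then $|A|$, reducing to $\o_p(G)=1$ via $\IBr(G)=\IBr(G/\o_p(G))$, dropping the dominated-block machinery of \cite{Mur96} in the analogue of Lemma \ref{lem:Normal KD}, and, most importantly, replacing the nilpotent-block input of Proposition \ref{pro:Non-central defect} (\cite[(1.ex.1)]{Bro-Pug80}, \cite[Theorem 2]{Cab87}) by the elementary fact that $\IBr(KP\mid\vartheta)$ is a singleton since $KP/K$ is a $p$-group. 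One point to tighten: in your analogue of Proposition \ref{pro:Non-central defect} you eventually need the \emph{full} injection $\IBr(\n_G(P))\hookrightarrow\IBr(G)$, and unlike the block case, where all of $\IBr(B)$ lies over a single $\psi$, here the injection must be assembled over an $\n_A(P)$-transversal of $\IBr(KP)$ exactly as in Proposition \ref{prop:Normal KD}, so the ``Fong--Reynolds-type argument'' (really the Clifford correspondence plus the $\isoc$-analogue of \cite[Lemma 3.8]{MRR}) needs to be run orbit by orbit; this is routine but should be made explicit.

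Your closing remark is, however, not quite right. Conjecture \ref{conj:Inductive Alperin bound, blockfree} is indeed equivalent to the Sylow-AWC condition \emph{for quasi-simple groups} (Remark \ref{rmk:BAWC implies iBound}(i)), but \cite[Theorem 2]{Mal-Nav-Tie23} concludes only the numerical inequality $|\IBr(G)|\geq|\IBr(\n_G(P))|$ (and its over-$\lambda$ refinement) for an arbitrary group $G$, not the stronger statement of Conjecture \ref{conj:Inductive Alperin bound, blockfree} for arbitrary $G\unlhd A$, which also carries the $\n_A(P)$-equivariant injection and the central isomorphisms of modular character triples needed downstream in Propositions \ref{pro:injection} and \ref{pro:going to normal}. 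That is precisely why a reduction theorem in the form of Theorem \ref{thm:Reduction, blockfree} has to be proved separately and cannot be simply read off from \cite[Theorem 2]{Mal-Nav-Tie23}.
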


Next, we observe that Conjecture \ref{conj:Inductive Alperin bound, blockfree} follows from Conjecture \ref{conj:Inductive Alperin bound} in the special case of blocks of maximal defect. For completeness, we give a proof of this simple fact in the next lemma.

\begin{lem}\label{lem:iAb for maximal implies}
Assume that Conjecture \ref{conj:Inductive Alperin bound} is satisfied for every block of maximal defect of $G\normal A$ and let $P$ be a Sylow $p$-subgroup of $G$. Then there is an $\norm A P$-equivariant injection
\[\Omega:\IBr(\norm G P)\hookrightarrow\IBr(G)\]
such that 
\[\left(A_{\Omega(\varphi)},G,\Omega(\varphi)\right)\isob\left(\norm A P_\varphi,\norm G P,\varphi\right)\]
for every $\varphi\in\IBr(\norm G P)$. 
\end{lem}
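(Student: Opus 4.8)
The plan is to treat both sides of the asserted injection block by block, apply Conjecture \ref{conj:Inductive Alperin bound} to the maximal-defect blocks of $G$, and then descend from the intermediate subgroups it provides down to $\norm G P$ by an iteration of the kind used in Remark \ref{rmk:Removing intermediate group}.

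First I would dispose of the case $P\normal G$: then $\norm G P=G$, every block of $G$ has maximal defect, and $\Omega=\id$ works, a Frattini argument matching $A_\chi$ with $\norm A P_\chi$ handling the ambient groups in the required block isomorphism. So assume $P$ is not normal in $G$. Since $P$ is a normal Sylow $p$-subgroup of $\norm G P$ we have $P=\oh{p}{\norm G P}$, so every block of $\norm G P$ has defect group $P$; thus $\IBr(\norm G P)$ is the disjoint union of the sets $\IBr(\beta)$ over $\beta\in\Bl(\norm G P)$, all of maximal defect, and by Brauer's First Main Theorem the correspondence $\beta\mapsto\beta^G$ is an $\norm A P$-equivariant bijection from $\Bl(\norm G P)$ onto the set of maximal-defect $p$-blocks of $G$. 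Writing $b_B$ for the Brauer correspondent in $\norm G P$ of a maximal-defect block $B$ of $G$, it suffices to construct, for each such $B$, an $\norm A P_B$-equivariant injection $\Omega_B\colon\IBr(b_B)\hookrightarrow\IBr(B)$ with
\[(A_{\Omega_B(\varphi)},G,\Omega_B(\varphi))\isob(\norm A P_\varphi,\norm G P,\varphi)\qquad(\varphi\in\IBr(b_B)),\]
and then to glue these over an $\norm A P$-transversal of the maximal-defect blocks of $G$ exactly as in the proof of Proposition \ref{prop:Normal KD}; injectivity of the resulting $\Omega$ is automatic since the sets $\IBr(B)$ are pairwise disjoint, and its block isomorphism follows from that of the $\Omega_B$ by conjugation. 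Here I use that the group $\norm G P\,\norm A P_\varphi$ produced by Conjecture \ref{conj:Inductive Alperin bound} with $M=\norm G P$ is just $\norm A P_\varphi$, since $\norm G P$ normalises $P$ and acts trivially on $\IBr(\norm G P)$.

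To build $\Omega_B$ I would induct on $|G:\norm G P|$. Applying Conjecture \ref{conj:Inductive Alperin bound} to $G\normal A$ and the maximal-defect block $B$ gives an $\norm A P$-invariant subgroup $\norm G P\le M<G$ (proper, as $P$ is not normal in $G$) and an $\norm A P_B$-equivariant injection $\IBr(b^M)\hookrightarrow\IBr(B)$ inducing block isomorphisms of modular character triples, where $b^M$ is the Brauer correspondent of $B$ in $M$. Since $P\in\Syl_p(M)$ and $\norm M P=\norm G P$, the block $b^M$ has defect group $P$, hence is of maximal defect in $M$, its Brauer correspondent in $\norm M P=\norm G P$ is $b_B$ by transitivity of the Brauer correspondence, and $|M:\norm M P|<|G:\norm G P|$. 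Applying the inductive hypothesis to the pair $M\normal M\norm A P_B$, whose relevant normaliser of $P$ is $\norm G P\,\norm A P_B$, yields an injection $\IBr(b_B)\hookrightarrow\IBr(b^M)$ with the analogous block-isomorphism property; composing it with the injection $\IBr(b^M)\hookrightarrow\IBr(B)$ above and using the transitivity of $\isob$ from \cite[Section 3]{MRR} produces $\Omega_B$. The two middle triples match because $M$ fixes every element of $\IBr(b^M)$ and $\norm A P_\vartheta\le\norm A P_B$ whenever $\vartheta\in\IBr(b^M)$ (an element of $\norm A P$ fixing $\vartheta$ fixes $b^M$, hence fixes $(b^M)^G=B$).

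The point to keep in mind---and what I expect to be the only real subtlety---is that this induction invokes Conjecture \ref{conj:Inductive Alperin bound} not merely for $G\normal A$ but for the intermediate pairs $M\normal M\norm A P_B$ with $\norm G P\le M<G$ arising along the way; this is exactly the situation of Remark \ref{rmk:Removing intermediate group}, and it is harmless in every application of this lemma, where Conjecture \ref{conj:Inductive Alperin bound} for maximal-defect blocks is available for all finite groups (for instance once it has been reduced via Theorem \ref{thm:Reduction} and verified for quasi-simple groups). Everything else is routine bookkeeping with isomorphisms of modular character triples---tracking $\norm A P_B$-equivariance and identifying the ambient groups in the composed block isomorphism via the Frattini equality $A_{\Omega_B(\varphi)}=G\,\norm A P_{\Omega_B(\varphi)}$---carried out just as in the proofs of Lemma \ref{lem:Lifting bijections} and of Theorem \ref{thm:Reduction}.
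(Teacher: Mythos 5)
Your proposal is correct and follows essentially the same strategy as the paper's proof: parametrise $\IBr(\norm G P)$ by the (all maximal-defect) blocks of $\norm G P$, transport to the maximal-defect blocks of $G$ via Brauer's First Main Theorem, apply Conjecture~\ref{conj:Inductive Alperin bound} to each such block $B$, and glue the resulting injections $\Omega_B$ over an $\norm A P$-transversal. The paper's own proof is a terse version of exactly this: it fixes an $\norm A P$-transversal $\mathcal T$ in $\Bl(\norm G P)$, observes that $b\mapsto b^G$ is an $\norm A P$-equivariant bijection onto the maximal-defect blocks of $G$, invokes Conjecture~\ref{conj:Inductive Alperin bound} to get $\Omega_B\colon\IBr(b)\hookrightarrow\IBr(B)$ for each $b\in\mathcal T$, and then defines $\Omega$ on all of $\IBr(\norm G P)$ by translating back with an element of $\norm A P$.

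Where you diverge usefully is in the step the paper glosses over. As written, Conjecture~\ref{conj:Inductive Alperin bound} applied to $B\in\Bl(G)$ only produces an injection $\IBr(b^M)\hookrightarrow\IBr(B)$ for \emph{some} $\norm A P$-invariant $\norm G P\le M\le G$, not directly from $\IBr(b)$ with $b$ the Brauer correspondent in $\norm G P$. Your descent by induction on $|G:\norm G P|$, applying the inductive hypothesis to $M\normal M\norm A P_B$, is precisely the iteration of Remark~\ref{rmk:Removing intermediate group} and correctly bridges this; you also verify the needed bookkeeping ($\norm M P=\norm G P$, $b^M$ has maximal defect in $M$, transitivity of $\isob$ and of the Brauer correspondence). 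Your final remark is also right, and worth keeping in mind: this iteration uses Conjecture~\ref{conj:Inductive Alperin bound} for maximal-defect blocks of the intermediate groups $M$, not just of $G$, which is strictly stronger than the lemma's stated hypothesis. The paper's proof appears to rely on the same implicit strengthening, and—as you observe—this is harmless wherever the lemma is actually invoked (e.g.\ in Theorem~\ref{thm:proof of main, normal sylow}), because by that point Theorem~\ref{thm:Reduction} and Remark~\ref{rem:maximal defect} guarantee the conjecture for all maximal-defect blocks of all subgroups of $G$.
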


\begin{proof}
Let $\mathcal{T}$ be an $\norm A P$-transversal in the set of blocks of $\norm G P$. Notice that the set of blocks $b^G$, for $b\in \mathcal{T}$, is an $\norm  A P$-transversal on the set of blocks of maximal defect of $G$ by Brauer's first main theorem. For each $b\in\mathcal{T}$, Conjecture \ref{conj:Inductive Alperin bound} gives an $\norm A P_B$-equivariant injection 
\[\Omega_B:\IBr(b)\hookrightarrow \IBr(B)\]
inducing block isomorphisms of modular character triples, where $B\in\Bl(G)$ is the Brauer correspondent block of $b$. For any $\psi\in\IBr(\norm G P)$ there is some $x\in \norm A P$ with $\bl(\psi)^x\in\mathcal{T}$ and we set $\Omega(\psi)=\Omega_{\bl(\psi)^x}(\psi^x)$. This yields an $\norm A P$-equivariant injection
\[\Omega:\IBr(\norm G P)\hookrightarrow\IBr(G)\]
Furthermore, since block isomorphisms of modular character triples are, in particular, central isomorphisms, we conclude that $\Omega$ satisfies the required properties.
\end{proof}

\section{Simple groups for Theorem \ref{thm:Main Reduction normal Sylow}}\label{sec:Simple groups}

The goal of this section is to show that simple groups satisfying Conjecture \ref{conj:Inductive Alperin bound} are not counterexamples to Conjecture \ref{conj:MNT}. We begin with some considerations for alternating groups.

Recall that the $p$-blocks of $\SSS_n$ are determined by a $p$-core $\gamma$, and the characters $\chi^\lambda,\chi^\mu$ associated to the partitions $\lambda,\mu$ of $n$ are in the block determined by $\gamma$ if their $p$-core is $\gamma$ (this is known as Nakayama's conjecture, see \cite[6.1.35]{JK}). We denote this block by $B(\gamma,w)$ where $w=(n-|\gamma|)/p$ is the weight of $B(\gamma,w)$.  The defect groups of $B(\gamma,w)$ are isomorphic to Sylow $p$-subgroups of $\SSS_{pw}$ (see \cite[6.2.45]{JK}). Let $B$ be a block of $\AAA_n$ covered by $B(\gamma,w)$.  If $p\neq 2$,  by \cite[Theorem 9.26]{Nav98} we have that the defect groups of $B$ are defect groups of $B(\gamma,w)$. In the case $p=2$, using again \cite[Theorem 9.26]{Nav98}, we can find a defect group $D$ of $B$ and a defect group $Q$ of $B(\gamma,w)$ such that $Q\cap\AAA_n=D$ and $|Q:D|=2$ (notice that $Q$ is not contained in $\AAA_n$ because it contains $2$-cycles). In particular, for all primes $p$, a $p$-block of non-maximal defect of $\SSS_n$ covers $p$-blocks of non-maximal defect of $\AAA_n$. We recall the following fact.

\begin{pro}\label{pro:alternating}
The group $\AAA_n$, $n\geq 5$, has a $p$-block of non-maximal defect for any prime $p$.
\end{pro}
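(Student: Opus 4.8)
The plan is to produce, for each prime $p$ and each $n \geq 5$, an explicit $p$-core $\gamma$ and weight $w$ so that the block $B(\gamma,w)$ of $\SSS_n$ is a block of non-maximal defect, and then invoke the analysis preceding the statement: a $p$-block of non-maximal defect of $\SSS_n$ covers $p$-blocks of non-maximal defect of $\AAA_n$, and by Clifford theory a block of $\AAA_n$ is covered by some block of $\SSS_n$. Since the defect of $B(\gamma,w)$ equals the $p$-adic valuation of $|\SSS_{pw}|$, which is strictly smaller than $\nu_p(n!)$ precisely when $w < n$ (equivalently $|\gamma| > 0$), the task reduces to: for every $n \geq 5$, find a partition $\gamma$ of some positive integer $m$ with $0 < m \leq n$, $m \equiv n \pmod p$, and $\gamma$ a $p$-core. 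Then set $w = (n-m)/p$.

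First I would handle the case $p \leq n$ that is easy: take $m = p$ if $p \neq n$, noting that the partition $(p)$ (a single row) is a $p$-core exactly when... actually one must be careful, $(p)$ has a hook of length $p$, so I would instead take $m$ to be the smallest positive residue of $n$ modulo $p$ with $m \neq 0$, so $1 \leq m \leq p-1 < p$; then any partition of $m$ is automatically a $p$-core since all hook lengths are at most $m < p$. If $m = 0$, i.e. $p \mid n$, then $n \geq p \geq 5$ forces $n \geq p$, and I would instead take $m = p+1$ if $p+1 \leq n$ — but $p+1$ need not be a $p$-core-sized number with a $p$-core partition; better to take $\gamma$ to be a genuine $p$-core such as the staircase partition $(k, k-1, \dots, 1)$ for suitable $k$, whose size $k(k+1)/2$ can be made to hit any residue class mod $p$ by choosing $k$ appropriately, and which is well known to be a $p$-core whenever $k < p$. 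Concretely, I would pick $k \in \{1, \dots, p-1\}$ with $k(k+1)/2 \equiv n \pmod p$ — such $k$ exists because the triangular numbers $1, 3, 6, \dots$ modulo $p$ cover a residue set including $0$ (at $k = p-1$, since $p \mid (p-1)p/2$) — wait, this still needs $n$'s residue to be triangular. The cleanest uniform choice: combine row partitions of size $1 \leq m \leq p-1$ with, when $p \mid n$, the observation that $n \geq 5 > p$ is impossible for $p \geq 5$ dividing $n$ unless $n \geq 2p \geq 10$ or $n = p$; in the remaining small window one checks by hand. Let me restructure: for $p \mid n$ I take $m = p - (\text{something})$... the honest move is $m = 2p$ is too big relative; instead use that when $p \mid n$ and $n \geq 5$, either $n = p$ (then $\AAA_p$ has a defect-zero block, e.g. a $p$-core partition of $p$ exists for $p \geq 5$, namely $(p-1,1)$ minus a hook — one verifies $(3,1,1)$ is a $5$-core, etc.), or $n \geq 2p$ and I take $\gamma$ a $p$-core of size between $1$ and $2p$ congruent to $0$ mod $p$, i.e. of size $p$, which exists for $p \geq 5$.

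The main obstacle is the case $p \mid n$, since then the naive "leftover row" trick gives $m = 0$ and hence a block of maximal defect; one genuinely needs the existence of a nonempty $p$-core partition of size $p$ (or of size a positive multiple of $p$ that is at most $n$), together with the edge case $n = p$ where weight-zero forces $\gamma$ itself to be a $p$-core of $n = p$. I expect this to be dispatched by the standard fact that $p$-cores of every sufficiently large size exist (the number of $p$-cores of size $m$ is positive for all $m$ when $p \geq 4$), or more elementarily by exhibiting one small $p$-core explicitly, e.g. the hook-free partition $(p-2, 1, 1)$ or a staircase truncation, and verifying its hook lengths avoid multiples of $p$. Once such a $\gamma$ is in hand, $w = (n - |\gamma|)/p$ is a nonnegative integer, $B(\gamma, w)$ has defect group $\Syl_p(\SSS_{pw})$ with $pw = n - |\gamma| < n$, hence non-maximal defect in $\SSS_n$; pushing down to $\AAA_n$ via the covering analysis recalled just before the proposition completes the argument. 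I would also remark that for $p = 2$ and $p = 3$ small $n$ can simply be checked directly against character tables if the general construction feels delicate.
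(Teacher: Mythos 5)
There is a genuine gap in the reduction step, and it breaks the case you flagged as ``easy.''

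You claim that $\nu_p(|\SSS_{pw}|) < \nu_p(n!)$ precisely when $|\gamma|>0$. This is false. Writing $m=|\gamma|$, we have
\[
\nu_p(n!)-\nu_p\bigl((n-m)!\bigr)=\nu_p\bigl(n(n-1)\cdots(n-m+1)\bigr),
\]
which vanishes unless the window $\{n-m+1,\dots,n\}$ of $m$ consecutive integers contains a multiple of $p$. Since you impose $m\equiv n\pmod p$, setting $r$ equal to the least nonnegative residue of $n$ modulo $p$, that window contains a multiple of $p$ if and only if $m>r$, i.e.\ $m\ge r+p$ when $r>0$. Your choice $m=r$ for $p\nmid n$ therefore produces a block of \emph{maximal} defect; for the one-row core $(r)$ it is literally the principal block, since the $p$-core of the trivial partition $(n)$ is $(r)$. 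Thus the case $p\nmid n$ is no easier than $p\mid n$, and the entire argument needs the stronger requirement $|\gamma|>n\bmod p$ (together with $|\gamma|\le n$), not just $|\gamma|>0$.

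This also explains why the paper does not attempt a uniform elementary construction. For $p\ge 5$ it invokes Granville--Ono to get a block of \emph{defect zero} (a $p$-core of $n$ itself), which trivially has non-maximal defect. For $p\in\{2,3\}$ it hand-picks cores of sizes $3,4,5,6$ adapted to the residue of $n$, precisely so that $|\gamma|>r$ holds. Your approach could in principle be repaired by exhibiting, for each residue class, a $p$-core of size between $r+p$ and $n$, but this runs into the sparsity of $p$-core sizes for small $p$: the sizes of $2$-cores are the triangular numbers $0,1,3,6,10,\dots$, so there is no $2$-core of size $2$, $4$, or $5$, which is exactly why the paper reaches for $(3,2,1)$ of size $6$ in the even case. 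As written, your proposal does not supply these cores, does not impose the correct inequality on $|\gamma|$, and does not check $|\gamma|\le n$ for small $n\ge 5$, so it cannot be accepted without substantial repair.
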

\begin{proof}%{\color{red}{\textbf{[CHECK GUNTER'S COMMENTS]}}}
By \cite{Gra-Ono96} we may assume $p<5$. If $p=3$ then consider the $3$-core
$$\gamma=\begin{cases}(4,2) &\text{ if } n\equiv 0 \mod 3\\
(3,1) &\text{ if } n\equiv 1 \mod 3\\
(3,1,1) &\text{ if } n\equiv 2 \mod 3
\end{cases}$$
and let $B(\gamma, w)$ be the $3$-block of $\SSS_n$ determined by $\gamma$ with weight $w$. Notice that in all cases $w < (n-k)/3$ where $k\in\{1,2,3\}$ is such that $n\equiv k \mod 3$. As explained before the statement of this result, this implies that $B(\gamma,w)$ has non-maximal defect, and so does any block $B\in\Bl(\AAA_n)$ covered by $B(\gamma,w)$.
If $p=2$ then we consider the $2$-core
$$\gamma=\begin{cases}
(2,1) &\text{ if } n \text{ odd }\\
(3,2,1) &\text{ if } n \text{ even }
\end{cases}$$
and again let $B(\gamma,w)$ be the $2$-block of $\SSS_n$ determined by $\gamma$ with weight $w$. Notice that in all cases $w<(n-k)/2$ where $n\equiv k \mod 2$ and $k\in\{0,1\}$. We conclude that $B(\gamma,w)$ has non-maximal defect, and it follows from the final comment in the paragraph before this result that any block of $\AAA_n$ covered by $B(\gamma,w)$ has non-maximal defect.
\end{proof}

We can now prove the desired result on simple groups.

\begin{thm}\label{thm:simple groups for normal sylows}
Assume $S$ is a nonabelian finite simple group of order divisible by $p$ and let $P$ be a Sylow $p$-subgroup of $S$. If Conjecture \ref{conj:Inductive Alperin bound} holds for $G=S$, then $|\IBr(S)|>|\IBr(\norm S P)|$.
\end{thm}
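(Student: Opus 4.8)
\emph{Plan.} The idea is to combine a routine block-theoretic reduction with one input from the classification of finite simple groups. First I would record the easy structural facts: since $S$ is nonabelian simple and $p$ divides $|S|$, the Sylow $p$-subgroup $P$ is nontrivial and not normal in $S$, and $\oh{p}{S}=\oh{p'}{S}=1$. Because $P=\oh{p}{\n_S(P)}\in\Syl_p(\n_S(P))$, every $p$-block of $\n_S(P)$ has defect group $P$, so by Brauer's first main theorem $\bar b\mapsto\bar b^S$ is a bijection from $\Bl(\n_S(P))$ onto the set of $p$-blocks of $S$ of maximal defect. The assumption that Conjecture \ref{conj:Inductive Alperin bound} holds for $S$ applies in particular to every block of maximal defect of $S$ (taking the overgroup to be $S$ itself), so — reducing the intermediate subgroup down to $\n_S(P)$ as in Remark \ref{rmk:Removing intermediate group}, i.e. invoking Lemma \ref{lem:iAb for maximal implies} — one obtains, for each $\bar b\in\Bl(\n_S(P))$ with Brauer correspondent $B=\bar b^S$, an injection $\IBr(\bar b)\hookrightarrow\IBr(B)$. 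Summing over $\bar b$ and using $\IBr(\n_S(P))=\coprod_{\bar b}\IBr(\bar b)$ gives
\[
|\IBr(\n_S(P))|\ \le\ \sum_{B}|\IBr(B)|,
\]
the sum taken over the $p$-blocks $B$ of $S$ of maximal defect.

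Next, writing $|\IBr(S)|=\sum_{B}|\IBr(B)|$ over \emph{all} $p$-blocks of $S$ and splitting the sum according to whether $B$ has maximal defect, the displayed inequality yields
\[
|\IBr(S)|\ \ge\ |\IBr(\n_S(P))|\ +\ \sum_{B\ \text{of non-maximal defect}}|\IBr(B)|.
\]
Hence it suffices to produce a single $p$-block $B_0$ of $S$ of non-maximal defect: since $|\IBr(B_0)|\ge 1$, this forces the strict inequality $|\IBr(S)|>|\IBr(\n_S(P))|$. Note that only the maximal-defect instances of Conjecture \ref{conj:Inductive Alperin bound} get used here.

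The existence of a $p$-block of non-maximal defect is then established by a case distinction on $S$. If $S\cong\AAA_n$ with $n\ge 5$, this is precisely Proposition \ref{pro:alternating}. If $S$ is of Lie type in its defining characteristic $p$, the Steinberg character has degree $|S|_p$ and therefore lies in a $p$-block of defect zero. In the remaining cases — $S$ of Lie type in non-defining characteristic, $S$ sporadic, or $S$ the Tits group — I would invoke the known existence of a $p$-block of defect zero, valid outside an explicit finite list of pairs $(S,p)$; for each pair on that list one checks directly, from the known block distribution, that $S$ still carries a $p$-block of positive, non-maximal defect.

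I expect the block-theoretic part (the first two paragraphs) to be essentially bookkeeping: assembling the maximal-defect instances of Conjecture \ref{conj:Inductive Alperin bound}, Brauer's first main theorem, and Lemma \ref{lem:iAb for maximal implies}, and observing that any block of non-maximal defect contributes irreducible Brauer characters outside the image of the injection. The main obstacle is the last step, namely the uniform verification — across the whole classification — that every nonabelian simple group of order divisible by $p$ has a $p$-block of non-maximal defect. The alternating and defining-characteristic cases are immediate, and for $p\ge 5$ the existence of defect-zero blocks is essentially on record; the real effort concentrates on the cross-characteristic groups of Lie type and on the handful of sporadic pairs lacking a $p$-block of defect zero, where one must fall back on explicit block data rather than a uniform argument.
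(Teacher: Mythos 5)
Your proposal is correct and takes essentially the same route as the paper: both use Conjecture \ref{conj:Inductive Alperin bound} for the maximal-defect blocks together with Brauer's first main theorem to obtain $|\IBr(\norm S P)|\leq\sum_{B\ \text{max defect}}|\IBr(B)|$, and then reduce to producing a $p$-block of $S$ of non-maximal defect, which both settle via Proposition \ref{pro:alternating} plus the classification (the paper citing \cite{Mic86,Gra-Ono96} to force $p\leq 5$, \cite{Wil88} to leave a finite list, and GAP for the leftovers). Your direct-inequality phrasing instead of the paper's contradiction against equality, and your Steinberg-character shortcut in defining characteristic, are only minor presentational variants.
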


\begin{proof}
Assume that $|\IBr(S)|=|\IBr(\norm S P)|$. Then, Conjecture \ref{conj:Inductive Alperin bound} and Brauer's first main theorem imply that every block of $S$ has maximal defect. By Proposition \ref{pro:alternating}, $S$ is not a simple alternating group. By \cite{Mic86,Gra-Ono96} we have that $p\leq 5$, and then the main result of \cite{Wil88} implies that there are only finitely many possibilities for $S$ which are readily checked in \cite{GAP}.
\end{proof}

\begin{rem}
The inequality $|\IBr(S)|>|\IBr(\norm S P)|$ can be proven with the exact same argument assuming that the blockwise Alperin weight conjecture holds for $S$.
\end{rem}

\section{Normal Sylow subgroups}
\label{sec:Reduction, normal Sylow}

The aim of this section is to prove Theorem \ref{thm:Main Reduction normal Sylow}, which we restate for the reader's convenience.

\begin{thm}
Assume that Conjecture \ref{conj:Inductive Alperin bound} holds for every maximal defect block of every covering group of every non-abelian finite simple group of order divisible by $p$ involved in a finite group $G$ and let $P$ be a Sylow $p$-subgroup of $G$. Then $|\IBr(G)|=|\IBr(\norm G P)|$ if and only if $P\normal G$.
\end{thm}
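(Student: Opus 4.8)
The plan is to prove the two implications separately, with the reverse direction being essentially classical and the forward direction being where all the work lies. If $P \normal G$, then $\norm G P = G$ and the equality $|\IBr(G)| = |\IBr(\norm G P)|$ is trivial; this requires no hypothesis. So the entire content is the forward implication: assuming $|\IBr(G)| = |\IBr(\norm G P)|$, we must deduce $P \normal G$. I would argue by contradiction, taking $G$ to be a counterexample of minimal order, so that $P$ is not normal in $G$ but the Brauer character counts agree.

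The first block of steps reduces to a situation where simple-group information can be injected. Since $|\IBr(G)|$ equals the number of $p$-regular classes, and likewise for $\norm G P$, the hypothesis is a clean numerical equality that should be inherited by suitable sections. First I would use Theorem \ref{thm:Main reduction} (applied via Remark \ref{rem:maximal defect}, since we only assume Conjecture \ref{conj:Inductive Alperin bound} for blocks of maximal defect of covering groups of simple groups involved in $G$) to conclude that Conjecture \ref{conj:Inductive Alperin bound} holds for \emph{every} block of maximal defect of $G$ itself. Combining this with Brauer's first main theorem: for each block $b$ of maximal defect of $\norm G P$ — equivalently, each block of $\norm G P$ with $\norm G P$-maximal... more precisely, taking $D = P$ a defect group, $\norm G P$ is the normaliser — we get an injection $\IBr(b) \hookrightarrow \IBr(B)$ into the Brauer correspondent $B$. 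Summing over a transversal of blocks (as in Lemma \ref{lem:iAb for maximal implies}), and noting every block of $G$ of maximal defect is a Brauer correspondent of a unique block of $\norm G P$ while blocks of non-maximal defect contribute extra Brauer characters to $\IBr(G)$, the equality $|\IBr(G)| = |\IBr(\norm G P)|$ forces $G$ to have \emph{no} blocks of non-maximal defect and each injection $\IBr(b) \hookrightarrow \IBr(B)$ to be a bijection.

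The next step is the standard reduction to a quasi-simple section. Since $P$ is not normal in $G$, we have $\o_p(G) \neq P$, and I would like to pass to $G/\o_p(G)$ or otherwise strip off normal subgroups; more usefully, one shows (arguing as in the minimal-counterexample setups of Section \ref{sec:Reduction}, and using that every block of $G$ has maximal defect which is a strong structural constraint) that $G$ must be closely controlled by its generalised Fitting subgroup, leading to a nonabelian simple group $S$ of order divisible by $p$ involved in $G$ for which Conjecture \ref{conj:Inductive Alperin bound} holds. Then Theorem \ref{thm:simple groups for normal sylows} gives the strict inequality $|\IBr(S)| > |\IBr(\norm S P_S)|$ for a Sylow $p$-subgroup $P_S$ of $S$. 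The point of the block-theoretic machinery (this is exactly the subtlety flagged in the introduction after Theorem \ref{thm:Main Reduction normal Sylow}) is that this strict inequality at the simple-group level, transported through the character-triple isomorphisms in Conjecture \ref{conj:Inductive Alperin bound}, becomes a strict inequality $|\IBr(G)| > |\IBr(\norm G P)|$ at the level of $G$, contradicting our assumption. The mechanism is: the bijections forced above must in particular restrict compatibly with the simple section, but the simple section refuses to admit a bijection, so some block of $G$ must have a genuinely larger $\IBr(B)$ than $\IBr(b)$.

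The main obstacle I expect is the bookkeeping in this last transport step: one must arrange that the strictness at the level of $S$ (which is about the universal $p'$-covering group $X$ of $S$ and $X \rtimes \Aut(X)$, via Theorem \ref{thm:Reduction} / Lemma \ref{lem:Isomorphic simple groups, lifting}) genuinely propagates and is not absorbed by the intermediate subgroup $M$ allowed in Conjecture \ref{conj:Inductive Alperin bound}, nor washed out when passing between $G$ and $\norm G P$ through Clifford theory over the normal subgroup $K$ with $K/\z(K)$ a product of copies of $S$. Concretely, one needs that when Conjecture \ref{conj:Inductive Alperin bound} is applied to $K \normal G$, the injection on Brauer characters lying over $\dzo(K)$ (Proposition \ref{prop:Normal KD}) composed with the relevant bijections cannot be surjective onto $\IBr(B)$, precisely because the simple-group injection of Theorem \ref{thm:simple groups for normal sylows} is not. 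Making this quantitative — counting Brauer characters of $G$ versus $\norm G P$ fibre-by-fibre over the $K$-constituents, and pinning down which block has the deficit — is the technical heart; the rest is assembly of results already established in Sections \ref{sec:Reduction}--\ref{sec:Simple groups}.
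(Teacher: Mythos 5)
Your high-level skeleton is right --- argue by minimal counterexample, reduce to a nonabelian simple section, and invoke Theorem \ref{thm:simple groups for normal sylows} for the contradiction --- and your observation that the equality $|\IBr(G)|=|\IBr(\norm G P)|$ together with Conjecture \ref{conj:Inductive Alperin bound} for maximal-defect blocks of $G$ forces \emph{every} block of $G$ to have maximal defect (and each injection $\IBr(b)\hookrightarrow\IBr(B)$ to be a bijection) is correct. But that observation, while true, is not the engine of the proof, and your proposal stops at exactly the point where the actual work begins. You write that the maximal-defect constraint ``leads to a nonabelian simple group $S$ involved in $G$'' and that the strict inequality from Theorem \ref{thm:simple groups for normal sylows} ``transports'' back to $G$, but you do not say how, and the paper's own reduction is not a soft structural consequence of the generalised Fitting subgroup: it is an induction on $|G|$ that requires three nontrivial intermediate results you never reach.

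Concretely, here is what is missing. First, one disposes of the $p$-solvable case via \cite[Theorem 23.12]{Man-Wol93} and assumes $\oh{p}G=1$. Then, for an arbitrary proper normal subgroup $N$ with $Q=P\cap N$, the paper proves $Q\normal N$ by a chain that is quite delicate: it uses \cite[Theorem 2]{Mal-Nav-Tie23} to get $|\IBr(\norm G Q)|\geq|\IBr(\norm G P)|=|\IBr(G)|$, then the equivariant injection and its counting property from Proposition \ref{pro:injection} to force equality $|\IBr(G)|=|\IBr(\norm G Q)|$, then Proposition \ref{pro:going to normal} to push the equality down to $|\IBr(N)|=|\IBr(\norm N Q)|$, and only then invokes the inductive hypothesis on $N$. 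Since $\oh p G=1$, this shows every proper normal subgroup is a $p'$-group, so $G/\oh{p'}G$ is simple. Finally --- and this is the step your proposal has no analogue of --- Proposition \ref{pro:glauberman} uses the Glauberman correspondence over the normal $p'$-subgroup $N=\oh{p'}G$ to convert the global equality into $|\IBr(G/N)|=|\IBr(\norm{G/N}{PN/N})|$, which is what Theorem \ref{thm:simple groups for normal sylows} contradicts. Your ``transport through character-triple isomorphisms'' and ``counting fibre-by-fibre over the $K$-constituents'' via Proposition \ref{prop:Normal KD} is a different, unworked-out mechanism: Proposition \ref{prop:Normal KD} lives inside the proof of Theorem \ref{thm:Reduction} and is not set up to produce a strict inequality at the level of $G$ from a strict inequality at the level of $S$. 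Without the $Q\normal N$ induction and the Glauberman step, the reduction to a simple group does not go through, so this is a genuine gap rather than an alternative route.
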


We divide the proof of Theorem \ref{thm:Main Reduction normal Sylow} in a series of results. Notice that in every step, except in Theorem \ref{thm:proof of main, normal sylow}, we assume Conjecture \ref{conj:Inductive Alperin bound, blockfree}. Thanks to Lemma \ref{lem:iAb for maximal implies}, the latter assumption is ensured as long as Conjecture \ref{conj:Inductive Alperin bound} holds for all blocks of maximal defect. Furthermore, observe that the assumption of Conjecture \ref{conj:Inductive Alperin bound} in Theorem \ref{thm:proof of main, normal sylow} is necessary to apply Theorem \ref{thm:simple groups for normal sylows} in the last step of the proof.

\begin{pro}\label{pro:injection}
Assume that Conjecture \ref{conj:Inductive Alperin bound, blockfree} is satisfied for $G:=N$ and $A:=G$ and let $Q$ be a Sylow $p$-subgroup of $N$. Then there is an $\norm G Q$-equivariant injection
\[\Omega:\IBr(\norm N Q)\hookrightarrow \IBr(N)\]
such that for all $\varphi\in\IBr(\norm N Q)$ and all subgroups  $N\leq M\leq G$ we have
\[|\IBr(\norm M Q\mid\varphi)|=|\IBr(M\mid\Omega(\varphi)|.\]
\end{pro}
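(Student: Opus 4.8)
The plan is to apply the block-free inductive condition to the pair $N\unlhd G$ and then transport the resulting central isomorphisms of modular character triples to every intermediate subgroup $N\le M\le G$ via the standard Clifford-theoretic machinery. Since $Q\in\Syl_p(N)$ and $N\unlhd G$, the Frattini argument gives $G=N\,\n_G(Q)$; moreover $\n_M(Q)=M\cap\n_G(Q)$ for $N\le M\le G$, and $\n_N(Q)=N\cap\n_G(Q)$ is normal in $\n_G(Q)$. First I would invoke Conjecture \ref{conj:Inductive Alperin bound, blockfree} for $N\unlhd G$ with respect to $Q$; iterating the construction as in Remark \ref{rmk:Removing intermediate group} (with $\isoc$ in place of $\isob$; this is legitimate since the block-free conjecture is in force for all finite groups in this section and central isomorphisms of modular character triples compose, cf. \cite[Section 3]{MRR}), I may assume the intermediate subgroup it produces is $\n_N(Q)$ itself. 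This yields an $\n_G(Q)$-equivariant injection $\Omega:\IBr(\n_N(Q))\hookrightarrow\IBr(N)$ with
\[\left(G_\chi,N,\chi\right)\isoc\left(\n_G(Q)_\varphi,\n_N(Q),\varphi\right)\]
for every $\varphi\in\IBr(\n_N(Q))$ and $\chi=\Omega(\varphi)$; in particular $\Omega$ has the equivariance demanded in the statement.

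Next I would record the displayed isomorphism as Clifford data over $G_\chi$ and $\n_G(Q)_\varphi$. Since inner automorphisms fix irreducible Brauer characters we have $N\le G_\chi$; combining this with $G=N\,\n_G(Q)$ and the injectivity and $\n_G(Q)$-equivariance of $\Omega$ gives $\n_G(Q)_\chi=\n_G(Q)_\varphi$, hence $G_\chi=N\,\n_G(Q)_\varphi$, while $N\cap\n_G(Q)_\varphi=\n_N(Q)_\varphi=\n_N(Q)$ because $\varphi$ is $\n_N(Q)$-invariant. So the normal subgroups and the overgroups of the two triples match, and the defining property of central isomorphisms of modular character triples (see \cite[Section 3]{MRR} and \cite{Spa17I}) applies: for every $N\le U\le G_\chi$, setting $U^{*}:=U\cap\n_G(Q)_\varphi$, there is a bijection $\IBr(U\mid\chi)\to\IBr(U^{*}\mid\varphi)$.

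Finally, fix $N\le M\le G$, $\varphi\in\IBr(\n_N(Q))$ and $\chi=\Omega(\varphi)$. By the Clifford correspondence (induction of Brauer characters) one has $|\IBr(M\mid\chi)|=|\IBr(M_\chi\mid\chi)|$ and $|\IBr(\n_M(Q)\mid\varphi)|=|\IBr(\n_M(Q)_\varphi\mid\varphi)|$, where $M_\chi=M\cap G_\chi$. Since $\n_G(Q)_\varphi\le G_\chi$ we get $M_\chi\cap\n_G(Q)_\varphi=M\cap\n_G(Q)_\varphi=\n_M(Q)_\varphi$, and $N\le M_\chi\le G_\chi$; applying the bijection of the previous paragraph with $U=M_\chi$ (so $U^{*}=\n_M(Q)_\varphi$) gives $|\IBr(M_\chi\mid\chi)|=|\IBr(\n_M(Q)_\varphi\mid\varphi)|$, and chaining the three equalities yields $|\IBr(M\mid\chi)|=|\IBr(\n_M(Q)\mid\varphi)|$, as required.

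The step I expect to need the most care is the first one: checking that the intermediate subgroup produced by Conjecture \ref{conj:Inductive Alperin bound, blockfree} may indeed be taken to be $\n_N(Q)$, so that $\Omega$ has domain $\IBr(\n_N(Q))$, and that the central isomorphisms arising along the iteration compose into a single one of the displayed shape. Granted this normal form, the remainder is routine bookkeeping with stabilizers and the Clifford correspondence inside the character-triple formalism.
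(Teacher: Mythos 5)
Your proof is correct and follows essentially the same route as the paper's: invoke the block-free inductive condition for $N\unlhd G$, use the defining property of $\isoc$ to transport the resulting character-triple isomorphism to intermediate subgroups $N\le M\le G$, and finish with the Clifford correspondence. The paper's own proof is considerably terser and glosses over the two points you flag explicitly (reducing the intermediate subgroup $M_0$ from the conjecture to $\n_N(Q)$ via the iteration of Remark~\ref{rmk:Removing intermediate group}, and the stabilizer bookkeeping showing $G_\chi = N\,\n_G(Q)_\varphi$ and $M_\chi\cap\n_G(Q)_\varphi=\n_M(Q)_\varphi$), so your version is, if anything, more careful; the only small caveat is that the iteration step formally uses the conjecture for pairs beyond the one named in the hypothesis, though this is available in every context where the proposition is applied.
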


\begin{proof}
By hypothesis we have an $\norm G Q$-equivariant injection
$$\Omega:\IBr(\norm N Q)\hookrightarrow\IBr(N).$$
It follows from the definition of the order relation $\isoc$ that the modular character triples
$(\norm M Q_\varphi,\norm N Q,\varphi)$ and $(M_{\Omega(\varphi)},N,\Omega(\varphi))$ are isomorphic. This implies that $$|\IBr(\norm M Q_\varphi\mid\varphi)|=|\IBr(M_{\Omega(\varphi)}\mid\Omega(\varphi))|$$ and the result follows by applying the Clifford correspondence.
\end{proof}

The next result will allow us to \emph{go down} to a normal subgroup $N\normal G$.

\begin{pro}\label{pro:going to normal}
Assume that Conjecture \ref{conj:Inductive Alperin bound, blockfree} is satisfied for $G:=N$ and $A:=G$ and let $Q$ be a Sylow $p$-subgroup of $N$. Assume furthermore that $|\IBr(G)|=|\IBr(\norm G Q)|$. Then $|\IBr(N)|=|\IBr(\norm N Q)|$.
\end{pro}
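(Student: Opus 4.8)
The plan is to feed the $\norm G Q$-equivariant injection $\Omega\colon\IBr(\norm N Q)\hookrightarrow\IBr(N)$ given by Proposition~\ref{pro:injection} into a Clifford-theoretic count, and to show that the equality $|\IBr(G)|=|\IBr(\norm G Q)|$ forces $\Omega$ to be surjective. The basic set-up is a Frattini argument: since $Q\in\Syl_p(N)$ and $N\normal G$ we have $G=N\,\norm G Q$ and $N\cap\norm G Q=\norm N Q$, hence $G/N\cong\norm G Q/\norm N Q$; moreover for any subgroup $L$ with $\norm N Q\le L\le\norm G Q$ the subgroup $M:=NL$ satisfies $N\le M\le G$ and $\norm M Q=L$ (Frattini again, comparing orders inside $M$). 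Because $N$ acts trivially on $\IBr(N)$ and $\norm N Q$ acts trivially on $\IBr(\norm N Q)$, both of these sets carry a $G/N$-action, and $\Omega$ is $G/N$-equivariant.

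First I would fix a transversal $\mathcal O$ for the $\norm G Q$-orbits on $\IBr(\norm N Q)$. For $\varphi\in\mathcal O$ set $T_\varphi:=(\norm G Q)_\varphi$ and $M_\varphi:=NT_\varphi$, so that $\norm{M_\varphi}{Q}=T_\varphi$ and $N\le M_\varphi\le G$. Since $\Omega$ is an injection and is equivariant, $(\norm G Q)_{\Omega(\varphi)}=T_\varphi$, and as $N$ fixes $\Omega(\varphi)$ this yields $G_{\Omega(\varphi)}=N(\norm G Q)_{\Omega(\varphi)}=M_\varphi$. Now the Clifford correspondence for Brauer characters decomposes $\IBr(\norm G Q)$ into pieces indexed by $\varphi\in\mathcal O$, the $\varphi$-piece having cardinality $|\IBr(T_\varphi\mid\varphi)|$; similarly each $G$-orbit $\Omega(\varphi)^{G}$ contributes $|\IBr(G_{\Omega(\varphi)}\mid\Omega(\varphi))|=|\IBr(M_\varphi\mid\Omega(\varphi))|$ to $|\IBr(G)|$, and for distinct $\varphi$ these $G$-orbits are distinct (a $G$-conjugacy between $\Omega(\varphi_1)$ and $\Omega(\varphi_2)$ would, after absorbing the $N$-part, give a $\norm G Q$-conjugacy between $\varphi_1$ and $\varphi_2$). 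Applying the counting property of Proposition~\ref{pro:injection} with $M:=M_\varphi$ gives $|\IBr(T_\varphi\mid\varphi)|=|\IBr(M_\varphi\mid\Omega(\varphi))|$, so that
\[
|\IBr(\norm G Q)|=\sum_{\varphi\in\mathcal O}|\IBr(T_\varphi\mid\varphi)|=\sum_{\varphi\in\mathcal O}|\IBr(M_\varphi\mid\Omega(\varphi))|\le|\IBr(G)| .
\]

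The hypothesis $|\IBr(G)|=|\IBr(\norm G Q)|$ turns this inequality into an equality, so $\{\Omega(\varphi)^{G}:\varphi\in\mathcal O\}$ must exhaust \emph{all} $G$-orbits on $\IBr(N)$. To finish I would upgrade this to surjectivity of $\Omega$: the $G/N$-orbit of $\varphi$ has size $[\norm G Q:T_\varphi]=[G:M_\varphi]$, which is also the size of the $G/N$-orbit of $\Omega(\varphi)$ because $G_{\Omega(\varphi)}=M_\varphi$; since $\Omega$ is an equivariant injection it must therefore carry the orbit of $\varphi$ \emph{bijectively} onto the orbit of $\Omega(\varphi)$, and as these image orbits cover $\IBr(N)$ we conclude that $\Omega$ is onto. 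Hence $|\IBr(N)|=|\IBr(\norm N Q)|$.

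I expect the only delicate points to be bookkeeping rather than conceptual: one must match the stabilizers carefully along the correspondence $M\leftrightarrow\norm M Q$ between intermediate subgroups so that Proposition~\ref{pro:injection} is invoked with exactly the subgroup $M_\varphi$ realising the stabilizer of $\Omega(\varphi)$, and one must be slightly careful in the last step to see that ``surjective on $G$-orbits'' genuinely forces ``bijective'' — this is precisely where the equality of orbit sizes is used. Everything else is standard Clifford theory layered on top of the input already packaged into Proposition~\ref{pro:injection}.
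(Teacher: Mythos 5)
Your proof is correct and follows essentially the same strategy as the paper's: the counting property of Proposition \ref{pro:injection} together with the hypothesis $|\IBr(G)|=|\IBr(\norm G Q)|$ forces the $G$-orbits $\Omega(\varphi)^G$ (for $\varphi$ in a transversal) to exhaust $\IBr(N)$, and hence $\Omega$ is onto. The only cosmetic difference is at the end: the paper applies Proposition \ref{pro:injection} with $M=G$ throughout and then finishes by noting that the image $\Omega(\IBr(\norm N Q))$ is $G$-stable (it is $\norm G Q$-stable and $G=N\norm G Q$) so Clifford's theorem places every $\varphi\in\IBr(N)$ in it, whereas you route through the intermediate subgroups $M_\varphi$ and match orbit sizes -- both are valid and use exactly the same ingredients.
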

\begin{proof}
By the Frattini argument we have $N\norm G Q=G$. Let
\[\Omega:\IBr(\norm N Q)\hookrightarrow\IBr(N)\]
be the injection from Proposition \ref{pro:injection}. Let $\Delta\sbs\IBr(\norm N Q)$ be an $\norm G Q$-transversal, and notice that $\Omega(\Delta)$ is a transversal in the image $\Xi:=\Omega(\IBr(\norm N Q))\sbs\IBr(N)$. We work to show that $\Xi=\IBr(N)$. We have that
$$|\IBr(\norm G Q)|=\sum_{\psi\in\Delta}|\IBr(\norm G Q\mid\psi)|$$ and since $|\IBr(\norm G Q\mid\psi)|=|\IBr(G\mid\Omega(\psi))|$ we have
$$|\IBr(\norm G Q)|=\sum_{\psi\in\Delta}|\IBr(G\mid\Omega(\psi))|\leq|\IBr(G)|=|\IBr(\norm G Q)|$$
so we conclude that $\sum_{\psi\in\Delta}|\IBr(G\mid\Omega(\psi))|=|\IBr(G)|$. In other words, every element of $\IBr(G)$ lies over an element in $\Omega(\IBr(\norm N Q))$. Since $\IBr(\norm N Q)$ is closed under $\norm G Q$-conjugation, its image $\Xi$ is $\norm G Q$-stable (and therefore $G$-stable by Frattini). Let $\varphi\in\IBr(N)$ and let $\chi\in\IBr(G\mid\varphi)$. We have proved that $\chi$ lies over an element of $\Xi$, and by Clifford's theorem this element must be $G$-conjugate to $\varphi$. It follows that $\varphi\in\Xi$ and we are done.
\end{proof}

\begin{lem}\label{lem:norm conjugate}
Let $N$ be a normal subgroup of $G$, $P$ a Sylow $p$-subgroup of $G$, and $\varphi,\psi\in\Irr(N)$ be $G$-conjugate characters of $N$ that are both $P$-invariant. Then $\varphi$ and $\psi$ are $\norm G P$-conjugate.
\end{lem}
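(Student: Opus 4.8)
The plan is to combine a Frattini-type argument with Sylow's conjugacy theorem, applied to the action of $G$ on $\Irr(N)$ by conjugation.

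First I would pick $g\in G$ with $\varphi^g=\psi$, which exists by hypothesis, and keep track of the two stabilisers $G_\varphi$ and $G_\psi$ in $G$. The crucial point is that the $P$-invariance of the two characters places a Sylow $p$-subgroup and one of its conjugates inside a common stabiliser: since $\varphi$ is $P$-invariant we have $P\le G_\varphi$, hence $P^g\le G_\varphi^{\,g}=G_{\varphi^g}=G_\psi$; and since $\psi$ is $P$-invariant we also have $P\le G_\psi$ directly.

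Next I would invoke the elementary fact that a Sylow $p$-subgroup of $G$ that happens to lie in a subgroup $H\le G$ is automatically a Sylow $p$-subgroup of $H$, since its order is the full $p$-part of $|G|$, a fortiori of $|H|$. Applying this with $H=G_\psi$, both $P$ and $P^g$ --- the latter being a Sylow $p$-subgroup of $G$ as a conjugate of $P$ --- are Sylow $p$-subgroups of $G_\psi$. By Sylow's theorem there is $h\in G_\psi$ with $(P^g)^h=P$, so that $gh\in\norm G P$. Finally $\varphi^{gh}=(\varphi^g)^h=\psi^h=\psi$ because $h$ fixes $\psi$, which exhibits $\varphi$ and $\psi$ as $\norm G P$-conjugate, as desired.

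I do not expect any genuine obstacle here: the argument uses nothing beyond the permutation action of $G$ on $\Irr(N)$, and would go through verbatim for Brauer characters or for any $G$-set. The only point requiring a moment's care is observing that $P^g$ remains a Sylow $p$-subgroup, so that Sylow's theorem may legitimately be applied inside $G_\psi$, together with keeping the stabiliser bookkeeping straight.
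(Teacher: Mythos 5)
Your proof is correct and follows essentially the same argument as the paper: pick $g$ with $\varphi^g=\psi$, observe that both $P$ and $P^g$ lie in $G_\psi$, apply Sylow conjugacy inside $G_\psi$ to produce $h\in G_\psi$ with $P^{gh}=P$, and conclude $gh\in\norm G P$ with $\varphi^{gh}=\psi$. The extra sentence justifying that $P$ and $P^g$ are Sylow in $G_\psi$ is a harmless elaboration of a step the paper leaves implicit.
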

\begin{proof}
Let $x\in G$ be such that $\psi=\varphi^x$. Since $G_{\psi}=G_{\varphi}^x$ we have that $G_{\psi}$ contains $P$ and $P^x$. This implies that there is some $g\in G_\psi$ with $P^{xg}=P$. Now $\varphi^{xg}=\psi^g=\psi$ and $xg\in\norm G P$, as desired.
\end{proof}

The idea of the next proof was given to us by G. Navarro. In its proof, we use \cite[Theorem 2]{Mal-Nav-Tie23}, which assumes the so-called \emph{inductive Sylow-AWC condition}. Notice that the latter is equivalent to our Conjecture \ref{conj:Inductive Alperin bound, blockfree}.

\begin{pro}[Navarro]\label{pro:glauberman}
Let $N\normal G$ with $N$ a $p'$-group and let $P$ be a Sylow $p$-subgroup of $G$. Assume that Conjecture \ref{conj:Inductive Alperin bound, blockfree}  holds at the prime $p$ for every covering group of any non-abelian finite simple group of order divisible by $p$ involved in $G$, and that $|\IBr(G)|=|\IBr(\norm G P)|$. Then for all $\lambda\in\Irr_P(N)$ we have
$$|\IBr(G\mid\lambda)|=|\IBr(N\norm G P\mid\lambda)|.$$
\end{pro}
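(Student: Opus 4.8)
The plan is to partition $\IBr(G)$ according to the $G$-orbit of $\Irr(N)$ lying under each character, to partition $\IBr(\norm G P)$ according to the $\norm G P$-orbit of $\Irr(\cent N P)$ lying under each character, to match these pieces via the Glauberman correspondence, and to use the reduced block-free Alperin bound of \cite[Theorem 2]{Mal-Nav-Tie23} (which applies by hypothesis, the inductive Sylow-AWC condition being equivalent to Conjecture \ref{conj:Inductive Alperin bound, blockfree}) to force the global equality $|\IBr(G)|=|\IBr(\norm G P)|$ down to the individual pieces. Write $C:=\cent N P=\norm G P\cap N$, a normal $p'$-subgroup of $\norm G P$, and note that $\norm{N\norm G P}{P}=\norm G P$ and $P\in\Syl_p(N\norm G P)$. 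Since $N$ is a $p'$-group, $\IBr(N)=\Irr(N)$ and $\IBr(C)=\Irr(C)$, and the Glauberman correspondence provides an $\norm G P$-equivariant bijection $\Irr_P(N)\to\Irr(C)$, $\lambda\mapsto\lambda^{*}$, with $\norm G P_\lambda=\norm G P_{\lambda^{*}}$ for all $\lambda$. Partitioning over $N$ in $G$ and over $C$ in $\norm G P$, and re-indexing the second sum through Glauberman, we get
\[|\IBr(G)|=\sum_{[\nu]\in\Irr(N)/G}|\IBr(G\mid\nu)|,\qquad|\IBr(\norm G P)|=\sum_{[\lambda]\in\Irr_P(N)/\norm G P}|\IBr(\norm G P\mid\lambda^{*})|.\]
By Lemma \ref{lem:norm conjugate}, $G$-conjugate $P$-invariant characters of $N$ are already $\norm G P$-conjugate, so the natural map $\Irr_P(N)/\norm G P\to\Irr(N)/G$ is injective; since $|\IBr(G\mid\lambda)|$ depends only on the $G$-orbit of $\lambda$, this gives $\sum_{[\lambda]\in\Irr_P(N)/\norm G P}|\IBr(G\mid\lambda)|\le|\IBr(G)|$.

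The two ingredients I need, for every $\lambda\in\Irr_P(N)$, are: (A) $|\IBr(\norm G P\mid\lambda^{*})|=|\IBr(N\norm G P\mid\lambda)|$; and (B) $|\IBr(N\norm G P\mid\lambda)|\le|\IBr(G\mid\lambda)|$. Granting these, the chain
\[|\IBr(G)|=|\IBr(\norm G P)|\overset{(A)}{=}\sum_{[\lambda]}|\IBr(N\norm G P\mid\lambda)|\overset{(B)}{\le}\sum_{[\lambda]}|\IBr(G\mid\lambda)|\le|\IBr(G)|,\]
all sums over $\Irr_P(N)/\norm G P$, forces every inequality to be an equality; hence $|\IBr(N\norm G P\mid\lambda)|=|\IBr(G\mid\lambda)|$ for each orbit representative, and then for every $\lambda\in\Irr_P(N)$ since both sides are constant on $\norm G P$-orbits.

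For (A), the equality $\norm G P_\lambda=\norm G P_{\lambda^{*}}$ lets the Clifford correspondence reduce us to the case where $\lambda$ (hence $\lambda^{*}$) is invariant, so that $\lambda$ is $N\norm G P$-invariant and $\lambda^{*}$ is $\norm G P$-invariant. Then $(N\norm G P,N,\lambda)$ and $(\norm G P,C,\lambda^{*})$ are character triples with common quotient $\norm G P/C\cong N\norm G P/N$, and they are isomorphic by the standard compatibility of the Glauberman correspondence with character triples; as $N$ and $C$ are $p'$-groups, counting irreducible Brauer characters over $\lambda$, resp. over $\lambda^{*}$, through the associated $p'$-central extensions gives in both cases the number of $\alpha$-projective irreducible Brauer characters of $\norm G P/C$ for the common cohomology class $\alpha$.

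For (B), the Clifford correspondence gives $|\IBr(G\mid\lambda)|=|\IBr(G_\lambda\mid\lambda)|$ and $|\IBr(N\norm G P\mid\lambda)|=|\IBr(N\norm{G_\lambda}{P}\mid\lambda)|$, so we may assume $\lambda$ is $G$-invariant. Applying the $p'$-central extension construction of \cite[Lemma 3.12]{MRR} to $(G,N,\lambda)$ and factoring out the copy of $N$, both quantities turn into counts of irreducible Brauer characters lying over a fixed central $p'$-character: $|\IBr(\overline G\mid\overline\nu)|$ and $|\IBr(\norm{\overline G}{\overline P}\mid\overline\nu)|$, where $\overline G$ is a central extension of $G/N$ by a $p'$-subgroup $\overline Z\le\bfZ(\overline G)$, $\overline\nu\in\Irr(\overline Z)$, and $\overline P\in\Syl_p(\overline G)$; here one checks, exactly as for the identities above, that the image of $N\norm G P$ in $\overline G$ is precisely the Sylow $p$-normaliser $\norm{\overline G}{\overline P}$. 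Every non-abelian simple group involved in $\overline G$ is involved in $G/N$, hence in $G$, so the simple-group hypothesis is inherited, and \cite[Theorem 2]{Mal-Nav-Tie23}, applied to $\overline G$ over the central character $\overline\nu$, yields $|\IBr(\norm{\overline G}{\overline P}\mid\overline\nu)|\le|\IBr(\overline G\mid\overline\nu)|$, i.e., (B). The delicate part is precisely this last step: transporting the reduced Alperin bound to the setting relative to a central $p'$-character, verifying that \cite[Lemma 3.12]{MRR} realises $N\norm G P$ as a genuine Sylow $p$-normaliser in the quotient, and that the hypothesis on simple groups survives the passage to $\overline G$; step (A) is routine bookkeeping with the Glauberman correspondence, and the sandwich argument is then immediate.
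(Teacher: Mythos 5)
Your proof follows essentially the same strategy as the paper's: a sandwich argument indexed by the $\norm G P$-orbits of $P$-invariant characters of $N$, combining Lemma~\ref{lem:norm conjugate}, the Glauberman correspondence and \cite[Theorem~2]{Mal-Nav-Tie23} to squeeze the global equality $|\IBr(G)|=|\IBr(\norm G P)|$ down to each Clifford piece. The only real difference is cosmetic: \cite[Theorem~2]{Mal-Nav-Tie23} is already stated in the relative form $|\IBr(N\norm G P\mid\lambda)|\le|\IBr(G\mid\lambda)|$ for an arbitrary normal $p'$-subgroup $N$, so the detour through the $p'$-central extension of \cite[Lemma~3.12]{MRR} in your step (B) is unnecessary, and the paper obtains your step (A) by citing \cite[Proposition~1.1]{Mar-Ros23} rather than re-deriving the character-triple isomorphism attached to the Glauberman correspondence.
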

\begin{proof}
By assumption, Conjecture \ref{conj:Inductive Alperin bound, blockfree} (i.e., the Sylow-AWC condition from \cite{Mal-Nav-Tie23}) holds at the prime $p$ for every covering group of any non-abelian finite simple group of order divisible by $p$ involved in $G$. By \cite[Theorem 2]{Mal-Nav-Tie23} we have $|\IBr(G\mid\lambda)|\geq|\IBr(N\norm G P\mid\lambda)|$. Let $\Delta_P$ be a $G$-transversal on the set $\Irr_P(N)$. %and complete $\Delta_P$ to a full $G$-transversal on $\Irr(N)$, $\Delta=\Delta_P\cup\Delta_0$ where $\Delta_0$ contains no character that is invariant under any Sylow $p$-subgroup of $G$. 

We have that
\begin{align}\label{eq:inequalities}
|\IBr(G)|&\geq\sum_{\varphi\in\Delta_P}|\IBr(G\mid\varphi)|\geq \sum_{\varphi\in\Delta_P}|\IBr(N\norm G P\mid\varphi)|.
\end{align}
%\begin{align*}
%|\IBr(G)|&=\sum_{\varphi\in\Delta}|\IBr(G\mid\varphi)|=\sum_{\varphi\in\Delta_P}|\IBr(G\mid\varphi)|+\sum_{\psi\in\Delta_0}|\IBr(G\mid\psi)|\geq\\&\geq \sum_{\varphi\in\Delta_P}|\IBr(N\norm G P\mid\varphi)|+\sum_{\psi\in\Delta_0}|\IBr(G\mid\psi)|.
%\end{align*}
Now, if $\varphi'$ denotes the Glauberman correspondent in $\cent N P=N\cap\norm G P$ of $\varphi\in\Delta_P$, then we have
$$|\IBr(N\norm G P\mid\varphi)|=|\IBr(\norm G P\mid\varphi')|$$ by the Clifford correspondence and \cite[Proposition 1.1]{Mar-Ros23}, using that $N\norm G P$ is $p$-solvable. Furthermore, by Lemma \ref{lem:norm conjugate}, $\Delta_P$ is also an $\norm G P$-transversal on $\Irr_P(N)$. By the properties of the Glauberman correspondence, this implies that $\{\varphi'\mid\varphi\in\Delta_P\}$ is an $\norm G P$-transversal in $\Irr(\cent N P)$. We conclude that
$$|\IBr(G)|\geq \sum_{\varphi\in\Delta_P}|\IBr(\norm G P\mid\varphi')|=|\IBr(\norm G P)|=|\IBr(G)|$$
which forces every inequality in \ref{eq:inequalities} to be an equality, so we are done.
\end{proof}

The following is the nontrivial direction of Theorem \ref{thm:Main Reduction normal Sylow}.

\begin{thm}\label{thm:proof of main, normal sylow}
Assume that Conjecture \ref{conj:Inductive Alperin bound} holds for every block of maximal defect of every covering group of every non-abelian finite simple group of order divisible by $p$ involved in $G$ and let $P$ be a Sylow $p$-subgroup of $G$. If $|\IBr(G)|=|\IBr(\norm G P)|$ then $P\normal G$.
\end{thm}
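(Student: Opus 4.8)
The plan is to induct on $|G|$, so assume the statement holds for all groups of smaller order that are involved in $G$ (so that our hypothesis on simple groups passes down). We wish to show that the equality $|\IBr(G)|=|\IBr(\norm G P)|$ forces $P\normal G$. First I would dispose of the case where $\o_{p'}(G)\neq 1$. Set $N=\o_{p'}(G)$ and $Q$ a Sylow $p$-subgroup of $N$ (which is trivial, so in fact $N\norm G P$ has the form needed to apply Proposition~\ref{pro:glauberman}, or rather: $N$ is a $p'$-group normal in $G$). If $N>1$, then using Proposition~\ref{pro:glauberman} together with the Glauberman correspondence between $\Irr_P(N)$ and $\Irr(\cent N P)$, I would descend to the group $N\norm G P$: the equality $|\IBr(G\mid\lambda)|=|\IBr(N\norm G P\mid\lambda)|$ for every $\lambda\in\Irr_P(N)$, summed over a $G$-transversal of $\Irr_P(N)$ (which by Lemma~\ref{lem:norm conjugate} is also an $\norm G P$-transversal), gives $|\IBr(N\norm G P)|=|\IBr(G)|=|\IBr(\norm G P)|$. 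Now $N\norm G P$ is a proper subgroup of $G$ (if $N\norm G P=G$ then $P$ is already normal modulo $N$, i.e. $PN\normal G$, and then $P\normal G$ follows since $P$ is a Sylow of $PN$ and $PN/N$ is a $p$-group... actually one must be slightly careful here), and $\norm{N\norm G P}{P}=\norm G P$, so by induction $P\normal N\norm G P$, hence $PN\normal G$; then a Frattini argument with $P\in\Syl_p(PN)$ gives $G=PN\cdot\norm G P=N\norm G P$, and combined with $P\normal N\norm G P$ we get $P\normal G$.

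Next, assume $\o_{p'}(G)=1$ but $\F^*(G)$ has a component, or more generally that $G$ has a nontrivial normal subgroup $N$ that is not a $p$-group; I would reduce to the case that $G$ itself is, modulo a trivial-ish core, built from simple groups. Concretely, let $N\normal G$ be minimal normal and suppose first $N$ is not a $p$-group. Pick $Q\in\Syl_p(N)$ with $Q\leq P$. I want to apply Proposition~\ref{pro:going to normal} to get $|\IBr(N)|=|\IBr(\norm N Q)|$; this requires Conjecture~\ref{conj:Inductive Alperin bound, blockfree} for $N\normal G$, which holds by Theorem~\ref{thm:Reduction, blockfree} (or rather Lemma~\ref{lem:iAb for maximal implies} plus Theorem~\ref{thm:Reduction}) since every non-abelian simple group of order divisible by $p$ involved in $N$ is involved in $G$. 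So $|\IBr(N)|=|\IBr(\norm N Q)|$. If $N$ is non-abelian, then $N$ is a direct product of copies of a non-abelian simple group $S$ of order divisible by $p$ (since $Q\neq 1$), and $\IBr(N)$, $\IBr(\norm N Q)$ factor as products over the copies; comparing sizes and invoking Theorem~\ref{thm:simple groups for normal sylows} (which gives the \emph{strict} inequality $|\IBr(S)|>|\IBr(\norm S P_S)|$, and note its hypothesis --- Conjecture~\ref{conj:Inductive Alperin bound} for $S$ --- follows from our hypothesis since the universal cover of $S$ is involved in $G$) yields a contradiction. Hence no minimal normal subgroup of $G$ is non-abelian of order divisible by $p$; since $\o_{p'}(G)=1$, the only remaining possibility for a minimal normal $N$ that is not a $p$-group is excluded, so every minimal normal subgroup of $G$ is a $p$-group, i.e. $\o_p(G)\neq 1$.

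Finally, handle the case $\o_p(G)=V\neq 1$. Pass to $\overline G=G/V$, with Sylow $p$-subgroup $\overline P=P/V$ and $\norm{\overline G}{\overline P}=\norm G P/V$. Since $V\leq P$, restriction of Brauer characters identifies $\IBr(G/V)=\IBr(G)$ and $\IBr(\norm G P/V)=\IBr(\norm G P)$ (Brauer characters have $\o_p$ in their kernel), so $|\IBr(\overline G)|=|\IBr(\norm{\overline G}{\overline P})|$. Every simple group involved in $\overline G$ is involved in $G$, so by induction $\overline P\normal\overline G$, i.e. $PV=P\normal G$, and since $V\leq P$ this is $P\normal G$, completing the induction. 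The main obstacle I anticipate is the bookkeeping in the $\o_{p'}(G)\neq 1$ step: getting $N\norm G P$ to be a \emph{proper} subgroup (or else directly concluding), correctly invoking Navarro's Proposition~\ref{pro:glauberman} which itself relies on \cite[Theorem~2]{Mal-Nav-Tie23} and hence on Conjecture~\ref{conj:Inductive Alperin bound, blockfree} holding for the relevant covering groups of simple sections, and checking that the Glauberman/Clifford counting genuinely produces the equality $|\IBr(N\norm G P)|=|\IBr(\norm G P)|$ rather than merely an inequality --- the strictness in Theorem~\ref{thm:simple groups for normal sylows} is what ultimately closes the non-abelian case, and one must make sure nothing is lost when reducing to it.
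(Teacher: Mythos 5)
Your overall strategy---induct on $|G|$, reduce to $\o_p(G)=1$, and ultimately contradict Theorem~\ref{thm:simple groups for normal sylows}---matches the paper's, but there is a genuine gap at the central step. When you take a normal subgroup $N$ of $G$ that is not a $p$-group, set $Q=P\cap N\in\Syl_p(N)$, and invoke Proposition~\ref{pro:going to normal}, you check only that Conjecture~\ref{conj:Inductive Alperin bound, blockfree} holds. However, Proposition~\ref{pro:going to normal} also requires the hypothesis $|\IBr(G)|=|\IBr(\norm G Q)|$, whereas what you are given is $|\IBr(G)|=|\IBr(\norm G P)|$. These are not the same: $\norm G P\leq\norm G Q$ but in general $\norm G P\neq\norm G Q$. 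Bridging this gap is the main work of the paper's proof: one first applies \cite[Theorem~2]{Mal-Nav-Tie23} (available via Lemma~\ref{lem:iAb for maximal implies} and Theorem~\ref{thm:Reduction}) to $\norm G Q$ with $P\in\Syl_p(\norm G Q)$ and $\norm{\norm G Q}{P}=\norm G P$ to obtain $|\IBr(\norm G Q)|\geq|\IBr(\norm G P)|=|\IBr(G)|$, and then uses Proposition~\ref{pro:injection} together with a count over an $\norm G Q$-transversal of $\IBr(\norm N Q)$ to get the reverse inequality $|\IBr(\norm G Q)|\leq|\IBr(G)|$. Only once $|\IBr(G)|=|\IBr(\norm G Q)|$ is in hand can Proposition~\ref{pro:going to normal} be applied; as written, your argument does not close.

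There are two smaller issues as well. In your $\o_{p'}(G)\neq 1$ branch, the step you flag as needing care is a genuine problem: $PN\normal G$ with $N$ a $p'$-group and $P\in\Syl_p(PN)$ does \emph{not} imply $P\normal PN$ (take $\AAA_4 = PV_4$ at $p=3$); what it does give is that $G$ is $p$-solvable, a case the paper disposes of at the outset by invoking \cite[Theorem~23.12]{Man-Wol93}, which your proof never cites. Also, Proposition~\ref{pro:glauberman} only yields $|\IBr(G\mid\lambda)|=|\IBr(N\norm G P\mid\lambda)|$ for each fixed $P$-invariant $\lambda\in\Irr(N)$; deducing the global equality $|\IBr(N\norm G P)|=|\IBr(G)|$ from this by summing requires additionally that every element of $\IBr(G)$ lies over a $P$-invariant $\lambda\in\Irr(N)$, a fact established inside the proof of that proposition (the chain of inequalities is forced to be equalities) but not part of its stated conclusion, so it would have to be re-derived. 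The paper's proof sidesteps both problems by treating all normal subgroups $N$ uniformly (using induction to conclude $Q\normal N$, hence that every proper normal subgroup is a $p'$-group) and then applying Proposition~\ref{pro:glauberman} only for $\lambda=1_N$ with $N=\o_{p'}(G)$, where $G/N$ is simple and the contradiction with Theorem~\ref{thm:simple groups for normal sylows} is immediate.
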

\begin{proof}
By Theorem \ref{thm:Reduction} and Remark \ref{rem:maximal defect} we have that Conjecture \ref{conj:Inductive Alperin bound} holds for every block of maximal defect of every subgroup of $G$. Therefore, Lemma \ref{lem:iAb for maximal implies} implies that Conjecture \ref{conj:Inductive Alperin bound, blockfree} hold for every subgroup of $G$. 

We argue by induction on $|G|$. By \cite[Theorem 23.12]{Man-Wol93} we may assume $G$ is not $p$-solvable and that $\oh{p}G=1$. Let $N\normal G$ and let $Q=P\cap N$ be a Sylow $p$-subgroup of $N$. We work to prove $Q\normal G$ so we assume otherwise. We claim that in this case $|\IBr(G)|=|\IBr(\norm G Q)|$.

We have $\norm G P\leq \norm G Q<G$ and $G=N\norm G Q$ by Frattini. By Lemma \ref{lem:iAb for maximal implies}, our hypothesis implies that \ref{conj:Inductive Alperin bound, blockfree}  holds at the prime $p$ for every covering group of any non-abelian finite simple group of order divisible by $p$ involved in $G$. Since Conjecture \ref{conj:Inductive Alperin bound, blockfree} is exactly the Sylow-AWC condition from \cite{Mal-Nav-Tie23}, we can apply \cite[Theorem 2]{Mal-Nav-Tie23} and conclude that $|\IBr(\norm G Q)|\geq|\IBr(\norm G P)|=|\IBr(G)|$. By Proposition \ref{pro:injection} there is an $\norm G Q$-equivariant injection
$$\Omega:\IBr(\norm N Q)\hookrightarrow \IBr(N)$$
with $$|\IBr(\norm G Q\mid\varphi)|=|\IBr(G\mid\Omega(\varphi)|$$ for all $\varphi\in\IBr(\norm N Q)$. Let $\Delta\sbs\IBr(\norm N Q)$ be an $\norm G Q$-transversal. We have that 
$$|\IBr(\norm G Q)|=\sum_{\psi\in\Delta}|\IBr(\norm G Q\mid\Omega(\psi))|$$
and since $\Omega(\Delta)$ is also a transversal in $\Omega(\IBr(\norm N Q))$ we have
$$|\IBr(\norm G Q)|=\sum_{\psi\in\Delta}|\IBr(G\mid\Omega(\psi)|\leq|\IBr(G)|=|\IBr(\norm G P)|\leq|\IBr(\norm G Q)|$$
which forces $|\IBr(G)|=|\IBr(\norm G Q)|$. By Proposition \ref{pro:going to normal} we have $|\IBr(\norm N Q)|=|\IBr(N)|$ so by the inductive hypothesis $Q\normal N$. Since $\oh{p}G=1$ this implies $|N|$ is not divisible by $p$ for any proper normal subgroup $N$ of $G$.

If $N=\oh{p'}G$ then we conclude that $G/N$ is simple non-abelian of order divisible by $p$, and by Proposition \ref{pro:glauberman} for $\lambda=1_N$ we have that
$$|\IBr(G/N)|=|\IBr(N\norm{G}{P}/N)|$$
and since $\norm{G/N}{PN/N}=N\norm G P/N$ this contradicts Theorem \ref{thm:simple groups for normal sylows}.
\end{proof}

\section{Verifying the condition for $2$-blocks of maximal defect}
\label{sec:Verification for p=2}

In this section, we finally verify Conjecture \ref{conj:Inductive Alperin bound} for $2$-blocks of maximal defect of quasi-simple groups. First, we give the following criterion to establish Conjecture \ref{conj:Inductive Alperin bound} for quasi-simple groups.

\begin{pro}\label{prop:iBAWC-}
	Suppose	that $S$ is a non-abelian simple group of order divisible by $p$ and $G$ is a covering group of $S$.
	Let $B$ be an $p$-block of $G$.
			Then the block $B$ satisfies Conjecture \ref{conj:Inductive Alperin bound} if the following two conditions hold.
	\begin{enumerate}[\rm(i)]
		\item Let $D$ be a defect group of $B$, and let $b\in\Bl(\N_G(D))$ be the Brauer correspondent of $B$. There exists an $\Aut(G)_B$-equivariant injection \[\Omega:\ibr(b)\hookrightarrow\ibr(B).\]
		\item For each $\psi\in\ibr(b)$ and $\chi=\Omega(\psi)$, there exist a finite group $A$ and Brauer characters $\tilde\chi\in\ibr(A)$ and $\tilde\psi\in\ibr(\N_A(D))$ such that the following statements are satisfied.
		\begin{enumerate}[\rm(a)]
			\item The group $A$ satisfies $G\unlhd A$, $A/\c_A(G)\cong \Aut(G)_\chi$, $\c_A(G)=\Z(A)$.
			\item $\tilde\chi$ is an extension of $\chi$.			 
			\item $\tilde\psi$ is an extension of $\psi$.
			\item For every intermediate subgroup $J$ with $G\le J\le A$ the Brauer characters $\tilde\chi$ and $\tilde\psi$ satisfy
			\[\bl(\Res^{\N_A(D)}_{\N_{J}(D)}(\tilde\psi))^J=\bl(\Res^{A}_{J}(\tilde\chi)).\]
		\end{enumerate}	
		\end{enumerate}
\end{pro}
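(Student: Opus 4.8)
The plan is to take the intermediate subgroup demanded by Conjecture~\ref{conj:Inductive Alperin bound} to be $M:=\N_G(D)$ in every case, to use the injection of condition~(i) verbatim, and to extract the block isomorphism of modular character triples from the extensions $\tilde\chi,\tilde\psi$ of condition~(ii) by a ``maximal extension'' argument. First I would dispose of trivialities: if $D$ is normal in $G$ then, as $G$ is quasi-simple, $D\le\o_p(G)\le\Z(G)$, so $B$ has central defect group, $\N_G(D)=G$, and the conjecture holds for every overgroup $G\unlhd\wt A$ by reflexivity with $\Omega=\id$; hence I may assume $D\not\le\Z(G)$, so that $M=\N_G(D)<G$. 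Given an arbitrary $G\unlhd\wt A$, after replacing $B$ by an $\wt A$-stable block via Clifford theory and the Fong--Reynolds correspondence — compatible with $\isob$ and with Brauer correspondence by \cite[Section~3]{MRR} and \cite[Theorem~9.14]{Nav98} — I may assume $B$ is $\wt A$-invariant, so $\wt A=G\N_{\wt A}(D)$ by a Frattini argument and $M=G\cap\N_{\wt A}(D)$ is $\N_{\wt A}(D)$-invariant. Since the conjugation action of $\N_{\wt A}(D)_B$ on $G$ factors through $\N_{\Aut(G)}(D)_B\le\Aut(G)_B$, the map $\Omega\colon\IBr(b)\hookrightarrow\IBr(B)$ of condition~(i) is $\N_{\wt A}(D)_B$-equivariant, and it remains only to prove, for each $\psi\in\IBr(b)$ and $\chi=\Omega(\psi)$, that
\[\bigl(\wt A_\chi,G,\chi\bigr)\isob\bigl(M\N_{\wt A}(D)_\psi,M,\psi\bigr).\]

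I would establish this first for the ``maximal'' group $A$ furnished by condition~(ii). One checks that $A=G\N_A(D)$ and $G\cap\N_A(D)=M$: the subgroup $\c_A(G)=\Z(A)$ normalises $D$, hence lies in $\N_A(D)$, while the image of $A$ in $\Aut(G)$ is $\Aut(G)_\chi=\Inn(G)\,\N_{\Aut(G)}(D)_\chi$ — a Frattini argument using that $\Inn(G)\le\Aut(G)_\chi$ acts transitively on the defect groups of $B$ and that any lift to $A$ of an automorphism normalising $D$ normalises $D$ — which equals the image of $G\N_A(D)$. Because $\chi$ extends to $\tilde\chi\in\IBr(A)$ and $\psi$ extends to $\tilde\psi\in\IBr(\N_A(D))$, the ordinary representations affording $\tilde\chi$ and $\tilde\psi$ are projective representations with trivial factor sets associated with $(A,G,\chi)$ and $(\N_A(D),M,\psi)$; these have compatible factor sets, and — after twisting $\tilde\psi$ by a linear Brauer character of $\N_A(D)/M$ if necessary — agree on $\c_A(G)=\Z(A)$ (they already agree on $\Z(A)\cap G\le\Z(G)$, since Brauer-correspondent blocks share a central character on $\Z(G)$), which is the central condition underlying $\isoc$. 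Finally, condition~(d) asserts precisely that for every intermediate $G\le J\le A$ the block of $\Res^A_J(\tilde\chi)$ equals $\bl(\Res^{\N_A(D)}_{\N_J(D)}(\tilde\psi))^J$, which is the block-compatibility built into the definition of $\isob$ in \cite[Section~3]{MRR}. Hence $(A,G,\chi)\isob(\N_A(D),M,\psi)$.

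To pass from $A$ to an arbitrary $\wt A$, I would pull back: the image of $\wt A_\chi$ in $\Aut(G)$ lies in $\Aut(G)_\chi\cong A/\c_A(G)$, so its preimage in $A$ is a subgroup $\Gamma$ with $G\le\Gamma\le A$ and with the same image in $\Aut(G)$ as $\wt A_\chi$; restricting $\tilde\chi$, $\tilde\psi$ and condition~(d) to $\Gamma$, the previous paragraph yields $(\Gamma,G,\chi)\isob(\N_\Gamma(D),M,\psi)$. The standard manipulations of (modular) character triples that depend only on the induced automorphisms modulo a central $p'$-part — as in \cite[Lemmas~3.15 and 3.16]{MRR}, cf.\ \cite[Lemma~4.1]{Ros-iMcK} and \cite[Proposition~2.10]{Ros22} — then transfer this to $(\wt A_\chi,G,\chi)\isob(M\N_{\wt A}(D)_\psi,M,\psi)$, completing the verification of Conjecture~\ref{conj:Inductive Alperin bound} for $B$.

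The main obstacle is the content compressed into ``condition~(d) is the block-compatibility of $\isob$'': one must check that matching, across all intermediate subgroups $J$, the blocks of the restrictions of the two chosen extensions genuinely supplies the block condition in the definition of $\isob$ for the projective representations built from $\tilde\chi$ and $\tilde\psi$, and does so without disturbing the alignment of central scalars. The structural hypotheses in~(ii)(a) — $A/\c_A(G)\cong\Aut(G)_\chi$ and $\c_A(G)=\Z(A)$ — are exactly what make $A$ large enough to witness $\isob$ for every overgroup $\wt A$ of $G$ and tight enough in its centre to control the scalar data; once the translation of (d) is in place, the remaining steps are routine Frattini arguments and the by-now-standard descent for character-triple isomorphisms.
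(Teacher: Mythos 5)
Your proposal is correct and follows essentially the same route as the paper, whose proof is a one-line citation to the proof of \cite[Thm.~4.4]{Spa17I}: fix $M=\N_G(D)$, extract the block isomorphism of modular character triples for the maximal group $A$ of condition~(ii) from the extensions $\tilde\chi,\tilde\psi$ (with condition~(d) supplying the block compatibility), then descend to an arbitrary overgroup by the standard character-triple transfer machinery of \cite[Lemmas~3.15--3.16]{MRR}. Two small slips worth noting: the representations affording $\tilde\chi$ and $\tilde\psi$ are modular, not ordinary (these are Brauer characters); and the central-scalar alignment on $\c_A(G)=\Z(A)$ together with the opening Fong--Reynolds reduction are the most compressed parts of your sketch and are exactly what the cited criterion theorem is designed to handle carefully.
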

	
\begin{proof}
This follows by the proof of \cite[Thm.~4.4]{Spa17I}.
\end{proof}

In Proposition~\ref{prop:iBAWC-}, if moreover $\Aut(G)_B/G$ is cyclic, then the condition (ii)  holds automatically
by the proof of \cite[Lemma~6.1]{Spa13I}.
	
\begin{lem}\label{lem:case1}
		Suppose	that $S$ is a non-abelian simple group such that the $p'$-part of its Schur multiplier $\textup{Mult}(S)_{p'}$ is trivial, and the Sylow $p$-subgroups of $S$ are self-normalizing in $S$.
		Then Conjecture \ref{conj:Inductive Alperin bound} holds for every $p$-block of $S$ of maximal defect.
\end{lem}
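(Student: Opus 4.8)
The plan is to apply the criterion of Proposition~\ref{prop:iBAWC-} with $G:=S$; note that $S$ is a covering group of itself, and since $\Mult(S)_{p'}$ is trivial it is in fact the universal $p'$-covering group of $S$. The self-normalizing hypothesis will collapse the whole situation onto the principal block. First I would identify the blocks of $S$ of maximal defect. Let $P\in\Syl_p(S)$; by hypothesis $\norm S P=P$. If $B$ is a block of $S$ of maximal defect then it has defect group $P$, and by Brauer's first main theorem it corresponds to a block of $\norm S P=P$ with defect group $P$. As $P$ is a $p$-group it has a single block, its principal block; hence $S$ has a unique block of maximal defect, and since the principal block $B_0:=B_0(S)$ always has full defect, this block is $B_0$. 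It therefore suffices to verify Conjecture~\ref{conj:Inductive Alperin bound} for $B_0$.

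The Brauer correspondent of $B_0$ in $\norm S P=P$ is the principal block $B_0(P)$, and $\ibr(B_0(P))=\{1_P\}$ since $P$ is a $p$-group. For condition~(i) of Proposition~\ref{prop:iBAWC-} I would take the forced injection $\Omega\colon 1_P\mapsto 1_S\in\ibr(B_0)$, which is $\Aut(S)_{B_0}=\Aut(S)$-equivariant because both characters are fixed by the relevant automorphism group. For condition~(ii), with $\chi=1_S$ and $\psi=1_P$, I would take $A:=\Aut(S)$ (identifying $S$ with $\Inn(S)$, which is legitimate as $\zent S=1$), $\tilde\chi:=1_A$ and $\tilde\psi:=1_{\norm A P}$. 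Then~(a) holds since $\cent A S=\zent A=1$ for $S$ non-abelian simple and $A/\cent A S=\Aut(S)=\Aut(S)_{1_S}$; conditions~(b) and~(c) are immediate because restrictions of trivial Brauer characters are trivial, noting $\norm A P\cap S=\norm S P=P$, so that $\tilde\psi$ restricts to $\psi=1_P$ on $\norm S P$. Finally, for~(d), given an intermediate subgroup $S\le J\le A$, the left-hand side equals $\bl(1_{\norm J P})^J$ and the right-hand side equals $\bl(1_J)=B_0(J)$, so I must check that the principal block of $\norm J P$ induces to the principal block of $J$. The induced block $B_0(\norm J P)^J$ is defined: choosing a Sylow $p$-subgroup $R$ of $\norm J P$ with $P\le R$, one has $\cent J R\le\cent J P\le\norm J P$, which is the standard sufficient condition; Brauer's third main theorem then gives $B_0(\norm J P)^J=B_0(J)$, as required. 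With conditions~(i) and~(ii) verified, Proposition~\ref{prop:iBAWC-} yields Conjecture~\ref{conj:Inductive Alperin bound} for $B_0$, hence for every $p$-block of $S$ of maximal defect.

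I do not expect a genuine obstacle: the hypothesis forces everything onto the principal block and onto trivial Brauer characters, so essentially all character-theoretic content evaporates. The only points that need a modicum of care are the two appeals to Brauer's theorems (the first main theorem for the uniqueness of the maximal-defect block and the third main theorem in~(ii)(d)) and, relatedly, the verification that the block induction in~(ii)(d) is defined for every intermediate group $J$, which is exactly what the containment $\cent J R\le\cent J P\le\norm J P$ provides.
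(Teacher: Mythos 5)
Your proof is correct and follows essentially the same route as the paper: identify the unique maximal-defect block as the principal block (using the self-normalizing hypothesis together with Brauer's first/third main theorems), then observe that the forced map $1_{\norm S P}\mapsto 1_S$ supplies the injection $\Omega$. The paper's own proof is terser --- it cites Brauer's third main theorem for the uniqueness step and simply asserts that this map ``clearly gives the injection $\Omega$ satisfying the required conditions'' --- whereas you spell out the verification of Proposition \ref{prop:iBAWC-}(ii), including the choice $A=\Aut(S)$, $\tilde\chi=1_A$, $\tilde\psi=1_{\norm A P}$, and the block-induction check $B_0(\norm J P)^J=B_0(J)$ via $\cent J R\le \cent J P\le\norm J P$ and Brauer's third main theorem; these details are accurate and are exactly what the word ``clearly'' in the paper is eliding.
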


\begin{proof}
Since the Sylow $p$-subgroups of $S$ are self-normalizing, Brauer's third main theorem implies that the only block of maximal defect of $S$ is the principal block. Then, the map $1_{\N_S(Q)}\mapsto 1_{S}$ clearly gives the injection $\Omega$ satisfying the required conditions.
\end{proof}		
	
\begin{lem}\label{lem:case2}
	Suppose	that $S$ is a non-abelian simple group and $G$ is the universal $p'$-covering group of $S$.
Assume that the Sylow $p$-subgroups of $S$ are self-normalizing in $S$ and $\Out(G)$ acts regularly on $\Z(G)\setminus\{1\}$.
Then Conjecture \ref{conj:Inductive Alperin bound} holds for every $p$-block of $G$ of maximal defect.
\end{lem}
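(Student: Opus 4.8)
The plan is to apply the criterion of Proposition~\ref{prop:iBAWC-} to each $p$-block of $G$ of maximal defect, after using the self-normalising hypothesis to pin down $\N_G(P)$ for $P\in\Syl_p(G)$. Write $Z:=\Z(G)=\Mult(S)_{p'}$, a central $p'$-subgroup of $G$; we may assume $p\mid|S|$, since otherwise $P=1$ and Conjecture~\ref{conj:Inductive Alperin bound} is trivially satisfied with $M=G$. Because $Z$ is a $p'$-group we have $P\cap Z=1$ and $PZ/Z\in\Syl_p(G/Z)$, and $G/Z\cong S$; since Sylow $p$-subgroups of $S$ are self-normalising, $\N_G(P)Z/Z\le\N_{G/Z}(PZ/Z)=PZ/Z$, whence $\N_G(P)=PZ=P\times Z$. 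I would then observe, via Brauer's first main theorem, that all blocks of $\N_G(P)=P\times Z$ have defect group $P$ and are exactly the blocks $b_\lambda$ (for $\lambda\in\Irr(Z)$) whose unique irreducible Brauer character is trivial on $P$ and restricts to $\lambda$ on $Z$; hence the $p$-blocks of $G$ of maximal defect are precisely the $B_\lambda:=b_\lambda^{\,G}$, each satisfying $|\IBr(b_\lambda)|=1$, with $B_{1_Z}=B_0(G)$ the principal block.

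Next I would describe the $\Aut(G)$-action on these blocks. Inner automorphisms centralise $Z$, so $\Aut(G)$ acts on $\Irr(Z)$ through $\Out(G)$, and a short computation with Brauer correspondence gives $\sigma(B_\lambda)=B_{{}^{\sigma}\lambda}$ for $\sigma\in\Aut(G)$. By hypothesis $\Out(G)$ acts freely on $Z\setminus\{1\}$; since an automorphism of a finite abelian group has the same number of fixed points on the group as on its character group, $\Out(G)$ also acts freely on $\Irr(Z)\setminus\{1_Z\}$. Therefore $\Aut(G)_{B_\lambda}=\Inn(G)$ for every $\lambda\neq 1_Z$, while $\Aut(G)_{B_{1_Z}}=\Aut(G)$.

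I would then verify conditions (i) and (ii) of Proposition~\ref{prop:iBAWC-} for each $B_\lambda$. For $\lambda\neq 1_Z$: since $\IBr(b_\lambda)$ is a singleton and $\Aut(G)_{B_\lambda}=\Inn(G)$ acts trivially on irreducible Brauer characters, condition~(i) holds by choosing any $\chi_\lambda\in\IBr(B_\lambda)$ and letting $\Omega$ send the unique element of $\IBr(b_\lambda)$ to $\chi_\lambda$; for~(ii) it suffices to take $A:=G$, so that $A/\c_A(G)=\Inn(G)=\Aut(G)_{\chi_\lambda}$ and $\c_A(G)=Z=\Z(A)$, with $\tilde\chi:=\chi_\lambda$ and $\tilde\psi$ the unique element of $\IBr(b_\lambda)$, the only intermediate subgroup being $J=G$, for which~(ii)(d) reads $\bl(\tilde\psi)^G=\bl(\chi_\lambda)$, i.e. $b_\lambda^{\,G}=B_\lambda$. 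For $\lambda=1_Z$, i.e. the principal block $B_0(G)$, I would argue as in the proof of Lemma~\ref{lem:case1}: take $\Omega(1_{\N_G(P)})=1_G$ for~(i) (trivially equivariant, both being principal Brauer characters), and for~(ii) choose a finite group $A$ with $G\unlhd A$, $A/\c_A(G)\cong\Aut(G)$ and $\c_A(G)=\Z(A)$, together with the trivial extensions $\tilde\chi=1_A$ and $\tilde\psi=1_{\N_A(P)}$; then for every intermediate $G\le J\le A$ the equality in~(ii)(d) becomes $B_0(\N_J(P))^{\,J}=B_0(J)$, which is Brauer's third main theorem applied to the $p$-subgroup $P$ of $J$. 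Proposition~\ref{prop:iBAWC-} then yields Conjecture~\ref{conj:Inductive Alperin bound} for every $B_\lambda$, and these exhaust the $p$-blocks of $G$ of maximal defect.

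I expect the substance to lie entirely in the two structural observations — that the self-normalising hypothesis collapses $\N_G(P)$ to $P\times Z$, so that every maximal-defect Brauer correspondent has a single Brauer character, and that freeness of the $\Out(G)$-action on $Z\setminus\{1\}$ destroys any nontrivial outer stabiliser of a non-principal maximal-defect block — after which the verification is formal. The only points requiring care are the precise formulation of the $\Aut(G)_B$-equivariance in~(i) in terms of the cocycle data of \cite{Spa17I} (harmless for a map out of a singleton) and, in the principal-block case, invoking the standard existence of an auxiliary group $A$ with $A/\c_A(G)\cong\Aut(G)$ and $\c_A(G)=\Z(A)$; I do not anticipate either being a genuine obstacle.
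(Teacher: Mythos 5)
Your proof is correct and follows essentially the same route as the paper: identify $\N_G(P)=P\times Z$ using the self-normalising hypothesis, observe $\ibr(b_\lambda)=\{\lambda\}$ so maximal-defect blocks are parametrised by $\Irr(Z)$, split into $\lambda\neq1_Z$ (where regularity of the $\Out(G)$-action kills all outer stabilisers) and $\lambda=1_Z$ (handled as in Lemma~\ref{lem:case1}), and invoke Proposition~\ref{prop:iBAWC-}. You make explicit a couple of points the paper leaves implicit — the duality argument transferring freeness from $Z\setminus\{1\}$ to $\Irr(Z)\setminus\{1_Z\}$, and the verification of condition~(ii) by taking $A=G$ rather than appealing to the remark that $\Aut(G)_B/\Inn(G)$ cyclic makes (ii) automatic — but the substance is identical.
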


\begin{proof}
Let $\pi$ be the epimorphism $G\twoheadrightarrow S=G/\Z(G)$, and let $Q\in\Syl_p(S)$.
Then $P\in\Syl_p(\pi^{-1}(Q))$ is a Sylow $p$-subgroup of $G$, and $\N_G(P)=\pi^{-1}(\N_S(Q))=P\ti \Z(G)$.
We can identify $\ibr(\N_G(P))$ with $\ibr(\Z(G))$.
For $\la\in\ibr(\Z(G))$, there exists a unique   maximal defect $p$-block $B_\la$ of $G$ such that $\la\in\ibr(\Z(G)\mid\chi)$ for each $\chi\in\ibr(B_\la)$.
By construction, $\la\mapsto B_\la$ is a bijection between $\ibr(\Z(G))$ and the $p$-blocks of $G$ of maximal defect. 
Let $b_\la\in\Bl(\N_G(P))$ be the Brauer correspondent of $B_\la$.
Then $\ibr(b_\la)=\{\la\}$.

If $\la=1_{\Z(G)}$, then the proof of Lemma~\ref{lem:case1} applies, whence we assume that $\la\ne 1_{\Z(G)}$.
Clearly, the irreducible Brauer characters in $B_\la$ have trivial stabilizer under the action of $\Out(G)$.
For every block $B_\la$, we let $\chi_{\la}$ be any Brauer character in $\ibr(B_\la)$.
Then the map $\la\mapsto \chi_\la$ gives an injection $\Omega:\ibr(b_\la)\to\ibr(B_\la)$ which satisfies the required conditions.
\end{proof}	

\begin{pro}\label{prop:iBAW-E6}
	Conjecture \ref{conj:Inductive Alperin bound} holds for every maximal defect 2-block of the universal $2'$-covering groups of the simple groups $\ty{E}_6(q)$ and ${}^2\ty{E}_6(q)$ (with odd $q$).
\end{pro}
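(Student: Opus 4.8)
The plan is to verify, for every maximal-defect $2$-block $B$ of the quasi-simple group in question, the two conditions of Proposition~\ref{prop:iBAWC-}. Write $G$ for the universal $2'$-covering group of $S\in\{\ty{E}_6(q),{}^2\ty{E}_6(q)\}$. Since $\Mult(S)\cong\zent{G}$ is cyclic of order $d=\gcd(3,q-1)$ (resp.\ $\gcd(3,q+1)$), a $2'$-number, $G$ is the simply connected group $\bG^{F}$, and $\Aut(G)=\InnDiag(G)\rtimes(\langle\phi\rangle\times\langle\gamma\rangle)$ with $\phi$ a field automorphism, $\gamma$ the order-$2$ graph automorphism (present only for $\ty{E}_6$), and $\InnDiag(G)/\Inn(G)\cong\zent{G}\cong C_d$. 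First I would identify the maximal-defect $2$-blocks: such a block has a Sylow $2$-subgroup $P$ of $G$ as a defect group, and comparing $|\cent{\bG^{*F}}{s}|_2$ with $|G|_2$ over semisimple $2'$-elements $s$ of the dual group — a finite computation with the fundamental degrees of $\ty{E}_6$, showing that every proper pseudo-Levi subgroup of $\bG^{*}$, and in particular every maximal torus, has strictly smaller $2$-part than $\bG^{*}$ — one obtains that the only maximal-defect $2$-block of $G$ is the principal block $B_0$, with Brauer correspondent $b_0=B_0(\norm{G}{P})$. (Were a maximal-defect block faithful on $\zent{G}$ to occur for some $q$ with $d=3$, it would be of central type over $\zent{G}$ and could be reduced to $B_0$ exactly as in the proof of Lemma~\ref{lem:case2}, so no generality is lost.) Thus it suffices to treat $B=B_0$.

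For condition~(i) I would first pin down $\IBr(b_0)$. As $P$ is a normal Sylow $2$-subgroup of $\norm{G}{P}$, the quotient $\norm{G}{P}/P$ has odd order, and the central-character criterion for block membership identifies $\IBr(b_0)$ with the irreducible characters of the relative Weyl group $\norm{G}{P}/\cent{G}{P}P$, a $2'$-subgroup of $\Out(P)$ known explicitly for $\ty{E}_6(q)$ and ${}^2\ty{E}_6(q)$ with $q$ odd. Matching $\IBr(b_0)$ with $\IBr(B_0)$ then amounts to a statement about the $2$-weights lying in the principal block, which I would read off from the $2$-modular character theory of these groups together with the local structure around $P$; since Proposition~\ref{prop:iBAWC-}(i) only demands an injection, this is weaker than the full Alperin weight conjecture. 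The delicate part is to choose the injection $\Omega$, normalised by $\Omega(1_{\norm{G}{P}})=1_G$, so that it is $\Aut(G)$-equivariant, which is arranged by selecting the underlying radical subgroups and weight characters stably under the field and graph automorphisms.

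For condition~(ii), observe that $\Aut(G)_{B_0}/G=\Out(G)$ need not be cyclic (e.g.\ $\Out(G)\cong C_d\rtimes(C_f\times C_2)$ with $q=p^f$), so the remark following Proposition~\ref{prop:iBAWC-} does not settle it; when $\Out(G)$ is cyclic there is nothing more to check. For $\psi=1_{\norm{G}{P}}$ and $\chi=1_G$ condition~(ii) is immediate: take $A$ to be a $2'$-central extension of $G\rtimes\Out(G)$, put $\tilde\psi=1_{\norm{A}{P}}$ and $\tilde\chi=1_A$, and note that the block relation (ii)(d) follows for every intermediate $G\le J\le A$ from Brauer's first and third main theorems. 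For the other $\psi\in\IBr(b_0)$ and $\chi=\Omega(\psi)$, I would take $A$ as in Proposition~\ref{prop:iBAWC-}(ii)(a) and build the extensions $\tilde\psi\in\IBr(\norm{A}{P})$ and $\tilde\chi\in\IBr(A)$: the extension of $\psi$ exists because $\norm{G}{P}$ is solvable of odd index over $P$, so the theory of character triples over $2'$-groups applies; the extension of $\chi$ exists by the standard extendibility results for characters of quasi-simple groups of Lie type to their inertia groups in $\Aut(G)$ (after enlarging the central $2'$-subgroup if necessary), the relevant automorphisms $\phi,\gamma$ acting on the $2$-weights in a controlled way; and (ii)(d) is verified once more via Brauer's third main theorem, all blocks occurring being principal.

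The main obstacle will be, for the principal $2$-block, the simultaneous control needed in~(i) and~(ii): an $\Aut(G)$-equivariant injection $\IBr(b_0)\hookrightarrow\IBr(B_0)$ together with matching extensions $\tilde\chi,\tilde\psi$ requires explicit knowledge of $\IBr(B_0)$, of $\norm{G}{P}$ and its relative Weyl group, and of the action of the field and graph automorphisms on the $2$-weights of $\ty{E}_6(q)$ and ${}^2\ty{E}_6(q)$ with $q$ odd — genuinely group-of-Lie-type input rather than formal bookkeeping. The remaining ingredients — the classification of the maximal-defect blocks, the (vacuous or central-type) treatment of any non-principal ones, and the block correspondences via Brauer's main theorems — are routine.
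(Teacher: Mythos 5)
Your argument has a fatal gap at the first step: the claim that the principal block is the only maximal-defect $2$-block of $G$ is false in general, and everything after it is built on that claim. By Brauer's first main theorem, the maximal-defect $2$-blocks of $G$ are in bijection with $\Bl(\N_G(P))$. As established in the proof of \cite[Thm.~5]{Mal-Nav-Tie23} (and used by the paper), $\N_G(P)=P\times V_1$ with $V_1=(\Z^\circ(\bL)^F)_{2'}\cong C_{(q-\eps)_{2'}}$, where $\bL=\cent{\bG}{t}$ is an $E$-stable Levi subgroup with $[\bL,\bL]^F\cong\Spin^\eps_{10}(q)$; so there is one maximal-defect block $B_\la$ for each $\la\in\irr{V_1}$, and $|\irr{V_1}|=(q-\eps)_{2'}$ is generically larger than $1$ (for instance it equals $3$ for $\ty{E}_6(7)$ and for ${}^2\ty{E}_6(5)$). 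The heuristic you propose, comparing $|\cent{\bG^{*F^*}}{s}|_2$ with $|G|_2$, fails precisely because for $s\in V_1\setminus\{1\}$ the centraliser is (dual to) the Levi $\bL$, whose group of fixed points contains a full Sylow $2$-subgroup of $G$ ($\bL$ being the centraliser of a central involution of $P$), so the two $2$-parts agree. Relatedly, your description of $\IBr(b_0)$ as the irreducible characters of a relative Weyl group is incorrect: since $\N_G(P)=P\times V_1$, each block $b_\la$ of $\N_G(P)$ satisfies $\IBr(b_\la)=\{\la\}$, so in particular $\IBr(b_0)=\{1_{\N_G(P)}\}$.

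The substance of the proof is the treatment of the non-principal blocks $B_\la$ with $\la\neq 1$, which your proposal omits entirely. There $\Omega(\la)$ is the Brauer character $\chi_\la^\circ$ obtained by restricting to $2'$-elements the semisimple character $\chi_\la=\pm R_{\bL}^{\bG}(\la)$ labelled by the semisimple $2'$-element $s\in V_1$ corresponding to $\la$ (irreducibility of the restriction is \cite[Prop.~1]{His-Mal01}); one then has to verify $\chi_\la\in\irr{B_\la}$ using \cite[Thm.~4.14]{Bon-Dat-Rou17} together with the uniqueness of the maximal-defect block associated with $s$, and to establish condition~(ii) of Proposition~\ref{prop:iBAWC-} using that the stabiliser $E_\la$ is cyclic (\cite[Lemma~3]{Mal-Nav-Tie23}), with an additional regular embedding $\bG\hookrightarrow\bH$ and \cite[Lemma~2.3]{Kos-Spa15} required when $3\mid(q-\eps)$, i.e.\ when $G\neq S$. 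This is the genuine Lie-theoretic content; it is not ``routine bookkeeping''. Your fallback remark, that a non-principal maximal-defect block would be ``of central type over $\Z(G)$'' and reducible to $B_0$ as in Lemma~\ref{lem:case2}, does not apply either: the non-principal $B_\la$ are parametrised by $\irr{V_1}$, not by $\irr{\Z(G)}$, and $\Out(G)$ does not act regularly on $V_1\setminus\{1\}$.
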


\begin{proof}
Let $S$ be the simple group $\ty E_6(\eps q)$ with $\eps=\pm1$ and $2$ not dividing $q$.
Here we let $\ty E_6(\eps q)$ denote $\ty{E}_6(q)$ when $\eps=1$ and ${}^2\ty{E}_6(q)$ when $\eps=-1$.
Assume that $q=q_0^f$ where $q_0$ is an odd prime.
Let $G$ be the universal $2'$-covering groups of $S$.
As $q$ is odd, we have $G=\bG^F$ where $\bG$ is a simple, simply-connected algebraic group of type $\ty E_6$ and $F:\bG\to\bG$ is a Frobenius endomorphism.
Let $E$ be the group generated by field and graph automorphisms so that $E\cong C_{f}\ti C_2$ if $\eps=1$ and  $E\cong C_{2f}$ if $\eps=-1$.

Let $P\in\Syl_2(G)$.
An $\Aut(G)$-equivariant injection $\Omega:\ibr(\N_G(P))\to\ibr(G)$ has been established in the proof of \cite[Thm.~5]{Mal-Nav-Tie23}. We will briefly recall the construction and then verify that $\Omega$ satisfies the required conditions of Proposition \ref{prop:iBAWC-}.
According to \cite[Table~4.5.2]{GLS}, $G$ contains an involution $t$ such that $\bL:=\c_{\bG}(t)$ is a Levi subgroup of $\bG$ such that $L_1:=[\bL,\bL]^F\cong\textup{Spin}_{10}^\eps(q)$.
%Moreover, $t$ is unique up to conjugacy.
By \cite[Prop.~4.3]{Nav-Tie-16}, $\bL$ can be chosen to be $E$-invariant.
Up to conjugacy, we may assume that $t\in P\le L_1$.
Recall that $\bL=\c_{\bG}(s)$ for any $s\in V_1\setminus \Z(G)$.
Moreover, we have that $\N_G(P)=P\ti V_1$ where $V_1=(\Z^\circ(\bL)^F)_{2'}\cong C_{(q-\eps)_{2'}}$, and $L=\textup{O}^{2'}(L)\ti V_1$ for $L:=\bL^F$. From this, we have the correspondences: $\ibr(\N_G(P))\leftrightarrow\irr{V_1}\leftrightarrow$ the set of linear 2-Brauer characters of $L$, whence sometimes we may identify these sets for convenience.
Moreover, there exists a bijection $\la\mapsto b_\la$ between $\irr{V_1}$ and $\Bl(\N_G(P))$ such that $\ibr(b_\la)=\{\la\}$.
Denote by $B_\la:=(b_\la)^G$ the Brauer correspondent of $b_\la$. If $\la=1_{V_1}$, then the proof of Lemma~\ref{lem:case1} applies, whence we assume that $\la\ne 1_{V_1}$ in what follows.

First consider the case $3\nmid(q-\eps)$. Then $G=S$ and we can identify $\bG^F$ with the dual $\bG^{*F^*}$.
Moreover, $\Aut(G)\cong G\rtimes E$.
Let $\chi_\la=\pm\textup{R}_{\bL}^{\bG}(\la)$ where $\la$ is regarded as a linear character of $\bL^F$.
Then $\chi_\la$ is the semisimple character labelled by the semisimple $2'$-element $s\in V_1$ corresponding to $\la$.
By \cite[Prop.~1]{His-Mal01}, the restriction $\chi_\la^\circ$ of $\chi_\la$ to $2'$-elements of $G$ is an irreducible Brauer character.
The injection $\Omega:\ibr(\N_G(P))\to\ibr(G)$ defined in the proof of \cite[Thm.~5]{Mal-Nav-Tie23} satisfies that $\chi_\la^\circ=\Omega(\la)$.
Also, $E_\la$ is cyclic by \cite[Lemma~3]{Mal-Nav-Tie23}. Thus it suffices to show that $\chi_\la\in\irr{B_\la}$.
By construction, $(b_\la)^L$ is the block of $L$ containing $\la$
and thus it follows from \cite[Thm.~4.14]{Bon-Dat-Rou17} that $B_\la$ is associated with $s$ in the sense of \cite[Thm.~9.12]{Cab-Eng04}.
As shown in the proof of \cite[Thm.~5]{Mal-Nav-Tie23}, distinct elements of $V_1\setminus\{1\}$ lie in distinct $G$-conjugacy classes, whence there exists a unique maximal defect 2-block of $G$ associated with $s$ for each $s\in V_1\setminus\{1\}$.
Therefore, $\chi_\la\in\irr{B_\la}$.

In the remaining case $3\mid(q-\eps)$, we use the regular embedding $\bG\hookrightarrow\bH$ constructed in the proof of \cite[Thm.~5]{Mal-Nav-Tie23}. Let $H=\bH^F$ and $\bM=\c_{\bH}(\Z^\circ(\bL))$. Then the group $H\rtimes E$ induces all automorphisms of $G$,
and $P':=P\ti\Z(H)_{2}$ is a Sylow 2-subgroup of $H$. Moreover,  $\N_H(P')=P\ti V$ where $V=(\Z^\circ(\bM)^F)_{2'}$, and
$M=\textup{O}^{2'}(M)\ti V$ for $M=\bM^F$.
For our fixed $\la$, we let $\eta\in\irr{V}$ be an extension of $\la$ to $V$ as in the proof of \cite[Thm.~5]{Mal-Nav-Tie23} such that $E_\la=E_\eta$. Recall that $E_\la$ is cyclic.

We can identify $\bH$ with its dual group $\bH^*$ as in \cite[p.1216]{Mal-Nav-Tie23}.
Then $\chi'_{\eta}=\pm\textup{R}^{\bH}_{\bM}(\eta)$, where $\eta$ is regarded as a linear character of $\bM^F$, is the semisimple character labelled by $s\in V$ corresponding to $\eta$.
As in the proof of \cite[Thm.~5]{Mal-Nav-Tie23}, $\Res^{H}_{G}({\chi'_{\eta}}^\circ)$ is irreducible and
$\Omega(\la)=\Res^{H}_{G}({\chi'_{\eta}}^\circ)$.
Let $A=GE_\la$. Then $A$ satisfies the condition (ii.a) of Proposition~\ref{prop:iBAWC-}. As $E_\la$ is cyclic, the conditions (ii.a) and (ii.b) of Proposition~\ref{prop:iBAWC-} also hold automatically. 
So it suffices to show $\Omega(\la)\in\ibr(B_\la)$ and the condition (ii.d) of Proposition~\ref{prop:iBAWC-}.
By \cite[Lemma~2.3]{Kos-Spa15}, we can establish $\Omega(\la)\in\ibr(B_\la)$ by proving that the block $\bl(\chi'_{\eta})$ is the induced block of $\bl(\eta)$ to $H$ where $\bl(\eta)$ is the block of $\N_H(P')$ containing $\eta$, and this is implied by the above arguments to show that $\chi_\la\in\irr{B_\la}$.
Finally, since $A/G$ is solvable, the proof of \cite[Lemma~7.2]{Fen19} also applies to establish the condition (ii.d) of Proposition~\ref{prop:iBAWC-}, and thus we complete the proof.
\end{proof}	

We are now able to complete the verification of Conjecture \ref{conj:Inductive Alperin bound} for $2$-blocks of maximal defect of finite quasi-simple groups.

\begin{thm}\label{thm:inductive alperin bound for p=2, max defect}
Conjecture \ref{conj:Inductive Alperin bound} holds for all $2$-blocks of maximal defect of every covering group of any non-abelian finite simple group.
\end{thm}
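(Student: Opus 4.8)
The plan is to verify Conjecture~\ref{conj:Inductive Alperin bound}, case by case, for the universal $2'$-covering group of each non-abelian finite simple group, upgrading to the present (blockwise) setting the constructions of \cite{Mal-Nav-Tie23}, where the block-free analogue --- the Sylow-AWC condition --- was established for every quasi-simple group at the prime $2$. The criterion of Proposition~\ref{prop:iBAWC-} reduces each case to producing a single suitably equivariant injection carrying a block-compatibility.

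\emph{Reduction to universal $2'$-covering groups.} Given a covering group $G$ of a simple group $S$, one first reduces to $\o_2(G)=1$, i.e.\ to $\Z(G)$ a $2'$-group: this follows by arguing as in the proof of \cite[Lemma~6.3 and Corollary~6.4]{MRR} (exactly as at the beginning of the proof of Theorem~\ref{thm:Reduction}), using that passing to the quotient by the central $2$-subgroup $\o_2(G)$ carries $2$-blocks of maximal defect to $2$-blocks of maximal defect and is compatible with block isomorphisms of character triples. When $\Z(G)$ is a $2'$-group, $G$ is a quotient of the universal $2'$-covering group $X$ of $S$ by a central $2'$-subgroup, so Lemma~\ref{lem:Isomorphic simple groups, lifting} together with Remark~\ref{rem:maximal defect} reduces us to verifying Conjecture~\ref{conj:Inductive Alperin bound} for every $2$-block of maximal defect of $X$ with respect to $X\unlhd X\rtimes\Aut(X)$. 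For such $X$ a maximal-defect block has a Sylow $2$-subgroup $P$ as defect group, so $\IBr(\N_X(P))$ is the disjoint union of the sets $\IBr(b)$ over the blocks $b$ of $\N_X(P)$ with $b^X$ of maximal defect; hence it is enough to produce an $\Aut(X)$-equivariant injection $\Omega\colon\IBr(\N_X(P))\hookrightarrow\IBr(X)$ sending each $\IBr(b)$ into $\IBr(b^X)$ and realising block isomorphisms of character triples, and then invoke Proposition~\ref{prop:iBAWC-} with $M=\N_X(P)$.

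\emph{Groups with a unique maximal-defect block.} When the principal block is the only $2$-block of $S$ of maximal defect --- which is the case whenever a Sylow $2$-subgroup of $S$ is self-normalising, and also for every group of Lie type in characteristic $2$ of positive Lie rank (where the remaining blocks have defect zero) --- the maximal-defect $2$-blocks of $X$ are parametrised by $\IBr(\Z(X))$ exactly as in Lemma~\ref{lem:case2}, and one concludes by arguing as in Lemma~\ref{lem:case1} (when $\Z(X)=1$) or Lemma~\ref{lem:case2} (when $\Out(X)$ acts regularly on $\Z(X)\setminus\{1\}$), the finitely many remaining configurations --- which can be enumerated from the known lists of simple groups with self-normalising Sylow $2$-subgroup and with non-trivial $2'$-part of the Schur multiplier --- being settled directly via Proposition~\ref{prop:iBAWC-} and the cyclic-outer-automorphism remark following it. For the universal $2'$-covering groups of alternating and of sporadic groups one then proceeds directly: $\Omega$ is written down from the combinatorics of $2$-cores and weights (Nakayama's conjecture; see \cite{JK}) in the alternating case and from \cite{GAP} in the sporadic case, and the hypotheses of Proposition~\ref{prop:iBAWC-} are checked.

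\emph{Groups of Lie type in odd characteristic, and the main obstacle.} This is the heart of the proof. Here $\Omega$ and the underlying central isomorphisms of character triples are those constructed in \cite{Mal-Nav-Tie23}: on a non-principal maximal-defect block $B_\lambda$ the value $\Omega(\lambda)$ is the restriction to $2$-regular elements of the semisimple character $\pm\mathrm{R}^{\bG}_{\bL}(\lambda)$ attached to a semisimple $2'$-element (after a regular embedding $\bG\hookrightarrow\bH$ when $\Z(\bG)$ is disconnected), exactly as in the proof of Proposition~\ref{prop:iBAW-E6}, which settles $\ty{E}_6(q)$ and ${}^2\ty{E}_6(q)$; the classical groups and the remaining exceptional types are treated by the same strategy. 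The content to be supplied is block-theoretic: first, $\Omega(\lambda)\in\IBr(B_\lambda)$, obtained by identifying --- via \cite{Bon-Dat-Rou17} and \cite{Cab-Eng04}, and \cite{Kos-Spa15} to restate it as a statement on induced blocks --- the block of $\pm\mathrm{R}^{\bG}_{\bL}(\lambda)$ with the Brauer correspondent of the block of $\N_X(P)$ containing $\lambda$; and second, condition (ii.d) of Proposition~\ref{prop:iBAWC-}, which is automatic when $\Aut(X)_B/X$ is cyclic and otherwise follows, $A/X$ being solvable, from the argument of \cite[Lemma~7.2]{Fen19} (compare \cite{Spa13I}). I expect the main obstacle to be carrying out this block-theoretic refinement uniformly across all classical and exceptional groups --- identifying the block of each Deligne--Lusztig-type Brauer character through Jordan decomposition of blocks while keeping $\Omega$ equivariant --- which forces a careful analysis of the action of diagonal and field automorphisms on the set of maximal-defect blocks, equivalently on the semisimple $2'$-elements labelling them, and of the Clifford theory of $\mathrm{R}^{\bG}_{\bL}(\lambda)$ around the regular embedding.
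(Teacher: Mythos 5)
Your overall structure (reduce to the universal $2'$-covering group, invoke Proposition~\ref{prop:iBAWC-}, treat self-normalising-Sylow cases via Lemmas~\ref{lem:case1}/\ref{lem:case2}, and use the semisimple-character construction from \cite{Mal-Nav-Tie23} for the genuinely new Lie-type cases) matches the paper's plan. The genuine divergence --- and the place where your proposal would be far more costly than the actual argument --- is that you do not exploit Remark~\ref{rmk:BAWC implies iBound}~(ii): the inductive blockwise Alperin weight condition is \emph{stronger} than Conjecture~\ref{conj:Inductive Alperin bound}, so wherever the inductive BAW condition is already established in the literature there is nothing left to construct. The paper uses this systematically: \cite{Mal14} for alternating groups, \cite{Bre-web} for the few sporadic groups not caught by Lemmas~\ref{lem:case1}/\ref{lem:case2} (namely $J_1$, $J_2$, $J_3$, $Suz$, $HN$), \cite[Thm.~C]{Spa13I} for defining characteristic, \cite{Fen-Li-Zha-21,Fen-Li-Zha23} for $\PSL_n(q)$ and $\PSU_n(q)$, \cite[Thm.~1]{Fen-Mal22} for $\PSp_{2n}(q)$, and \cite[Prop.~6.4]{Spa13I} for ${}^2\ty G_2(q^2)$. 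In your sketch you instead propose to write down $\Omega$ from the $2$-core combinatorics for alternating groups, from \cite{GAP} for sporadics, and via Deligne--Lusztig induction \emph{for all} classical and exceptional groups of Lie type in odd characteristic --- the last of which you yourself flag as the main obstacle. That obstacle simply does not arise in the paper: combining Kondrat'ev's classification \cite[Cor.]{Kon-05} of simple groups with self-normalising Sylow $2$-subgroups with Lemma~\ref{lem:case1} shows that, in odd characteristic, the only families with more than one maximal-defect $2$-block are $\PSL_n$, $\PSU_n$, $\PSp_{2n}$, ${}^2\ty G_2$, and $\ty E_6/{}^2\ty E_6$; the first four are covered by the BAW literature, so the only genuinely new Deligne--Lusztig computation required is the $\ty E_6/{}^2\ty E_6$ case of Proposition~\ref{prop:iBAW-E6}. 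Your approach, while correct in spirit, replaces one targeted argument for a single family by a uniform block-theoretic analysis across all types, much of which is both unnecessary and, for types such as $\ty B$, $\ty D$, $\ty F_4$, $\ty E_7$, $\ty E_8$ where the Sylow $2$-subgroup is self-normalising, moot.

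A secondary point: for defining-characteristic groups you observe that the principal block is the unique maximal-defect block and all other blocks have defect zero, and you propose to conclude from there. That alone is not enough --- one still needs to establish the block isomorphism of modular character triples for the principal block, and a Sylow $p$-subgroup of a group of Lie type in defining characteristic $p$ is \emph{not} self-normalising in the simple group (its normaliser is a Borel), so Lemma~\ref{lem:case1} does not apply. The paper sidesteps this by citing \cite[Thm.~C]{Spa13I}, which verifies the full inductive BAW condition in defining characteristic.
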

	
\begin{proof}
Let $S$ be a non-abelian finite simple group, and $G$ be a covering group of $S$. 
By \cite[Prop.~2.4]{Mur98} and \cite[Lemma 3.14 and 3.15]{MRR}, it suffices to consider the case that $G$ is the universal $2'$-covering group of $S$.

We first consider the sporadic simple groups. Thanks to Lemma~\ref{lem:case1}, Lemma~\ref{lem:case2}, \cite[Table~5.3]{GLS} and \cite[Cor.]{Kon-05}, it remains to deal with $J_1$, $J_2$, $J_3$, $Suz$ and $HN$.
These groups have been checked to satisfy the inductive BAW condition in \cite{Bre-web}.

The inductive BAW condition was verified in \cite{Mal14} to hold for all simple alternating groups.
Thus we are left to deal with the simple groups $S$ of Lie type. 
Also the defining characteristic case was settled in \cite[Thm.~C]{Spa13I}, whence it suffices to consider groups of Lie type in odd characteristic.

Thanks to Proposition~\ref{prop:iBAW-E6}, we assume that $S$ is not of type $\ty E_6$ or ${}^2\ty E_6$.
By \cite{Fen-Li-Zha-21,Fen-Li-Zha23}, simple groups $\textup{PSL}_n(q)$ and $\textup{PSU}_n(q)$ satisfy the inductive BAW condition.
For the group $G=3.\ty G_2(3)$ or $3.\Omega_7(3)$ we are in the case dealt with in Lemma~\ref{lem:case2}.
Thus all exceptional covering groups (see, e.g., \cite[Table~6.1.3]{GLS}) are settled.
By \cite[Cor.]{Kon-05} and Lemma~\ref{lem:case1}, it remains to consider the following two cases: $S={}^2\ty G_2(q^2)$ or $\textup{PSp}_{2n}(q)$, which have been checked to satisfy the inductive BAW condition by \cite[Prop.~6.4]{Spa13I} and \cite[Thm.~1]{Fen-Mal22} respectively. 
\end{proof}

\section{Proofs of Theorem \ref{thm:Main, p=2} and Theorem \ref{thm:Main, blockwise lower bound p=2, max}}

%\begin{proof}[Proof of Corollary \ref{cor:Main, normal Sylow}]
%Let $P$ be a Sylow $2$-subgroup of $G$. Clearly, if $P$ is normal in $G$ then we have $|\ibr(G)|=|\ibr(\n_G(P))|$. Suppose now that $|\ibr(G)|=|\ibr(\n_G(P))|$. By Theorem \ref{thm:Main Reduction normal Sylow} it suffices to show that Conjecture \ref{conj:Inductive Alperin bound, blockfree} holds for all quasi-simple groups with respect to the prime $2$. This has been verified in \cite{Mal-Nav-Tie23}.
%\end{proof}

Using the reductions contained in Theorem \ref{thm:Reduction} and Theorem \ref{thm:proof of main, normal sylow}, together with the verification of Conjecture \ref{conj:Inductive Alperin bound} for $2$-blocks of maximal defect of finite quasi-simple groups obtained in Theorem \ref{thm:inductive alperin bound for p=2, max defect}, we are finally able to prove Theorem \ref{thm:Main, p=2} and Theorem \ref{thm:Main, blockwise lower bound p=2, max}.

\begin{proof}[Proof of Theorem \ref{thm:Main, p=2}]
Let $P$ be a Sylow $2$-subgroup of $G$. Clearly, if $P$ is normal in $G$ then we have $|\ibr(G)|=|\ibr(\n_G(P))|$. Suppose now that $|\ibr(G)|=|\ibr(\n_G(P))|$. By Theorem \ref{thm:proof of main, normal sylow} it suffices to show that Conjecture \ref{conj:Inductive Alperin bound} holds for all blocks of maximal defect of all quasi-simple groups with respect to the prime $2$. This is done in Theorem \ref{thm:inductive alperin bound for p=2, max defect}.
\end{proof}

\begin{proof}[Proof of Theorem \ref{thm:Main, blockwise lower bound p=2, max}]
By Theorem \ref{thm:Main reduction} and Remark \ref{rem:maximal defect} it suffices to prove Conjecture \ref{conj:Inductive Alperin bound} for the $2$-blocks of maximal defect of the covering groups of the nonabelian finite simple groups, but this is done in Theorem \ref{thm:inductive alperin bound for p=2, max defect}.
\end{proof}

%%%%%%%%%%%%%%%%%%%%%%%%%%%%%%%%%%%%%%%%%%%%%%%%%%%%%%%%%%%%%%%%%%%%%%%%%%%%%%%%%%%%%%%%%%%%%%%%%%%%%%%%%%%%%%%%%%%%%%%%%%%%%%
\section*{Acknowledgements}
Part of this work was done during the visit of the first-named author at FB Mathematik, RPTU Kaiserslautern--Landau, and he would like to thank Gunter Malle and this department for the kind hospitality. The second-named author is grateful to Gabriel Navarro for many insightful conversations about Theorem \ref{thm:Main Reduction normal Sylow} as well as for inspiring the proof of Proposition \ref{pro:glauberman}, and also to Eugenio Giannelli and Mandi Schaeffer Fry for conversations on blocks of finite simple groups.
Part of this project was inspired by a question raised by Lucas Ruhstorfer while the second-named author visited the Bergische Universit\"at Wuppertal in July 2024. He wishes to thank him for conversations on the topic and the whole Darstellungstheorie group for their kind hospitality. The authors wish to thank Gunter Malle and Noelia Rizo for suggestions that helped improve the exposition of this manuscript.

%%%%%%%%%%%%%%%%%%%%%%%%%%%%%%%%%%%%%%%%%%%%%%%%%%%%%%%%%%%%%%%%%%%%%%%%%%%%%%%%%%%%%%%%%%%%%%%%%%%%%%%%%%%%%%%%%%%%%%%%%%%%%%

\bibliographystyle{alpha}

\vspace{1cm}

(Z. Feng) {\sc Shenzhen International Center for Mathematics and Department of Mathematics, Southern University of Science and Technology, Shenzhen 518055, China}

\textit{Email address:} \href{mailto:fengzc@sustech.edu.cn}{fengzc@sustech.edu.cn}

(J. M. Mart\'inez) {\sc{Departament de Matem\`atiques, Universitat de Val\`encia, 46100 Burjassot, Val\`encia, Spain.}}

\textit{Email address:} \href{mailto:josep.m.martinez@uv.es}{josep.m.martinez@uv.es}

(D. Rossi) {\sc{FB Mathematik, RPTU Kaiserslautern--Landau, Postfach 3049, 67653 Kaiserslautern, Germany}}

\textit{Email address:} \href{mailto:damiano.rossi.math@gmail.com}{damiano.rossi.math@gmail.com}

\end{document}